\newcommand{\Z}{\mathbf{Z}}
\newcommand{\R}{\mathbf{R}}
\newcommand{\A}{\mathbf{A}}
\renewcommand{\P}{\mathbf{P}}
\newcommand{\As}{\mathscr{A}}
\newcommand{\Bs}{\mathscr{B}}
\newcommand{\Cs}{\mathscr{C}}
\newcommand{\Ds}{\mathscr{D}}
\newcommand{\Es}{\mathscr{E}}
\newcommand{\Fs}{\mathscr{F}}
\newcommand{\Gs}{\mathscr{G}}
\newcommand{\Hs}{\mathscr{H}}
\newcommand{\Js}{\mathscr{J}}
\newcommand{\Os}{\mathscr{O}}
\newcommand{\Vs}{\mathscr{V}}
\newcommand{\Xs}{\mathscr{X}}
\newcommand{\Mc}{\mathcal{M}}
\newcommand{\Rc}{\mathcal{R}}
\newcommand{\bRc}{\boldsymbol{\mathcal{R}}}
\newcommand{\Xk}{\mathfrak{X}}
\newcommand{\E}[2]{\mathbf{A}^{#1,\mathrm{an}}_{#2}}
\newcommand\ti[1]{\tilde{#1}}
\newcommand\wti[1]{\widetilde{#1}}
\newcommand{\ho}{\hat{\otimes}}
\newcommand{\an}{\textrm{an}}
\newcommand{\cn}[2]{[\![ {#1},{#2}]\!]}
\newcommand\wc{{\mkern 2mu\cdot\mkern 2mu}}
\DeclareMathOperator\rk{rk}
\theoremstyle{plain}
\newtheorem{theorem}{Theorem}[subsection]
\newtheorem{lemma}[theorem]{Lemma}
\newtheorem{proposition}[theorem]{Proposition}
\newtheorem{corollary}[theorem]{Corollary}
\theoremstyle{definition}
\newtheorem{definition}[theorem]{Definition}
\newtheorem{notation}[theorem]{Notation}
\newtheorem{setting}{Setting}[section]
\theoremstyle{remark}
\newtheorem{remark}[theorem]{Remark}
\numberwithin{equation}{subsection}
\begin{document}

\title[The convergence Newton polygon on curves]{The convergence Newton polygon\\ of a $p$-adic differential equation~II:\\ Continuity and finiteness on Berkovich curves}
\author{J\'er\^ome Poineau}
\email{jerome.poineau@unicaen.fr}
\address{Laboratoire de math\'ematiques Nicolas Oresme, Université de Caen, BP 5186, F-14032 Caen Cedex}

\author{Andrea Pulita}
\email{pulita@math.univ-montp2.fr}
\address{D\'epartement de math\'ematiques, Universit\'e de Montpellier II, CC051, Place Eug\`ene Bataillon, F-34095 Montpellier Cedex 5}
\date{\today}

\subjclass{12H25 (primary), 14G22 (secondary)}
\keywords{$p$-adic differential equations, Berkovich spaces, radius of convergence, Newton polygon, continuity, finiteness, universal points, extension of scalars}

\begin{abstract}
We study the variation of the convergence Newton polygon of a differential equation along a smooth Berkovich curve over a non-archimedean complete valued field of characteristic~0. Relying on work of the second author who investigated its properties on affinoid domains of the affine line, we prove that its slopes give rise to continuous functions that factorise by the retraction through a locally finite subgraph of the curve. 
\end{abstract}

\maketitle

\section{Introduction}

In the $p$-adic setting, linear differential equations exhibit some features that do not appear over the complex numbers. For instance, even if the coefficients of such an equation are entire functions, it may fail to have a solution that converges everywhere. This leads to a non-trivial notion of radius of convergence, which has been actively investigated and found several applications. In this article, we study the variation of the radius of convergence (or more accurately of the slopes of the convergence Newton polygon, which also contains more refined radii) of a module with connection over a curve and prove that it enjoys continuity and finiteness properties.

The precise setting is the following. Let~$K$ be a non-archimedean complete valued field of characteristic 0. Let~$X$ be a quasi-smooth $K$-analytic curve, in the sense of Berkovich theory. Let~$\Fs$ be a locally free $\Os_{X}$-module of finite type endowed with an integrable connection~$\nabla$.

If~$X$ can be embedded into the affine line~$\A^{1,\an}_{K}$, we have a coordinate on~$X$ that we may use to define the radius of convergence. This study has been carried out by the second author in~\cite{finiteness}, to which this article is a follow-up.

Let us now return to the case of a general quasi-smooth curve~$X$. Let~$x$ be a $K$-rational point of~$X$. By the implicit function theorem, in the neighbourhood of the point~$x$, the curve is isomorphic to a disc, and we may consider the radius of the biggest disc on which a given horizontal section of~$(\Fs,\nabla)$ (\textit{i.e.} a given solution of the associated differential equation) converges. Considering the radii associated to the various horizontal sections, we define a tuple~$\bRc_{S}(x)$ that we call multiradius of convergence at~$x$. (It depends on an additional datum~$S$ on the curve, to which we shall come back later.) For readers who are familiar with the notion of convergence Newton polygon, let us mention that we recover here the tuple of its slopes, up to a logarithm. Since any point of the curve may be changed into a rational one by a suitable extension of the scalars, we may actually extend the definition of the multiradius of convergence to the whole curve. 

It is already interesting to consider the radius of convergence~$\Rc_{S,1}(x)$ at a point~$x$, \textit{i.e.} the radius of the biggest disc on which~$(\Fs,\nabla)$ is trivial, or, equivalently, the minimal radius that appears in the multiradius. In~\cite{ContinuityCurves}, F.~Baldassarri proved the continuity of this function. 

In the article~\cite{finiteness}, using different methods, the second author thoroughly investigated the multiradius of convergence~$\bRc_{S}$ in the case where~$X$ is an affinoid domain of the affine line. He proved that this function is continuous, piecewise $\log$-linear and satisfies a strong finiteness property: it factorises by the retraction through a finite subgraph of~$X$. 

In this paper, we prove that those properties extend to arbitrary quasi-smooth $K$-analytic curves:

{
\renewcommand{\thetheorem}{\ref{thm:continuousandfinite}}
\begin{theorem}
The map~$\bRc_{S}$ satisfies the following properties:
\begin{enumerate}[\it i)]
\item it is continuous;
\item its restriction to any locally finite graph~$\Gamma$ is piecewise $\log$-linear;
\item on any interval~$J$, the $\log$-slopes of its restriction are rational numbers of the form~$m/j$ with $m\in\Z$ and $j\in\cn{1}{r}$, where~$r$ is the rank of~$\Fs$ around~$J$; 
\item there exists a locally finite subgraph~$\Gamma$ of~$X$ and a continuous retraction $r\colon X \to \Gamma$ such that the map~$\bRc_{S}$ factorises by~$r$.
\end{enumerate}
\end{theorem}
\addtocounter{theorem}{-1}
}

This theorem lays the foundations of a deeper study of $p$-adic differential equations on curves. In subsequent work, we will use it in a crucial way to prove decomposition theorems (see~\cite{finiteness3}) and finiteness results for the de Rham cohomology of modules with connections (see~\cite{finiteness4}). We mention that F.~Baldassarri and K.~Kedlaya have announced similar results (see~\cite{Kedlayalocalglobal}).

\medbreak

Let us now give an overview of the contents of the article. We have just explained the rough idea underlying the definition of the multiradius of convergence on the curve~$X$. Obviously, some work remains to be done in order to put together the different local normalisations in a coherent way and give a proper definition of this multiradius. This will be the content of our first section. The first definition was actually given by F.~Baldassarri in~\cite{ContinuityCurves}, using a semistable formal model of the curve, which led to some restrictions. Here, we will use A.~Ducros's notion of triangulation, which is a way of cutting a curve into pieces that are isomorphic to virtual discs or annuli. (The existence of triangulations is very closely related to the semistable reduction theorem.) Among other advantages, it lets us deal with arbitrary quasi-smooth curve (regardless of the fact that they are compact or not, strictly analytic or not, over a field that is trivially valued or not) and enables us to define everything within the realm of analytic spaces. The symbol~$S$ that appeared in our notation~$\bRc_{S}$ for the multiradius above actually referred to the choice of such a triangulation. We will explain how our radius of convergence relates to that of~\cite{ContinuityCurves} and to the usual one in the case of analytic domains of the affine line, as defined in~\cite{ContinuityBDV} or~\cite{finiteness}, for instance. 

As indicated above, extension of scalars plays a crucial role in the theory. We devote \S\ref{sec:extensionscalars} to it and give a precise description of the fibres of the extension maps in the case of curves over algebraically closed fields: they consist of one universal point together with a bunch of virtual open discs that spread out of it. We believe those results to be of independent interest.

Finally, \S\ref{sec:result} is devoted to the proof of the continuity, $\log$-linearity and finiteness property of the multiradius of convergence~$\bRc_{S}$. The basic idea is to present the curve~$X$ locally as a finite \'etale cover of an affinoid domain~$W$ of the affine line, apply the results of~\cite{finiteness} to~$W$ and pull them back. We will first present the geometric results we need (which essentially come from A.~Ducros's manuscript~\cite{RSSen}) to find a nice presentation of the curve, then prove a weak version of the finiteness property (local constancy of~$\bRc_{S}$ outside a locally finite subgraph) and finally the continuity (which will yield the stronger finiteness property) and piecewise $\log$-linearity.

\begin{setting}\label{setting}
For the rest of the article, we fix the following: $K$~is a complete valued field of characteristic~0, $p$~is the characteristic exponent of its residue field~$\ti K$ (either~1 or a prime number), $X$~is a quasi-smooth $K$-analytic curve\footnote{Quasi-smooth means that $\Omega_{X}$ is locally free of rank~1, see~\cite[3.1.11]{RSSen}. This corresponds to the notion called ``rig-smooth'' in the rigid analytic terminology.}, $\Fs$~is a locally free $\Os_{X}$-module of finite type endowed with an integrable connection~$\nabla$. We fix an algebraic closure~$\bar{K}$ of~$K$ and denote its completion by~$K^a$.

From \S\ref{sec:radius} on, we will assume that the curve~$X$ is endowed with a weak triangulation~$S$. 

From \S\ref{sec:geometry} on, we will assume that the field~$K$ is algebraically closed.
\end{setting}

\section{Definitions}

In this section, we define the radius of convergence of~$(\Fs,\nabla)$ at any point of the curve~$X$. To achieve this task, we will need to understand precisely the geometry of~$X$. Our main tool will be A.~Ducros's notion of triangulation (see~\cite[5.1.13]{RSSen}) that we recall here. The original definition, introduced by F.~Baldassarri in~\cite{ContinuityCurves}, in a more restrictive setting, made use of a semistable model of the curve. The two points of view are actually very close (see \S\ref{sec:Baldassarri}).

Let us point out that the structure of analytic curves has been completely described by V.~Berkovich thanks to the semistable reduction theorem (see~\cite[Chapter~4]{rouge}). In the book~\cite{RSSen}, A.~Ducros recently managed to get the same results working only on the analytic side. As his text is a thorough manuscript with easily quotable results, we have chosen to use it systematically when we needed to refer to results on curves, even in the case when they were known before (as for bases of neighbourhoods of points, for instance).

\subsection{Triangulations}

First of all, let us fix notations for discs and annuli.

\begin{notation}
Let~$\E{1}{K}$ be the affine analytic line with coordinate~$t$. Let~$M$ be a complete valued extension of~$K$ and $c\in M$. For $R>0$, we set 
\[D_{M}^+(c,R) = \big\{x\in \E{1}{M}\, \big|\, |(t-c)(x)|\le R\big\}\] 
and  
\[D_{M}^-(c,R) = \big\{x\in \E{1}{M}\, \big|\, |(t-c)(x)|<R\big\}.\]
Denote by~$x_{c,R}$ the unique point of the Shilov boundary of~$D_{M}^+(c,R)$.

For $R_{1},R_{2}$ such that $0 < R_{1} \le R_{2}$, we set
\[C_{M}^+(c,R_{1},R_{2}) = \big\{x\in \E{1}{M}\, \big|\, R_{1}\le |(t-c)(x)|\le R_{2}\big\}.\] 
For $R_{1},R_{2}$ such that $0 < R_{1} < R_{2}$, we set
\[C_{M}^-(c,R_{1},R_{2}) = \big\{x\in \E{1}{M}\, \big|\, R_{1} < |(t-c)(x)| < R_{2}\big\}.\] 
We may suppress the index~$M$ when it is obvious from the context.
\end{notation}

Let us now recall that a non-empty connected $K$-analytic space is called a virtual disc (resp. annulus) if it becomes isomorphic to a union of discs (resp. annuli whose orientations are preserved by Gal($\bar{K}/K$)) over~$K^a$ (see~\cite[3.6.32 and 3.6.35]{RSSen}). 

The skeleton of a virtual open (resp. closed) annulus~$C$ is the set of points~$\Gamma_{C}$ that have no neighbourhoods isomorphic to a virtual disc. It is homeomorphic to an open (resp. closed) interval and the space~$C$ retracts continuously onto it.

\begin{definition}
A subset~$S$ of~$X$ is said to be a weak triangulation of~$X$ if 
\begin{enumerate}
\item $S$ is locally finite and only contains points of type~2 or~3;
\item any connected component of~$X\setminus S$ is a virtual open disc or annulus.
\end{enumerate}

The union of~$S$ and of the skeletons of the connected components of $X\setminus S$ that are virtual annuli forms a locally finite graph, which is called the skeleton~$\Gamma_{S}$ of the weak triangulation~$S$.
\end{definition}

\begin{remark}
The fact that the skeleton~$\Gamma_{S}$ is a locally finite graph is not obvious from the definition. It follows for instance from Theorem~\ref{thm:bonvois} below.
\end{remark}

Usually the skeleton is the union of the segments between the points of~$S$ but beware that peculiar situations may sometimes appear: for example, the skeleton of an open annulus endowed with the empty weak triangulation is an open segment.

Let us also point out that the skeleton of an open disc endowed with the empty weak triangulation is empty, which sometimes forces to handle this case separately.

\begin{remark}\label{rem:retraction}
Let~$Z$ be a connected component of~$X$ such that $Z\cap \Gamma_{S} \ne \emptyset$ (which is always the case except if~$Z$ is an open disc such that~$Z\cap S =\emptyset$). Then we have a natural continuous topologically proper retraction $Z \to Z\cap \Gamma_{S}$. It is the identity on~$Z \cap \Gamma_{S}$ and sends a point of $Z \setminus \Gamma_{S}$ to the unique boundary point of the connected component of $Z \setminus \Gamma_{S}$ (a virtual open disk by definition) containing it.
\end{remark}

\begin{remark}
The definition of a triangulation that is used by A.~Ducros is actually stronger than ours since he requires the connected components of~$X\setminus S$ to be relatively compact. For example, the empty weak triangulation of the open disc is not allowed. 

This property allows him to have a natural continuous retraction $X \to \Gamma_{S}$ and also to associate to~$S$ a nice formal model of the curve~$X$ (see~\cite[\S6.3]{RSSen}, for details). 
\end{remark}

The existence of a triangulation is one of the main results of A.~Ducros's manuscript (see~\cite[Th\'eor\`eme~5.1.14]{RSSen}). It is essentially equivalent to the semistable reduction theorem (see~\cite[\S4]{Triangulations} and~\cite[Chapitre~6]{RSSen}).

\begin{theorem}[(A.~Ducros)]
Any quasi-smooth $K$-analytic curve admits a triangulation, hence a weak triangulation.
\end{theorem}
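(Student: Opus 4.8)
The plan is to prove the existence of a triangulation in the strong sense (with relatively compact complementary components); a weak triangulation then follows \emph{a fortiori}. I would organize the argument around three inputs: the local structure of the curve around a single point, the classical semistable reduction theorem in the compact case, and a globalization by exhaustion.

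\emph{Local input.} Using the classification of points of a quasi-smooth curve into types~1--4: a point of type~1 or~4 has a basis of neighbourhoods that are virtual open discs, and a point of type~3 has a basis of neighbourhoods that are virtual open annuli containing the point on their skeleton; in either case the empty triangulation already works. For a point~$x$ of type~2 one invokes the residue curve: a well-chosen affinoid neighbourhood of~$x$ carries a reduction map onto an affine smooth $\ti K$-curve~$\Cs_{x}$ sending~$x$ to the generic point, the branches emanating from~$x$ corresponding to the closed points of the smooth completion of~$\Cs_{x}$; all but finitely many of these lead into virtual open discs, and after shrinking the neighbourhood the remaining finitely many become virtual open annuli. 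Hence~$x$ has a neighbourhood triangulated by~$\{x\}$.

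\emph{Compact case.} If~$X$ is compact, I would appeal to semistable reduction: after a finite extension of~$K$ to reduce to the split situation, a suitable formal model~$\Xk$ of~$X$ has semistable special fibre~$\Xk_{s}$, and the finite set~$S$ of points of~$X$ reducing to a generic point of~$\Xk_{s}$ is a triangulation — the preimage of a smooth point is a virtual open disc, that of a node a virtual open annulus (this is essentially Baldassarri's original construction, now descended back to~$K$ using Galois-equivariance). If~$X$ is an affinoid domain with nonempty boundary one adds to~$S$ the finitely many boundary points, which are of type~2 or~3.

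\emph{Globalization.} One reduces first to~$X$ connected, since connected components are open and so the union of their triangulations stays locally finite. In the $\sigma$-compact case, choose an exhaustion $X=\bigcup_{n}X_{n}$ by compact analytic domains with $X_{n}\subseteq\mathrm{int}(X_{n+1})$ and build an increasing sequence of finite triangulations~$S_{n}$ of the~$X_{n}$, at each stage refining the disc/annulus decomposition already fixed on~$X_{n-1}$ — here one uses that cutting a virtual disc or annulus at finitely many points of type~2 or~3 produces only virtual discs and annuli. Setting $S=\bigcup_{n}S_{n}$ yields a locally finite set (a point lies in $\mathrm{int}(X_{n})$ for~$n$ large, and a neighbourhood inside~$X_{n+1}$ meets only the finitely many points of~$S_{n+1}$) whose complementary components, being exhausted by the corresponding components of the~$X_{n}\setminus S_{n}$ and eventually stabilizing, are virtual discs or annuli. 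For a non-paracompact connected curve one replaces the exhaustion by a transfinite induction (or Zorn's lemma) over the partially ordered set of triangulated analytic subdomains.

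I expect the genuine obstacle to be precisely this last step: one must feed back the semistable data of larger and larger pieces in a \emph{compatible} way, so as never to produce an inadmissible component such as~$\A^{1}$, which a naive union of the local triangulations would not rule out; keeping the growing set~$S$ locally finite while simultaneously controlling all complementary components globally is the technical heart of the proof. A pervasive secondary point, present already in the compact case, is the descent from~$\widehat{\overline K}$ needed to upgrade ``geometrically a disc or annulus'' to ``virtual disc or annulus''.
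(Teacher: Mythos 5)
First, a point of calibration: the paper does not prove this statement at all --- it is quoted verbatim from A.~Ducros's manuscript \cite{RSSen} (th\'eor\`eme~4.1.14), so there is no in-paper argument to compare yours against. Judged on its own terms, your outline follows a different route from Ducros's: he does \emph{not} assume semistable reduction, but develops the local structure of curves at type-2 points via Temkin's theory of (graded) reductions of germs, proves the triangulation theorem directly, and then \emph{deduces} semistable reduction as a corollary. Your plan imports semistable reduction for the compact case, which is not circular (it has independent proofs), but it imports essentially all of the depth of the theorem in the compact setting. Your local analysis at a type-2 point via the residual curve is sound and is in fact close to what the paper itself uses in theorem~\ref{thm:bonvois}.

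The genuine gap is the globalization, and you have named it yourself without closing it. Concretely: (1) producing an exhaustion $X=\bigcup_n X_n$ with $X_n\subseteq \mathrm{int}(X_{n+1})$ presupposes that $X$ is countable at infinity, i.e.\ paracompact --- but paracompactness of arbitrary Berkovich curves is itself a nontrivial theorem of Ducros, proved \emph{using} the local structure theory, so it cannot be taken as given; the fallback ``transfinite induction / Zorn's lemma over triangulated subdomains'' does not obviously work, because the union of a chain of compatibly triangulated subdomains need not be triangulated (local finiteness of $S$ and admissibility of the limit components can both fail). (2) Even in the $\sigma$-compact case, you must show that $S_{n+1}$ can be chosen to contain $S_n$ while the complementary components stabilize; an increasing union of virtual open discs need not be a virtual open disc (it can be $\A^{1,\mathrm{an}}$ or an annulus), and ruling this out requires an actual argument --- e.g.\ that each component of $X\setminus S$ eventually coincides with a component of $X_n\setminus S_n$ for $n$ large, which is exactly the compatibility you defer. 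A proof proposal that identifies the hardest step as ``the technical heart'' and leaves it unexecuted has not proved the theorem; as it stands, your text establishes the compact case (modulo semistable reduction and the descent of ``geometric'' to ``virtual'', which you also only gesture at) but not the general one.
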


\subsection{Extension of scalars}\label{sec:extensionscalars}

In this section, we study the effect of extending the scalars on a given weak triangulation. 

\begin{notation}
For every complete valued extension~$L$ of~$K$, we set $X_{L} = X \ho_{K} L$. 

For every complete valued extension~$L$ of~$K$, and every complete valued extension~$M$ of~$L$, we denote by $\pi_{M/L} \colon X_{M} \to X_{L}$ the canonical projection morphism. We set $\pi_{M} = \pi_{M/K}$. 
\end{notation}

Starting from the triangulation~$S$ of~$X$, it is easy to check that the preimage $S_{K^a} = \pi^{-1}_{K^a}(S)$ is a weak triangulation of~$X_{K^a}$.   

To go further, let us introduce the notion of universal point of~$X$: roughly speaking, it is a point that may be canonically lifted to any base field extension~$X_{L}$ of~$X$. We refer to~\cite[\S5.2]{rouge}\footnote{In this reference, where universal points first appeared, they are called ``peaked points''.} and~\cite{Angie} for more information.

\begin{definition}\label{defi:universal}
A point~$x$ of~$X$ is said to be universal if, for any complete valued extension~$L$ of~$K$, the tensor norm on the algebra $\Hs(x)\ho_{K} L$ is multiplicative. In this case, it defines a point of~$X_{L}$ that we denote by~$x_{L}$.
\end{definition}

\begin{lemma}\label{lem:universalmorphism}
Let $\varphi \colon Y \to Z$ be a morphism of $K$-analytic spaces. Let~$y\in Y$ be a universal point. Then~$\varphi(y)$ is universal.

Let~$L$ be a complete valued extension of~$K$ and denote by $\varphi_{L} \colon Y_{L} \to Z_{L}$ the morphism obtained after extension of scalars. We have $\varphi(y)_{L} = \varphi_{L}(y_{L})$. 
\end{lemma}
\begin{proof}
For every complete valued extension~$L$ of~$K$, we have embeddings
\[\Hs(\varphi(y))\ho_{K} L \hookrightarrow \Hs(y)\ho_{K} L \hookrightarrow \Hs(y_{L}).\]
The first map is isometric by~\cite[Lemme~3.1]{Angie}, for instance, and the second by the definition of a universal point. We deduce that the tensor norm on $\Hs(\varphi(y))\ho_{K} L$ is multiplicative. Moreover, we get an isometric embedding $\Hs(\varphi(y)_{L}) \hookrightarrow \Hs(y_{L})$, which shows that $\varphi_{L}(y_{L}) =\varphi(y)_{L}$.
\end{proof}

In some cases, the universality condition is not restrictive:

\begin{theorem}[\protect{\cite[Corollaire~3.14]{Angie}}]\label{thm:universal}
Over an algebraically closed field, every point is universal. 
\end{theorem}

Theorem~\ref{thm:universal} gives a way to lift the given weak triangulation~$S$ of~$X$ to a weak triangulation~$S_{L}$ of~$X_{L}$, for any complete valued extension~$L$ of~$K^a$.

\begin{notation}
Let~$L$ be a complete valued extension of~$K$. We denote by $\mathrm{Gal}^c(L/K)$ the group of isometric automorphisms of~$L$ that induce the identity on~$K$.
\end{notation}

\begin{lemma}\label{lem:Galoisuniversal}
Let~$L$ be a complete valued extension of~$K^a$. Let~$x \in X_{K^a}$ and $\sigma\in \mathrm{Gal}^c(L/K)$. Then, we have
\[\sigma(x)_{L} = \sigma(x_{L}).\]
\end{lemma}
\begin{proof}
The point~$\sigma(x_{L})$ lies over~$\sigma(x)$ in~$X_{K^a}$, hence it belongs to $\Mc(\Hs(\sigma(x)) \ho_{K^a} L)$. By definition of a universal point, as semi-norms on $\Hs(\sigma(x)) \ho_{K^a} L$, we have $\sigma(x_{L}) \le \sigma(x)_{L}$.

The reverse inequality follows from the fact that, as semi-norms on $\Hs(x) \ho_{K^a} L$, we have $\sigma^{-1}(\sigma(x)_{L}) \le x_{L}$, because~$x_{L}$ is universal.

%We may assume that~$X$ is affinoid. Let us denote by~$\As$ its algebra. By definition of the action of~$\sigma$ on~$X$, the diagram
%\[\xymatrix{
%\As \ho_{K} K^a \ar[r]^{\ \chi_{x}} & \Hs(x) \\
%\As \ho_{K} K^a \ar[u]^{\id \otimes \sigma^{-1}} \ar[ru]_{\chi_{\sigma(x)}}
%}\]
%is commutative. Base-changing to~$L$, we find
%\[\xymatrix{
%\As \ho_{K} L \ar[r]^{\chi_{x} \otimes L\quad } & \Hs(x)\ho_{K^a} L \\
%\As \ho_{K} L \ar[u]^{\id \otimes \sigma^{-1}} \ar[ru]_{\chi_{\sigma(x)} \otimes L}
%}.\]
%Since~$K^a$ is algebraically closed, the point~$x$ is universal and the tensor semi-norm on~$\Hs(x)\ho_{K^a} L$ is an absolute value. Composed with this absolute value, the maps $\chi_{x} \otimes L$ and $\chi_{\sigma(x)} \otimes L$ give rise to the points~$x_{L}$ and~$\sigma(x)_{L}$. The result follows.
\end{proof}

\begin{definition}
Let~$L$ be a complete valued extension of~$K$. Let~$L^a$ be the completion of an algebraic closure of~$L$, which we see as an extension of~$K^a$. Set
\[S_{L^a} = \{x_{L^a} \mid x\in S_{K^a}\},\ \Gamma_{S_{L^a}} =\{x_{L^a} \mid x\in \Gamma_{S_{K^a}}\}\]
and
\[S_{L} =  \pi_{L^a/L}(S_{L^a}),\ \Gamma_{S_{L}} =  \pi_{L^a/L}(\Gamma_{S_{L^a}}).\]
\end{definition}

By Lemma~\ref{lem:Galoisuniversal}, the sets~$S_{L^a}$ and~$\Gamma_{S_{L^a}}$ are invariant under the action of $\mathrm{Gal}^c(L^a/K)$. We deduce that the sets of the previous definition are well-defined and independent of the choices. We write down the following property for future reference.

\begin{lemma}\label{lem:Sinvar}
Let~$L$ be a complete valued extension of~$K$. The sets~$S_{L}$ and~$\Gamma_{S_{L}}$ are invariant under the action of $\mathrm{Gal}^c(L/K)$.
\end{lemma}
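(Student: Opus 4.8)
The strategy is to lift a given $\sigma \in \mathrm{Gal}^{c}(L/K)$ to an automorphism of~$M$, exploit up there the functoriality of the canonical lifting of universal points, and then push the resulting invariance back down to~$X_{L}$ through the projection~$\pi_{M/L}$. Concretely, I would fix $\sigma \in \mathrm{Gal}^{c}(L/K)$. By the Dwork--Robba lemma~\ref{lem:GaloisDR}, it extends to a continuous (hence isometric) automorphism~$\ti\sigma$ of~$M$. Since $\ti\sigma$ restricts to~$\sigma$ on~$L \supseteq K$, it induces the identity on~$K$; being a continuous field automorphism that fixes~$K$, it stabilises~$K^{a}$ and induces there an element $\sigma_{0} \in \mathrm{Gal}^{c}(K^{a}/K)$.

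As usual, I also write $\ti\sigma$, $\sigma_{0}$, $\sigma$ for the induced homeomorphisms of the analytic spaces~$X_{M}$, $X_{K^{a}}$, $X_{L}$. These satisfy the compatibilities $\pi_{M/L} \circ \ti\sigma = \sigma \circ \pi_{M/L}$, $\pi_{M/K^{a}} \circ \ti\sigma = \sigma_{0} \circ \pi_{M/K^{a}}$, and, because $\sigma_{0}$ induces the identity on~$K$, $\pi_{K^{a}} \circ \sigma_{0} = \pi_{K^{a}}$. The last identity already gives that $S_{K^{a}} = \pi_{K^{a}}^{-1}(S)$ is $\sigma_{0}$-invariant; and $\Gamma_{S_{K^{a}}}$ is $\sigma_{0}$-invariant too, since $\sigma_{0}$ permutes the connected components of $X_{K^{a}} \setminus S_{K^{a}}$, preserves the property of being a virtual annulus (the absolute value of~$K^{a}$ being preserved by~$\sigma_{0}$), and sends skeletons to skeletons.

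The heart of the matter is the functoriality of the canonical lifting: for every universal point~$x$ of~$X_{K^{a}}$, one has $\ti\sigma(x_{M}) = (\sigma_{0}(x))_{M}$. I would deduce this directly from the definition of~$x_{M}$, since the pair $(\sigma_{0}, \ti\sigma)$ induces an isometric, $\ti\sigma$-semilinear isomorphism $\Hs(x) \ho_{K^{a}} M \xrightarrow{\sim} \Hs(\sigma_{0}(x)) \ho_{K^{a}} M$ that carries the tensor seminorm to the tensor seminorm, hence the multiplicative seminorm defining~$x_{M}$ to the one defining~$(\sigma_{0}(x))_{M}$ (this is in the spirit of lemma~\ref{lem:Galoisuniversal}; compare also~\cite{Angieen}). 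Granting this, and using that every point of~$X_{K^{a}}$ is universal, we obtain
\[ \ti\sigma\bigl(\{x_{M} : x \in S_{K^{a}}\}\bigr) = \{(\sigma_{0}(x))_{M} : x \in S_{K^{a}}\} = \{y_{M} : y \in \sigma_{0}(S_{K^{a}})\} = \{y_{M} : y \in S_{K^{a}}\}, \]
and similarly with~$\Gamma_{S_{K^{a}}}$ in place of~$S_{K^{a}}$. Applying~$\pi_{M/L}$ and using $\pi_{M/L} \circ \ti\sigma = \sigma \circ \pi_{M/L}$ then yields $\sigma(S_{L}) = S_{L}$ and $\sigma(\Gamma_{S_{L}}) = \Gamma_{S_{L}}$, as wanted.

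The step I expect to be the main obstacle is the functoriality identity $\ti\sigma(x_{M}) = (\sigma_{0}(x))_{M}$: although it is morally evident, making it precise requires unwinding the construction of the canonical lift of a universal point and checking its compatibility with semilinear automorphisms of the base field. Everything else reduces to formal diagram chases with the base-change projections; one should also note in passing that a continuous automorphism fixing the (non-trivially) valued field~$K$ is automatically isometric, so that the induced maps on analytic spaces are genuine homeomorphisms respecting the structure up to the semilinear twist.
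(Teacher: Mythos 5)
Your argument is correct and is exactly the proof the paper intends but leaves implicit: extend $\sigma$ to $\ti\sigma$ on $M$ via lemma~\ref{lem:GaloisDR}, use the $\mathrm{Gal}^c$-equivariance of the canonical lift of universal points (the semilinear refinement of lemma~\ref{lem:Galoisuniversal}, which likewise follows from definition~\ref{defi:universal}) together with the obvious invariance of $S_{K^a}=\pi_{K^a}^{-1}(S)$ and its skeleton, and push down through $\pi_{M/L}$. No gaps; your identified key step $\ti\sigma(x_M)=(\sigma_0(x))_M$ is indeed the only point requiring an argument, and your sketch of it is sound.
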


To extend the weak triangulation to the field~$L$, the key point is the following result.

\begin{theorem}\label{thm:structurefibre}
Let~$x$ be a point of~$X_{K^a}$ and let~$L$ be a complete valued extension of~$K^a$. 

\begin{enumerate}
\item If~$x$ has type~$i\in\{1,2\}$, then~$x_{L}$ has type~$i$. If~$x$ has type~$j\in\{3,4\}$, then~$x_{L}$ has type~$j$ or~2.
\item The fibre $\pi_{L/K^a}^{-1}(x)$ is connected and the connected components of~$\pi_{L/K^a}^{-1}(x) \setminus\{x_{L}\}$ are virtual open discs with boundary~$\{x_{L}\}$. Moreover they are open in~$X_{L}$.
\end{enumerate}
\end{theorem}

Before proving the theorem, we will state two consequences.

\begin{corollary}
For any complete valued extension~$L$ of~$K$, the set~$S_{L}$ is a weak triangulation of~$X_{L}$ whose skeleton is~$\Gamma_{S_{L}}$.
\end{corollary}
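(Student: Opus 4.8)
The plan is to combine A.~Ducros's theorem on the fibres of the scalar‑extension maps with a Galois descent, treating first the extensions that contain~$K^{a}$. So suppose $L \supseteq K^{a}$ and fix $M$ as in the definition of~$S_{L}$. Every point~$x$ of $X_{K^{a}}$ is universal, so the canonical lift $x_{L} \in X_{L}$ is defined; transitivity of the canonical lift gives $\pi_{M/L}(x_{M}) = x_{L}$, whence $S_{L} = \{x_{L} \mid x \in S_{K^{a}}\}$ and likewise $\Gamma_{S_{L}} = \{y_{L} \mid y \in \Gamma_{S_{K^{a}}}\}$. I would then decompose $X_{L} = \pi_{L/K^{a}}^{-1}(X_{K^{a}})$ along the partition $X_{K^{a}} = S_{K^{a}} \sqcup \bigsqcup_{C} C$ of $X_{K^{a}}$ into $S_{K^{a}}$ and the connected components of $X_{K^{a}}\setminus S_{K^{a}}$. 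For $x \in S_{K^{a}}$ (a point of type~2 or~3), Ducros's theorem says $\pi_{L/K^{a}}^{-1}(x) \setminus \{x_{L}\}$ is a disjoint union of virtual open discs, and $x_{L}$ is again of type~2 or~3. For a component~$C$ (a virtual open disc or annulus), the preimage $\pi_{L/K^{a}}^{-1}(C)$ is the base change $C \times_{K^{a}} L$, again a virtual open disc or annulus of the same type, with skeleton $\{y_{L} \mid y \in \Gamma_{C}\}$ when $C$ is an annulus. These pieces, together with $S_{L} = \{x_{L} \mid x \in S_{K^{a}}\}$, partition~$X_{L}$.

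It remains to see that the virtual open discs occurring in the fibres $\pi_{L/K^{a}}^{-1}(x)\setminus\{x_{L}\}$ and the subsets $\pi_{L/K^{a}}^{-1}(C)$ are exactly the connected components of $X_{L} \setminus S_{L}$. Each of them is open in~$X_{L}$ and connected, so it suffices to show it is closed in $X_{L}\setminus S_{L}$, i.e.\ that its boundary in~$X_{L}$ — one point for a virtual open disc, at most two for a virtual open annulus — lies in~$S_{L}$. Such a boundary point~$b$ maps under $\pi_{L/K^{a}}$ into $\partial C$, respectively into $\{x\}$, hence into~$S_{K^{a}}$, so $b \in \pi_{L/K^{a}}^{-1}(S_{K^{a}}) = S_{L} \sqcup (\text{the disc fibres})$; and $b$ cannot lie inside one of the virtual open discs~$D$ of those fibres, for then $D$ would be an open neighbourhood of~$b$ meeting, hence meeting in a nonempty open set, a piece disjoint from~$D$, which is absurd. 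Thus $b \in S_{L}$, and every connected component of $X_{L}\setminus S_{L}$ is a virtual open disc or annulus. Local finiteness of~$S_{L}$ and the type~2/3 condition follow immediately (pull a neighbourhood of $x \in S_{K^{a}}$ meeting $S_{K^{a}}$ finitely back along $\pi_{L/K^{a}}$, and use the openness of the remaining pieces away from~$S_{L}$), and the union of the skeletons of the annulus components is $\bigcup_{C \text{ annulus}} \{y_{L} \mid y \in \Gamma_{C}\} = \{y_{L} \mid y \in \Gamma_{S_{K^{a}}}\} = \Gamma_{S_{L}}$.

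For an arbitrary complete valued extension $L/K$, I would let~$L'$ be the completion of the compositum $LK^{a}$ formed inside~$M$. Then $L' \supseteq K^{a}$ and $L'/L$ is algebraic, so the case already treated shows that $S_{L'}$ is a weak triangulation of $X_{L'}$ with skeleton~$\Gamma_{S_{L'}}$, and by lemma~\ref{lem:Sinvar} both are stable under $G := \mathrm{Gal}^{c}(L'/L)$. Since $\pi_{L'/L}$ realises $X_{L}$ as the quotient $X_{L'}/G$ (see~\cite{RSSen}), one gets $\pi_{L'/L}^{-1}(S_{L}) = G \cdot S_{L'} = S_{L'}$, so the connected components of $X_{L}\setminus S_{L}$ are the $\pi_{L'/L}$‑images of those of $X_{L'}\setminus S_{L'}$; pulling back further to $\widehat{\overline{L}} \supseteq L'$, where such an image becomes a disjoint union of open discs or of open annuli, one sees that it is a virtual open disc or annulus, and that its skeleton is the image of the skeleton upstairs. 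Hence $S_{L}$ is a weak triangulation of $X_{L}$, and its skeleton is $\pi_{L'/L}(\Gamma_{S_{L'}}) = \Gamma_{S_{L}}$.

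I expect the main obstacle to be the closure argument of the second paragraph: it relies on knowing the local topology of quasi‑smooth curves precisely enough — that a virtual open disc or annulus is cut out by its finite boundary, and that the pieces of the partition cannot interpenetrate — to guarantee that deleting~$S_{L}$ disconnects~$X_{L}$ exactly along them. The remaining ingredients that need care, although they belong to the geometric toolkit of~\cite{RSSen}, are the preservation of the type~2/3 condition under the canonical lift $x \mapsto x_{L}$, the compatibility of base change with discs, annuli and their skeletons, and the description of $\pi_{L'/L}$ as a quotient map used in the descent.
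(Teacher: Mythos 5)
The paper gives no proof of this corollary, presenting it as an immediate consequence of Ducros's theorem on the fibres of $\pi_{L/K^a}$, and your argument --- decompose $X_L\setminus S_L$ into the virtual open discs of the fibres over $S_{K^a}$ and the base changes of the components of $X_{K^a}\setminus S_{K^a}$, then descend from $\widehat{LK^a}$ to $L$ via the Galois action and the quotient description of $\pi_{L'/L}$ --- is exactly the intended deduction and is correct modulo the standard facts from Ducros's manuscript that you explicitly flag. One simplification: each piece of your partition is open in $X_L$ (openness of $\pi_{L/K^a}^{-1}(x)\setminus\{x_L\}$ is part of the statement of Ducros's theorem) and connected, so it is automatically both open and closed in $X_L\setminus S_L$ and hence a connected component; the boundary/closure argument you single out as the main obstacle is therefore not needed.
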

\begin{proof}
It is enough to prove that~$S_{L^a}$ is a weak triangulation of~$X_{L^a}$ whose skeleton is~$\Gamma_{S_{L^a}}$. By Theorem~\ref{thm:structurefibre} (i), the set~$S_{L^a}$ contains only points of type~2 or~3. The projection~$S_{K^a}$ of~$S_{L^a}$ on~$X_{K^a}$ is locally finite and there is exactly one point of~$S_{L^a}$ above any point of~$S_{K^a}$. We deduce that~$S_{L^a}$ is locally finite.

Let~$x \in S_{K^a}$. Let~$C$ be a connected component of $\pi^{-1}_{L^a/K^a}(x)\setminus\{x_{L^a}\}$. By Theorem~\ref{thm:structurefibre} (ii), it is a disc and it is open in~$X_{L^a}$. Moreover, since $\pi^{-1}_{L^a/K^a}(x)$ is closed in~$X_{L^a}$, $C$ is also closed in~$X_{L^a}\setminus S_{L^a}$. We deduce that the disc~$C$ is a connected component of~$X_{L^a}\setminus S_{L^a}$.

The complement in~$X_{L^a}\setminus S_{L^a}$ of all the connected components of the preceding form is the complement of~$\pi^{-1}_{L^a/K^a}(S_{K^a})$ in~$X_{L^a}$, \textit{i.e.} the union of all sets of the form~$\pi^{-1}_{L^a/K^a}(O)$, where~$O$ is a connected component of~$X_{K^a} \setminus S_{K^a}$. We deduce that the latter are the remaining connected components of~$X_{L^a}\setminus S_{L^a}$.

Let~$O$ be a connected component of~$X_{K^a} \setminus S_{K^a}$. If it is an open disc, then~$\pi^{-1}_{L^a/K^a}(O)$ is an open disc too. If it is an open annulus, then~$\pi^{-1}_{L^a/K^a}(O)$ is an open annulus too and a direct computation shows that its skeleton is the set $\{x_{L^a}, x\in \Gamma_{O}\}$, where~$\Gamma_{O}$ denotes the skeleton of~$O$. The result follows.
\end{proof}

We will now be more precise about the open discs that appear in the fibres of the extension of scalars and show that they are actually isomorphic. In order to do so, we will need results about maximally complete valued fields, for which we refer to~\cite{Poonenmax}.

\begin{theorem}\label{thm:GaloisPoonen}
Let~$M$ be a non-archimedean valued field that is algebraically closed and maximally complete. Let~$L$ be a subfield of~$M$. Let~$L'$ be a valued extension of~$L$ such that there exist a morphism $f \colon \wti{L'} \hookrightarrow \wti M$ between the residue fields and an injective morphism $g \colon |L'^\ast| \hookrightarrow |M^\ast|$ between the value groups that make the following diagrams commute:
\[\xymatrix{
& \wti{L'} \ar[d]^f\\
\wti L \ar[r] \ar[ru] &\wti M
}
\qquad\qquad
\xymatrix{
& |L'^\ast| \ar[d]^g\\
|L^\ast| \ar[r] \ar[ru] & |M^\ast|
}.
\]
Then, there exists an isometric embedding of valued fields $h \colon L' \hookrightarrow M$ that induces~$f$ and~$g$ and makes the following diagram commute:
\[\xymatrix{
& L' \ar[d]^h\\
L \ar[r] \ar[ru] &M
}.\]
%Let~$M$ be a non-archimedean valued field that is algebraically closed and maximally complete. Let~$L$ be a subfield of~$M$. Let~$L'$ be a valued extension of~$L$ such that there exist a morphism $f \colon \wti{L'} \hookrightarrow \wti M$ between the residue fields that preserves~$\wti L$ and an injective morphism $g \colon |L'^\ast| \hookrightarrow |M^\ast|$ between the value groups that preserves~$|L^\ast|$. Then, there exists an isometric embedding of valued fields $L' \hookrightarrow M$ preserving~$L$ and inducing~$f$ and~$g$.
\end{theorem}
\begin{proof}
By~\cite[Corollary~5]{Poonenmax}, the field~$M$ may be embedded in a Mal'cev-Neumann field~$M'$ with same value group and residue field. Since~$M$ is maximally complete, the immediate extension~$M'/M$ is an isomorphism. We deduce that~$M$ is a Mal'cev-Neumann field. (Conversely, any Mal'cev-Neumann field with divisible value group and algebraically closed residue field is algebraically closed and maximally complete by~\cite[Theorem~1 and Corollary~4]{Poonenmax}.)

The result now follows from~\cite[Theorem~2]{Poonenmax}.
\end{proof}

\begin{corollary}\label{cor:GaloisPoonen}
Let~$L$ be a valued extension of~$K$ such that the residual extension $\wti L/\wti K$ has finite transcendence degree. Let~$M$ be a valued extension of~$K$ that is  algebraically closed and maximally complete. Let $a,b \colon L \hookrightarrow M$ be two $K$-embeddings of~$L$ into~$M$. Then there exists an isometric $K$-automorphism~$\sigma$ of~$M$ such that $b = \sigma \circ a$. 
\end{corollary}
\begin{proof}
By~\cite[\S 14, $n^\circ$~4, Corollaire~2]{Bourbakialg47}, there exists an automorphism~$s$ of the residue field~$\wti M$ such that $\wti b = \wti s \circ \wti a$. By Theorem~\ref{thm:GaloisPoonen}, there exists an isometric endomorphism~$\sigma$ of~$M$ that induces~$s$ on~$\wti M$ , the identity on~$|M^\ast|$ and makes the following diagram commute:
\[\xymatrix{
& M \ar[d]^\sigma\\
L \ar[r]^b \ar[ru]^a &M
}.\]
In particular, the extension defined by~$\sigma$ is immediate. As~$M$ is maximally complete, $\sigma$~is an isomorphism.

%By~\cite[\S 14, $n^\circ$~4, Corollaire~2]{Bourbakialg47}, there exists an automorphism~$s$ of the residue field~$\wti M$ such that $\wti b = \wti s \circ \wti a$. By Theorem~\ref{thm:GaloisPoonen}, there exists an isometric endomorphism~$\sigma$ of~$M$ preserving~$L$ that induces~$s$ on~$\wti M$ and the identity on~$|M^\ast|$. In particular, the extension defined by~$\sigma$ is immediate. As~$M$ is maximally complete, $\sigma$~is an isomorphism.
\end{proof}

%\begin{corollary}\label{cor:GaloisPoonen}
%Let~$L$ be a valued field. Let $a,b \colon L \hookrightarrow M$ be two embeddings of~$L$ into an algebraically closed and maximally complete valued field~$M$. Then there exists an isometric automorphism~$\sigma$ of~$M$ such that $b = \sigma \circ a$. 
%\end{corollary}
%\begin{proof}
%The result holds for fields without valuations, hence there exists an automorphism~$s$ of the residue field~$\wti M$ such that $\wti b = \wti s \circ \wti a$. By the previous Corollary~\ref{cor:GaloisPoonen}, there exists an isometric endomorphism~$\sigma$ of~$M$ preserving~$L$ that induces~$s$ on~$\wti M$ and the identity on~$|M^\ast|$. In particular, the extension defined by~$\sigma$ is immediate. As~$M$ is maximally complete, $\sigma$~is an isomorphism.
%\end{proof}

\begin{remark}
The result does not hold anymore if~$M$ is not assumed to be maximally complete as proved in~\cite{MatignonReversat} answering a question from~\cite{DworkRobbaGalois}.
\end{remark}

\begin{corollary}\label{cor:Galoisdisc}
Let~$M$ be an algebraically closed and maximally complete valued extension of~$K$. Let~$x$ be a point of~$X$. The Galois group $\mathrm{Gal}^{c}(M/K)$ acts transitively on the set of $M$-rational points of~$\pi^{-1}_{M/K}(x)$ and on the set of connected components of $\pi^{-1}_{M/K}(x) \setminus \{x_{M}\}$.
\end{corollary}
\begin{proof}
We may assume that~$K$ is algebraically closed. Let~$t$ and~$t'$ be two $M$-rational points of~$\pi^{-1}_{M/K}(x)$. They correspond to two embeddings of valued fields $\Hs(x) \hookrightarrow M$. The residue field~$\wti{\Hs(x)}$ has transcendence degree at most~1 over~$\wti K$, hence, by Corollary~\ref{cor:GaloisPoonen}, the embeddings are conjugated. Stated otherwise, there exists $\sigma \in \mathrm{Gal}^c(M/K)$ that sends~$t$ to~$t'$. This proves the first result.

Let us now consider two connected components~$D$ and~$D'$ of $\pi^{-1}_{M/K}(x) \setminus \{x_{M}\}$. By Theorem~\ref{thm:structurefibre}, they are discs. Choose two $M$-rational points~$t$ and~$t'$ in~$D$ and~$D'$ respectively. We have just proven that there exists $\sigma \in \mathrm{Gal}^c(M/K)$ such that $\sigma(t)=t'$.

By Lemma~\ref{lem:Galoisuniversal}, since the point~$x$ is fixed by~$\sigma$, the point~$x_{M}$ is fixed too. We deduce that the connected component~$D$ of $\pi^{-1}_{M/K}(x) \setminus \{x_{M}\}$ is sent into a connected subset of $\pi^{-1}_{M/K}(x)$ that contains~$t'$ but not~$x_{M}$, hence into~$D'$. Similarly, we prove that $\sigma^{-1}(D') \subseteq D$. It follows that~$\sigma$ induces an isomorphism between~$D$ and~$D'$.
\end{proof}

The rest of the section is dedicated to the proof of Theorem~\ref{thm:structurefibre}. We advise the reader who is mainly interested in the applications to differential equations to skip this part and carry on with \S\ref{sec:radius}. 

\medbreak

Let~$L^a$ be the completion of an algebraic closure of~$L$. The fibre $\pi^{-1}_{L^a/L}(x_{L})$ is finite. Since~$x$ is universal, it may be canonically lifted to~$L^a$ and the fibre is actually reduced to the point~$x_{L^a}$. We deduce that, for every connected component~$C$ of $\pi_{L/K^a}^{-1}(x)\setminus\{x_{L}\}$, the base change~$C \ho_{L} L^a$ is a union of connected components of $\pi_{L^a/K}^{-1}(x)\setminus\{x_{L^a}\}$. Recall that the projection map $X_{K^a} \to X$ is the quotient by the action of $\mathrm{Gal}(\bar{K}/K)$. In particular, it is open. Thus, in order to prove that every connected component of~$\pi_{L/K^a}^{-1}(x)\setminus\{x_{L}\}$ is a virtual open disc with boundary~$\{x_{L}\}$ that is open in~$X_{L}$, it is enough to show that every connected component of~$\pi_{L^a/K^a}^{-1}(x)\setminus\{x_{L^a}\}$ is an open disc with boundary point $\{x_{L^a}\}$ that is open in~$X_{L^a}$. From now on, we will assume that~$L$ is algebraically closed.

The proof of the theorem will be split in several steps. We first consider the case of points in the affine line.

\begin{lemma}\label{lem:structurefibredisc123}
Theorem~\ref{thm:structurefibre} holds if~$x$ belongs to the affine line.
\end{lemma}
\begin{proof}
If the point~$x$ has type~1, there is only one point above it and the result is obvious.

Assume that the point~$x$ has type~2 or~3. Then it is the unique point of the Shilov boundary of some closed disc~$D_{K^a}^+(c,R)$, with~$c\in K^a$ and~$R>0$. With the notations above, we have $x = x_{c,R}$.

Here, all the computations can be made explicitly and we check that the canonical lifting~$x_{L}$ of the point~$x$ is the point~$x_{c,R}$ in~$\A^{1,\an}_{L}$, \textit{i.e.} the unique point of the Shilov boundary of the disc~$D_{L}^+(c,R)$.

The point~$x_{L}$ has type~2 or~3, hence its complement in~$D_{L}^+(c,R)$ is a disjoint union of open discs of radius~$R$. Let~$D$ be such a disc. Assume that it is not contained in the fibre~$\pi_{L/K^a}^{-1}(x)$. Then it meets the preimage of a disc of the form $D' = D_{K^a}^-(d,R)$ with~$d \in K^a$, that is to say the disc $D_{L}^-(d,R)$. Two open discs inside an affine line that meet and have the same radius are equal, hence $D = D_{L}^-(d,R)$ and $D \cap \pi_{L/K^a}^{-1}(x) = \emptyset$. We conclude that $\pi_{L/K^a}^{-1}(x)\setminus\{x_{L}\}$ is the disjoint union of the open discs of radius~$R$ that are contained in it.

Let us finally assume that~$x$ has type~4. There exists a sequence of $K^a$-rational points $(c_{n})_{n\ge 0}$ and a sequence of positive real numbers $(R_{n})_{n\ge 0}$ such that the sequence of discs $(D_{K^a}^+(c_{n},R_{n}))_{n\ge 0}$ is decreasing with intersection~$\{x\}$. We deduce that 
\[\pi_{L/K^a}^{-1}(x) = \bigcap_{n\ge 0} D_{L}^+(c_{n},R_{n}).\]
There are two cases. Let us first assume that $\bigcap_{n\ge 0} D_{L}^+(c_{n},R_{n})$ contains no $L$-rational point. Then $\pi_{L/K^a}^{-1}(x)$ is a singleton~$\{x_{L}\}$ and~$x_{L}$ is again a point of type~4. 

Now, assume that $\bigcap_{n\ge 0} D_{L}^+(c_{n},R_{n})$ contains an $L$-rational point~$c$. Then $\pi_{L/K^a}^{-1}(x)$ is the disc~$D_{L}^+(c,R)$, with $R = \inf_{n\ge 0}(R_{n}) >0$. In this case, the point~$x_{L}$ is the unique point~$x_{c,R}$ of the Shilov boundary of~$D_{L}^+(c,R)$.

In both cases, the result of Theorem~\ref{thm:structurefibre} is obvious.
\end{proof} 

We will now turn to the case of points of arbitrary curves. Let us begin with the first part of Theorem~\ref{thm:structurefibre}. The following classical result, whose proof is based on Krasner's lemma (see~\cite[0.21]{polytopes} for instance), will be useful.

\begin{lemma}\label{lem:algebrisation}
There exists an affinoid neighbourhood~$V$ of~$x$ in~$X$ and a smooth affine algebraic curve~$\Xs$ over~$K$ such that~$V$ identifies to an affinoid domain of~$\Xs^\an$. \hfill \qed
\end{lemma}

\begin{lemma}\label{lem:typecurves}
If~$x$ has type~$i\in\{1,2\}$, then~$x_{L}$ has type~$i$. If~$x$ has type~$j\in\{3,4\}$, then~$x_{L}$ has type~$j$ or~2.
\end{lemma}
\begin{proof}
%We have seen that the result holds for points in the affine line and we will reduce to this case. If~$x$ has type~1 or~4, then, by~\cite[Th\'eor\`eme~4.4.14]{RSSen}, it admits a neighbourhood that is isomorphic to a disc and we may apply Lemma~\ref{lem:structurefibredisc123}. If~$x$ has type~3, then, by~\cite[Th\'eor\`eme~4.3.5]{RSSen}, it admits a neighbourhood that is isomorphic to an annulus and we may use Lemma~\ref{lem:structurefibredisc123} again.
%
%We may now assume that~$x$ has type~2. In this case, there exists a strictly $k$-affinoid domain~$X'$ of~$X$ containing~$x$. By Noether normalisation, there exists a finite morphism~$\varphi$ from~$X'$ to an affinoid domain~$Y'$ of the affine line. Let us consider the commutative diagram
%\[\xymatrix{
%X_{L}' \ar[r] \ar[d]^{\varphi_{L}} \ar[r]^{\pi_{L/K^a}} & X' \ar[d]^\varphi\\
%Y_{L}' \ar[r]^{\pi_{L/K^a}} & Y'\\
%}.\]
%By Lemma~\ref{lem:universalmorphism}, we have $\varphi_{L}(x_{L}) = \varphi(x)_{L}$. The result follows since the finite morphisms~$\varphi$ and~$\varphi_{L}$ preserve types.

We have seen that the result holds for points in the affine line and we will reduce to this case. By Lemma~\ref{lem:algebrisation}, we may assume that~$X$ is the analytification of an algebraic curve. Then, there exists an affinoid domain~$X'$ of~$X$ containing~$x$ and a finite morphism~$\varphi$ from~$X'$ to an affinoid domain~$Y'$ of the affine line. Let us consider the commutative diagram
\[\xymatrix{
X_{L}' \ar[r] \ar[d]^{\varphi_{L}} \ar[r]^{\pi_{L/K^a}} & X' \ar[d]^\varphi\\
Y_{L}' \ar[r]^{\pi_{L/K^a}} & Y'\\
}.\]
By Lemma~\ref{lem:universalmorphism}, we have $\varphi_{L}(x_{L}) = \varphi(x)_{L}$. The result follows since the finite morphisms~$\varphi$ and~$\varphi_{L}$ preserve types.
\end{proof}

The following lemma will be useful here and later.

\begin{lemma}\label{lem:degree}
Let $\varphi \colon Y \to Z$ be a finite morphism between quasi-smooth $K$-analytic curves. Let~$y$ be a point of~$Y$ such that the local ring~$\Os_{Y,y}$ is a field. Then the local ring~$\Os_{Z,\varphi(y)}$ is a field too and we have 
\[[\Os_{Y,y} \colon\Os_{Z,\varphi(y)}] = [\Hs(y) \colon \Hs(\varphi(y))].\] 
\end{lemma}
\begin{proof}
Since~$\Os_{Y,y}$ is a field, the point~$y$ is not rigid, hence the point~$\varphi(y)$ is not rigid either, and~$\Os_{Z,\varphi(y)}$ is a field too.

By~\cite[Theorem~2.3.3]{bleu} or~\cite[Th\'eor\`eme~4.2]{etudelocale}, the fields~$\Os_{Y,y}$ and~$\Os_{Z,\varphi(y)}$ are Henselian. Since~$K$ has characteristic~0, the extension $\Os_{Y,y}/\Os_{Z,\varphi(y)}$ is separable and the equality follows from the beginning of the proof of~\cite[Proposition~2.4.1]{bleu}.
\end{proof}

\begin{remark}
When the extension $\Os_{Y,y}/\Os_{Z,\varphi(y)}$ is not separable, the equality $[\Os_{Y,y} \colon\Os_{Z,\varphi(y)}] = [\Hs(y) \colon \Hs(\varphi(y))]$ may fail. We refer the reader to~\cite[5.3.4.2]{families} for a counterexample due to M.~Temkin. He shows that if~$K$ is a non-trivially valued and non-algebraically closed field of characteristic~$p$, then there exists a point $z\in \E{1}{K}$ such that $\Hs(z) = K^a$. Let us consider the endomorphism~$\varphi$ of~$\E{1}{K}$ defined by $T \mapsto T^p$. The point~$z$ has precisely one preimage~$y$ and we have $[\Os_{y} \colon \Os_{z}] = p$ whereas $[\Hs(y) \colon \Hs(z)] = 1$.
\end{remark}

\begin{proposition}\label{prop:opendisc}
Every connected component of $\pi^{-1}_{L/K^a}(x)\setminus\{x_{L}\}$ is an open disc with boundary~$\{x_{L}\}$. In particular, $\pi^{-1}_{L/K^a}(x)$ is connected.
\end{proposition}
\begin{proof}
If~$x$ has type~1 or~4, then, by~\cite[Th\'eor\`eme~4.3.5]{RSSen}, it admits a neighbourhood that is isomorphic to a disc and we may apply Lemma~\ref{lem:structurefibredisc123}. If~$x$ has type~3, then, by~\cite[Th\'eor\`eme~4.3.5]{RSSen}, it admits a neighbourhood that is isomorphic to an annulus and we may use Lemma~\ref{lem:structurefibredisc123} again.

%Arguing as in the beginning of the proof of Lemma~\ref{lem:typecurves}, if~$x$ has type~1, 3 or~4, then it admits a neighbourhood that may be embedded into the affine line and the result follows from Lemma~\ref{lem:structurefibredisc123}.

Let us now assume that~$x$ has type~2. Since the result only depends on the field~$\Hs(x)$, by Lemma~\ref{lem:algebrisation}, we may assume that~$X$ is the analytification of a smooth algebraic curve over~$K^a$. In particular, $X$~has no boundary. 

Moreover, we deduce that the residue field~$\wti{\Hs(x)}$ is the function field of an irreductible and reduced algebraic curve~$\Cs_{x}$ over the algebraically closed field~$\wti{K^a}$. By generic smoothness, there exists a non-empty Zariski open subset of~$\Cs_{x}$ that admits an \'etale map to the affine line, hence there exists $\ti \alpha \in \wti{\Hs(x)}$ such that the extension $\wti{\Hs(x)}/\wti{K^a}(\ti \alpha)$ is finite and separable. Let us choose $\alpha\in \Os_{x}$ that lifts~$\ti\alpha \in \wti{\Hs(x)}$. It induces a morphism~$\varphi$ from a neighbourhood of~$x$ in~$X$ to $Y = \E{1}{K^a}$. By~\cite[Proposition~3.1.4]{bleu}, up to restricting~$X$ and~$Y$, we may assume that~$\varphi$ is finite and that $\varphi^{-1}(\varphi(x)) = \{x\}$. Since~$X$ and~$Y$ are quasi-smooth, the morphism~$\varphi$ is also flat. By construction, the extension $\wti{\Hs(x)}/\wti{\Hs(\varphi(x))}$ is finite and separable. Denote its degree by~$d$. Set $y=\varphi(x)$.

Since~$K^a$ is algebraically closed and~$x$ is of type~2, we have $|\Hs(x)^\ast| = |\Hs(y)^\ast| = |{K^a}^\ast|$. By~\cite[Corollary~6.3.6]{stablemodification} or~\cite[Th\'eor\`eme~4.3.14]{RSSen}, the field~$\Hs(y)$ is stable, hence
\[ [\Hs(x) \colon \Hs(y)] = [\wti{\Hs(x)} \colon \wti{\Hs(y)}] \, [|\Hs(x)^\ast| \colon |\Hs(y)^\ast|] = [\wti{\Hs(x)} \colon \wti{\Hs(y)}] = d.\]
Moreover, the local rings~$\Os_{x}$ and~$\Os_{y}$ are fields and, by Lemma~\ref{lem:degree}, we also have $[\Os_{x} \colon \Os_{y}] = d$. In particular, the morphism~$\varphi$ has degree~$d$.

Let~$\ti \beta \in \wti{\Hs(x)}$ such that $\wti{\Hs(y)}[\ti\beta] = \wti{\Hs(x)}$. Choose~$\beta\in \Os_{x}$ that lifts~$\ti\beta$. It is easy to check that we also have $\Os_{y}[\beta]=\Os_{x}$. Let $p(T) \in \Os_{y}[T]$ be the monic minimal polynomial of~$\beta$ over~$\Os_{y}$. It has degree~$d$ and its coefficients lie in~$\Hs(y)^\circ$. In particular, the image of~$p'(T)$ by the isomorphism $\Hs(y)[T]/(p(T)) \xrightarrow[]{\sim} \Hs(x)$ lies in~$\Hs(x)^\circ$. Since~$\ti\beta$ is separable, the reduction of~$p'(T)$ is non-zero, hence the image of~$p'(T)$ in~$\Hs(x)$ has absolute value~1.

%is actually invertible in~$\Hs(x)^\circ$.

%Let~$\ti \beta \in \wti{\Hs(x)}$ such that $\wti{\Hs(y)}[\ti\beta] = \wti{\Hs(x)}$. Choose~$\beta\in \Os_{x}$ that lifts~$\ti\beta$. It is easy to check that we also have $\Os_{y}[\beta]=\Os_{x}$. Let $p(T) \in \Os_{y}[T]$ be the monic minimal polynomial of~$\beta$ over~$\Os_{y}$. From now on, we make the identification $\Os_{x} = \Os_{y}[T]/(p(T))$. The polynomial~$p(T)$ has degree~$d$ and its coefficients lie in~$\Hs(y)^\circ$. Moreover, the reduction of~$p'(T)$ is non-zero, which amounts to the fact that the image of~$p'(T)$ in~$\Hs(x)^\circ$ is invertible.

Let~$Y'=\Mc(\Bs)$ be an affinoid domain of~$Y$ containing~$y$ such that the coefficients of~$p(T)$ belong to~$\Bs$. Set $\As = \Bs[T]/(p(T))$ and $X'=\Mc(\As)$. Let $\psi\colon X' \to Y'$ be the natural morphism. The preimage of~$y$ by~$\psi$ coincides with the spectrum of $\Hs(y)[T]/(p(T)) \simeq \Hs(x)$, hence it contains a single point and the completed residue field of the latter is isomorphic to~$\Hs(x)$. We still denote this point by~$x$.

Let us now consider the commutative diagram
\[\xymatrix{
X_{L}' \ar[r] \ar[d]^{\psi_{L}} \ar[r]^{\pi_{L/K^a}} & X' \ar[d]^\psi\\
Y_{L}' \ar[r]^{\pi_{L/K^a}} & Y'\\
}.\]
Since~$y$ is a point of type~2 of the affine line, the residue field~$\wti{\Hs(y)}$ is purely of transcendence degree~1, hence isomorphic to~$\wti{K^a}(u)$, where~$u$ is an indeterminate. By assumption, the reduction~$\wti{p}(T)$ of~$p(T)$ is irreducible over $\wti{\Hs(y)}$. Since~$\wti{K^a}$ is algebraically closed, by~\cite[Proposition~4.3.9]{EGAIV2}, the ring $\wti{K^a}(u) [T]/(\wti p(T))\otimes_{\wti{K^a}} \wti{L} = \wti L(u) [T]/(\wti p(T))$ is a domain, hence the polynomial~$\wti p(T)$ is still irreducible over $\wti L(u) \simeq \wti{\Hs(y_{L})}$. In particular, $p(T)$ is irreducible over~$\Hs(y_{L})$ and there is only one point in~$X'_{L}$ above~$y_{L}$, which can only be~$x_{L}$, by Lemma~\ref{lem:universalmorphism}. 

%Since~$y$ is a point of type~2 of the affine line, the residue field~$\wti{\Hs(y)}$ is purely of transcendence degree~1, hence isomorphic to~$\wti{K^a}(u)$, where~$u$ is an indeterminate. By assumption, the reduction~$\wti{p_{y}}(T)$ of~$p_{y}(T)$ is irreducible over $\wti{\Hs(y)}$. Since~$\wti{K^a}$ is algebraically closed, by~\cite[Proposition~4.3.9]{EGAIV2}, the ring $\wti{K^a}(u) [T]/(\wti{p_{y}}(T))\otimes_{\wti{K^a}} \wti{L} = \wti L(u) [T]/(\wti{p_{y}}(T))$ is a domain, hence the polynomial~$\wti{p_{y}}(T)$ is still irreducible over $\wti L(u) \simeq \wti{\Hs(y_{L})}$. In particular, $p_{y}(T)$ is irreducible over~$\Hs(y_{L})$ and there is only one point in~$X'_{L}$ above~$y_{L}$, which can only be~$x_{L}$, by Lemma~\ref{lem:universalmorphism}. 

%Since the point~$y$ has type~2, the residue field~$\wti{\Hs(y)}$ is purely of transcendence degree~1, hence isomorphic to~$\wti{K^a}(u)$, where~$u$ is an indeterminate. Let~$p_{y}(T)$ be the image of~$p(T)$ in~$\Hs(y)[T]$. By assumption, its reduction~$\ti p_{y}(T)$ is irreducible over $\wti{\Hs(y)}$. Since~$\wti{K^a}$ is algebraically closed, the polynomial~$\ti p_{y}(T)$ is still irreducible over $\ti L(u) \simeq \wti{\Hs(y_{L})}$. In particular, $p_{y}(T)$ is irreducible over~$\Hs(y_{L})$ and there is only one point in~$X'_{L}$ above~$y_{L}$, which can only be~$x_{L}$, by Lemma~\ref{lem:universalmorphism}. 

We have just shown that $\varphi_{L}^{-1}(y_{L})=\{x_{L}\}$, hence it is enough to prove that the preimage of every connected component of $\pi^{-1}_{L/K^a}(y)\setminus\{y_{L}\}$ is an open disc whose boundary contains~$x_{L}$.

By Lemma~\ref{lem:structurefibredisc123}, the connected components of~$\pi^{-1}_{L/K^a}(y)\setminus\{y_{L}\}$ are open discs. Let~$D$ be one of them and choose a coordinate~$S$ on it. Let~$z \in D(L)$. Let~$p_{z}(T)$ be the image of~$p(T)$ in~$\Hs(z)[T] \simeq L[T]$. There are exactly~$d$ points $z_{1},\dotsc,z_{d}$ in~$X'_{L}$ above~$z$, which correspond to the zeroes of~$p_{z}(T)$ in~$L$. Since every~$z_{i}$ lies over~$x$, the image of~$p_{z}'(T)$ in~$\Hs(z_{i})$ has absolute value~1. We deduce that the reduction $\wti{p_{z}}(T)$ is separable over~$\wti{\Hs(z)}$, hence the reductions $\ti z_{1},\dotsc, \ti z_{d} \in \wti L$ are distinct. Moreover, for every $i \in \cn{1}{d}$, we have $\wti{p_{z}}'(\ti z_{i}) \ne 0$.

%The assumptions we have made imply that they even correspond to the zeroes of its reduction $\wti{p_{z}}(T)$ in~$\wti L$.

%By Lemma~\ref{lem:structurefibredisc123}, the connected components of~$\pi^{-1}_{L/K^a}(y)\setminus\{y_{L}\}$ are open discs. Let~$D$ be one of them and choose a coordinate~$S$ on it. Let~$z \in D(L)$. Let~$p_{z}(T)$ be the image of~$p(T)$ in~$\Hs(z)[T] \simeq L[T]$. There are exactly~$d$ points in~$X'_{L}$ above~$z$, which correspond to the zeroes of~$p_{z}(T)$ in~$L$. The assumptions we have made imply that they even correspond to the zeroes of its reduction $\wti{p_{z}}(T)$ in~$\wti L$.

We may consider~$p(T)$ as a polynomial with coefficients in~$\Os(D)$, and even $\Os(D)^\circ = L^\circ\llbracket S\rrbracket$. By Henselianity, the zeroes $\ti z_{1},\dotsc, \ti z_{d} \in \wti L$ lift to~$d$ elements of $L^\circ\llbracket S\rrbracket$, hence giving~$d$ sections of~$\varphi_{L}$ over~$D$. Of course, they correspond to the zeroes of~$p(T)$ in~$\Os(D)$.

For every point~$z'$ of~$D$ of type~2, 3 or~4, the ring~$\Os(D)$ embeds into~$\Hs(z')$, hence the~$d$ sections are distinct above~$z'$. For every point~$z'$ of~$D$ of type~1, by redoing the previous argument with~$z'$ instead of~$z$, we prove that the sections are also distinct above~$z'$. We deduce that the sections are disjoint everywhere, hence~$\varphi_{L}^{-1}(D)$ is a disjoint union of~$d$ connected components $C_{1},\dotsc,C_{d}$. 

Let $i \in \cn{1}{d}$. The map~$\varphi_{L}$ induces a map $C_{i} \to D$ that is a finite morphism of degree~1, hence an isomorphism. This proves the first part of the result. Moreover, $X_{L}'$~is compact and~$C_{i}$ is not, hence the boundary~$B_{i}$ of~$C_{i}$ is non-empty. The boundary of~$D$ in~$Y'_{L}$ is~$\{y_{L}\}$. For every neighbourhood~$U$ of~$y_{L}$ in~$Y'_{L}$, $B_{i}$ belongs to~$\psi^{-1}(U)$. We deduce that $\psi(B_{i})=\{y_{L}\}$, hence $B_{i} = \{x_{L}\}$, which concludes the proof.

%Moreover, their boundaries contain a point over~$y_{L}$, which must be~$x_{L}$. The result follows.

%For every point~$z'$ of~$D$ of type~2, 3 or~4, the ring~$\Os(D)$ embeds into~$\Hs(z')$, hence the~$d$ sections are distinct above~$z'$. For every point~$z'$ of~$D$ of type~1, by redoing the previous argument with~$z'$ instead of~$z$, we prove that the sections are also distinct above~$z'$. We deduce that the sections are disjoint everywhere, hence~$\varphi_{L}^{-1}(D)$ is a disjoint union of~$d$ open discs. Moreover, their boundaries contain a point over~$y_{L}$, which must be~$x_{L}$. The result follows.

%We may consider~$p(T)$ as a polynomial with coefficients in~$\Os(D)$, and even $\Os(D)^\circ = L^\circ\llbracket S\rrbracket$. By Henselianity, the previous zeroes lift to~$d$ elements of $L^\circ\llbracket S\rrbracket$, hence giving~$d$ sections of~$\varphi_{L}$ over~$D$. This proves that~$\varphi_{L}^{-1}(D)$ is a disjoint union of~$d$ open discs. Moreover, their boundaries contain a point over~$y_{L}$, which must be~$x_{L}$. The result follows.
\end{proof}

We still have to prove that the connected components of $\pi^{-1}_{L/K^a}(x)\setminus\{x_{L}\}$, which are isomorphic to open discs, are open in~$X_{L}$. This is a general fact that is related to the analytic version of Zariski's Main Theorem (see~\cite[Proposition~3.1.4]{bleu} for morphisms with no boundary, which is enough for our needs, or~\cite[Th\'eor\`eme~3.2]{variationdimension} in general).

\begin{lemma}\label{lem:ccopen}
Let $\varphi \colon Y \to Z$ be a morphism between quasi-smooth $K$-analytic curves. Assume that~$Y$ has no boundary and that~$\varphi$ is injective. Then $\varphi$ is open.
\end{lemma}
\begin{proof}
Let~$y\in Y$. By~\cite[Proposition~3.1.4]{bleu}, there exist affinoid neighbourhoods~$V$ and~$W$ of~$y$ and~$\varphi(y)$ such that~$\varphi$ induces a finite morphism $\psi \colon V \to W$. Since~$\psi$ is a finite map between quasi-smooth curves, it is flat, hence open by~\cite[Proposition~3.2.7]{bleu}.

%Let~$y\in Y$. By the analytic version of Zariski's Main Theorem (see~\cite[Th\'eor\`eme~3.2]{variationdimension}), there exists an affinoid neighbourhood~$V$ of~$x$ such that the restriction of~$\varphi$ to~$V$ admits a factorisation
%\[V \xrightarrow[]{v} W \xrightarrow[]{w} T \xrightarrow[]{t} Z,\]
%where~$v$ is finite, $w$~is the embedding of an affinoid domain and $t$~is \'etale.
%
%Since~$Z$ is a quasi-smooth curve, so are~$T$ and~$W$. As~$v$ is a finite map between quasi-smooth curves, it is flat, hence open by~\cite[Proposition~3.2.7]{bleu}.
%
%By~\cite[Corollary~2.5.13]{rouge}, since $V \to W$ is a finite morphism, its interior is empty, hence $v(\Int(V)) \subseteq \Int(W)$ by~\cite[Proposition~2.5.8~(iii)]{rouge}. Moreover, by~\cite[Proposition~3.1.3~(i) and~(ii)]{rouge}, $\Int(W/T)$ coincides with the topological interior of~$W$ in~$T$ and contains~$\Int(W)$. We deduce that~$w\circ v$ is open at~$y$.
%
%Finally, the map~$t$ is \'etale hence open and we conclude that~$\varphi$ is open at~$y$.
\end{proof}

%To conclude the proof of Theorem~\ref{thm:structurefibre}, we still have to show that $\pi^{-1}_{L/K^a}(x)$ is connected. It follows easily from the fact that the boundary of every connected component of $\pi^{-1}_{L/K^a}(x)\setminus\{x_{L}\}$ is equal to~$\{x_{L}\}$.

%To conclude the proof of Theorem~\ref{thm:structurefibre}, we still have to show that the boundary of every connected component of $\pi^{-1}_{L/K^a}(x)\setminus\{x_{L}\}$ is equal to~$\{x_{L}\}$ and that $\pi^{-1}_{L/K^a}(x)$ is connected. The second statement follows from the first, hence we will concentrate on the boundary issue.
%
%Let~$C$ be a connected component of $\pi^{-1}_{L/K^a}(x)\setminus\{x_{L}\}$. Its boundary is contained in $\pi^{-1}_{L/K^a}(x)$, since it is closed, and cannot meet any other connected component of $\pi^{-1}_{L/K^a}(x)\setminus\{x_{L}\}$, since they are all open. By Proposition~\ref{prop:opendisc}, the boundary of~$C$ contains~$x_{L}$, hence is equal to~$\{x_{L}\}$.

\begin{remark}\label{rem:previousproof}
In a previous version of this article, a different strategy was used to prove Proposition~\ref{prop:opendisc}. It involved reduction techniques that were very close to the ones used in the proof of Theorem~\ref{thm:universal} in~\cite{Angie}. In the case of points of type~2, we were able to prove that there exists an affinoid domain~$V$ of~$X_{L}$ containing~$\pi^{-1}_{L/K^a}(x)$ such that the point~$x_{L}$ is sent by the reduction map $V \to \wti{V}$ to a generic point whereas every other point of~$\pi^{-1}_{L/K^a}(x)$ is sent to a smooth point. We could then conclude by a result of S.~Bosch that ensures that the preimage of a smooth point by the reduction map is an open disc (see \cite[Satz~6.3]{Bosch}). Unfortunately, the last result being only available in the strictly affinoid case and over non-trivially valued fields, we had to distinguish several cases, which made the paper longer and more technical. The current proof of Proposition~\ref{prop:opendisc} was suggested by a referee. Let us finally mention that Lemma~\ref{lem:ccopen} can also be proven by use of those reduction techniques, using the simple fact that the preimage of a closed point is open.
\end{remark}

\begin{remark}
Theorem~\ref{thm:structurefibre} actually holds regardless of the characteristic of the field. As it is written, our proof works only when~$K$ has characteristic~0 (contrary to the one described in Remark~\ref{rem:previousproof}) because of Lemma~\ref{lem:degree}. If~$K$ is not trivially valued, this can be easily fixed by moving slightly the element~$\beta \in \Os_{x}$ that we choose in the proof of Proposition~\ref{prop:opendisc} (and using Krasner's lemma) in order to assume that $\Os_{x}/\Os_{y}$ is separable. 
 
If~$K$ is trivially valued, a direct proof is possible. Indeed, if~$K$ is algebraically closed, which we can assume, any type~2 point~$x$ appears inside the analytification~$X^\an$ of a smooth connected projective algebraic curve~$X$. By~\cite[1.4.2]{rouge}, the point~$x$ is then the only point of type~2 in~$X^\an$ and its complement is a disjoint union of open discs. Since analytification commutes with extension of scalars, the result easily follows, by arguments that are close to those we used in the proof of Lemma~\ref{lem:structurefibredisc123}.
\end{remark}

\begin{remark}
When working on this paper, Antoine Ducros told us that he also had a proof of Theorem~\ref{thm:structurefibre} (in any characteristic) which is based on computations of \'etale cohomology groups. At that time, is was not available in written form, but the reader may now find it in~\cite[\S5.3]{RSSen}.

%Since it was not available in written form at that time and since the methods he uses (based on computations of \'etale cohomology groups) are quite different from ours, we decided that it was worth including our proof. The details of A.~Ducros's proof are now available in~\cite[Section~5.3]{RSSen}.

%Antoine Ducros told us that he also had a proof of Theorem~\ref{thm:structurefibre} (in any characteristic). His method is quite different and relies on computations of \'etale cohomology groups. Details will appear in the manuscript~\cite{RSSen}.
\end{remark}

\subsection{Radius of convergence}\label{sec:radius}

For the rest of the article, we assume that~$X$ is endowed with a weak triangulation~$S$.

\begin{definition}\label{def:DxS}
Let~$x \in X$. Let~$L$ be a complete valued extension of~$K$ such that $X_{L}$ contains an $L$-rational point~$t_x$ over~$x$. We denote by~$D(t_x,S_{L})$ the biggest open disc centred at~$t_x$ that is contained in $X_{L}\setminus S_{L}$, \textit{i.e.} the connected component of $X_{L}\setminus \Gamma_{S_{L}}$ that contains~$t_x$. 
\end{definition}

\begin{remark}\label{rem:defsimple}
Assume that~$x\notin \Gamma_{S}$. In this case, the connected component~$C$ of $X\setminus\Gamma_{S}$ that contains~$x$ is a virtual disc and $D(t_x,S_{L})$ is the connected component of~$C_{L}$ that contains~$t_x$. 

Assume that $x\in \Gamma_{S}$. In this case, $D(t_x,S_{L})$ is the biggest open disc centred in~$t_x$ that is contained in~$\pi_{L}^{-1}(x)$. 

In particular, the definition of the disc $D(t_x,S_{L})$ depends only on the skeleton~$\Gamma_{S}$ and not on the weak triangulation~$S$ itself. 
\end{remark}

The following lemma is an easy consequence of the definitions.

\begin{lemma}\label{lem:basechangedisc}
Let $M/L$ be an extension of complete valued fields over~$K$. Let~$t_x$ be an $L$-rational point of~$X_{L}$. It naturally gives rise to an $M$-rational point~$t_{x,M}$ of~$X_{M}$. Then we have a natural isomorphism
\[D(t_x,S_{L})\ho_{L} M \xrightarrow[]{\sim} D(t_{x,M},S_{M}).\]
\end{lemma}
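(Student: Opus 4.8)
The plan is to unwind the definition of $D(\ti{x},S_{L})$ given in Definition~\ref{def:DxS} and Remark~\ref{rem:defsimple}, reducing to statements about base change of connected components and of preimages under the projection map $\pi_{M/L}$. The crucial observation is that, by Remark~\ref{rem:defsimple}, the disc $D(\ti{x},S_{L})$ depends only on the skeleton $\Gamma_{S_{L}}$, and there are two cases to treat: either $\ti{x}$ lies outside $\Gamma_{S_{L}}$ (equivalently, its image~$x$ lies outside $\Gamma_{S}$), or $\ti{x}$ lies on $\Gamma_{S_{L}}$.

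First I would treat the case $x \notin \Gamma_{S}$. Let~$C$ be the connected component of $X_{L}\setminus \Gamma_{S_{L}}$ containing~$\ti{x}$; by Remark~\ref{rem:defsimple} it is a virtual open disc, and $D(\ti{x},S_{L})$ is the connected component of $C_{M}$ containing $\ti{x}_{M}$. Since $C$ is a virtual open disc, $C_{M}$ is a disjoint union of open discs, one of which, call it~$C'$, contains $\ti{x}_{M}$; as $\ti{x}$ is $L$-rational, $\ti{x}$ has a single preimage $\ti{x}_{M}$ in $C_{M}$, and $C'\ho_{L}M \xrightarrow{\sim} C'$ in the obvious way. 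It remains to identify $C'$ with $D(\ti{x}_{M},S_{M})$, i.e.\ with the connected component of $X_{M}\setminus\Gamma_{S_{M}}$ containing $\ti{x}_{M}$. This follows because $\Gamma_{S_{M}}$ is, by construction and the corollary following A.~Ducros's theorem, the skeleton of the weak triangulation $S_{M}$ of $X_{M}$, and $\pi_{M/L}^{-1}(\Gamma_{S_{L}})$ differs from $\Gamma_{S_{M}}$ only inside preimages of points of~$\Gamma_{S_{L}}$ — which do not meet $C_{M}$ since $C$ is disjoint from $\Gamma_{S_{L}}$. Hence $C_{M}$ is a union of connected components of $X_{M}\setminus\Gamma_{S_{M}}$, and $C'$ is precisely the one through $\ti{x}_{M}$.

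Next I would treat the case $x \in \Gamma_{S}$. Here $D(\ti{x},S_{L})$ is the biggest open disc centred at~$\ti{x}$ contained in $\pi_{L/K}^{-1}(x)$, equivalently (via the factorization of $\pi_{M/K}$ through $\pi_{L/K}$ and $\pi_{M/L}$) contained in $X_{L}\setminus\Gamma_{S_{L}}$ and centred at $\ti{x}$. The point to check is that the biggest open disc centred at $\ti{x}_{M}$ inside $X_{M}\setminus\Gamma_{S_{M}}$ is the base change of the corresponding disc downstairs. Write $D=D(\ti{x},S_{L})$; then $D_{M}$ is a disjoint union of open discs, and since $\ti{x}$ is $L$-rational the one containing $\ti{x}_{M}$, say $D'$, again satisfies $D'\ho_{L}M\xrightarrow{\sim}D'$ and has $D'\cong D$. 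One checks $D'$ avoids $\Gamma_{S_{M}}$ because $D$ avoids $\Gamma_{S_{L}}$ and, over a point of $\Gamma_{S_{L}}$ other than the centre, the extra skeleton in $\Gamma_{S_{M}}$ lives in the preimage of that point, which $D$ does not meet except possibly at $\ti{x}$ itself — but $\ti{x}$, being $L$-rational, is not in $\Gamma_{S_{L}}$, so this issue does not arise. Maximality of $D'$ among discs centred at $\ti{x}_{M}$ in $X_{M}\setminus\Gamma_{S_{M}}$ follows from maximality of $D$ downstairs together with the description of $\pi_{M/L}^{-1}$ of the relevant points.

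The main obstacle, and the place where one must be careful rather than merely routine, is the bookkeeping around $\Gamma_{S_{M}}$ versus $\pi_{M/L}^{-1}(\Gamma_{S_{L}})$: these are not equal in general (the preimage of a type-$2$ point of $\Gamma_{S_{L}}$ acquires new skeleton by A.~Ducros's theorem on $\pi_{L/K^{a}}^{-1}(x)\setminus\{x_{L}\}$), and one must verify that all such discrepancies occur strictly inside fibres over points of $\Gamma_{S_{L}}$, hence cannot interfere with a disc centred at an $L$-rational point (which never lies on the skeleton). Once this is pinned down, the isomorphism is just the tautological base-change isomorphism of an open disc, compatible with the identification $\ti{x}_{M}\mapsto\ti{x}$ of centres.
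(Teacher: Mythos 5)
The paper gives no proof of this lemma (it is dismissed as ``an easy consequence of the definitions''), so there is nothing to compare against line by line; your unwinding of definition~\ref{def:DxS} and remark~\ref{rem:defsimple} is the intended argument, and its substance is correct. Two remarks. First, the parenthetical ``equivalently, its image~$x$ lies outside~$\Gamma_{S}$'' is not an equivalence: an $L$-rational point never lies on~$\Gamma_{S_{L}}$, since the skeleton contains only points of type~2 or~3, whatever the position of~$x$; the case division you actually carry out, according to whether $x\in\Gamma_{S}$, is the right one. Second, in both cases the only step with real content is the inclusion $D(\ti{x}_{M},S_{M})\subset D(\ti{x},S_{L})\ho_{L}M$, which you assert (``maximality follows'', ``$C_{M}$ is a union of connected components''). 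To pin it down: for $y\in\Gamma_{S_{L}}$ the set $\pi_{M/L}^{-1}(y)\setminus\{y_{M}\}$ is \emph{open} in~$X_{M}$ (this is the content of Ducros's theorem on fibres of type-2 and type-3 points), each of its components is a virtual open disc whose closure meets~$\Gamma_{S_{M}}$ only at~$y_{M}$, and is therefore an entire connected component of $X_{M}\setminus\Gamma_{S_{M}}$ mapping to the single point~$y$. The component $D(\ti{x}_{M},S_{M})$ contains~$\ti{x}_{M}$, whose image~$\ti{x}$ is rational and hence not on~$\Gamma_{S_{L}}$, so it is none of these; consequently it is disjoint from $\pi_{M/L}^{-1}(\Gamma_{S_{L}})$, its image under~$\pi_{M/L}$ is a connected subset of $X_{L}\setminus\Gamma_{S_{L}}$ containing~$\ti{x}$, and it lies in $\pi_{M/L}^{-1}(D(\ti{x},S_{L}))=D(\ti{x},S_{L})\ho_{L}M$. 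Together with the easy inclusion $D(\ti{x},S_{L})\ho_{L}M\subset X_{M}\setminus\Gamma_{S_{M}}$ (which uses $\Gamma_{S_{M}}\subset\pi_{M/L}^{-1}(\Gamma_{S_{L}})$ and the connectedness of the base change of a disc with a rational centre), this gives the claimed identification.
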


The next one follows from Corollary~\ref{cor:GaloisPoonen}.

\begin{lemma}\label{lem:isodisc}
Let $M$ be an algebraically closed and maximally complete valued extension of~$K$. Let~$t_x$ and~$t_x'$ be two points in~$X_{M}$ that project onto the same point on~$X$. Then, there exists $\sigma \in \mathrm{Gal}^c(M/K)$ that sends~$t_x$ to~$t_x'$ and induces an isomorphism $D(t_x,S_{M}) \xrightarrow[]{\sim} D(t_x',S_{M})$.
\end{lemma}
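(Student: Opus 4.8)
The plan is to reduce the assertion to the existence of some $\sigma \in \mathrm{Gal}^{c}(M/K)$ with $\sigma(\ti{x}) = \ti{x}'$; granting this, the statement about discs follows formally. Indeed, such a $\sigma$ induces an automorphism of the $K$-analytic space $X_{M}$ lying over the identity of $X$ (semilinear with respect to $\sigma$ on the $M$-factor), so it restricts to an isomorphism between any open subset of $X_{M}$ and its image. By lemma~\ref{lem:Sinvar} applied with $L = M$, the skeleton $\Gamma_{S_{M}}$ is invariant under $\mathrm{Gal}^{c}(M/K)$, hence $\sigma$ permutes the connected components of $X_{M}\setminus\Gamma_{S_{M}}$. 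Since, by definition~\ref{def:DxS}, $D(\ti{x},S_{M})$ is precisely the connected component of $X_{M}\setminus\Gamma_{S_{M}}$ containing $\ti{x}$, the map $\sigma$ sends it isomorphically onto the component containing $\sigma(\ti{x}) = \ti{x}'$, that is, onto $D(\ti{x}',S_{M})$. (Here $\ti{x}$ and $\ti{x}'$ are tacitly assumed $M$-rational, as is needed for $D(\ti{x},S_{M})$ and $D(\ti{x}',S_{M})$ to make sense according to definition~\ref{def:DxS}.)

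To produce such a $\sigma$, I would translate the problem into field theory. An $M$-rational point of $X_{M}$ lying over $x$ amounts to the same datum as an isometric $K$-embedding $\Hs(x)\hookrightarrow M$, namely the induced map on complete residue fields — boundedness forcing isometry, since a bounded homomorphism between complete valued fields is automatically isometric — and under this correspondence $\mathrm{Gal}^{c}(M/K)$ acts by post-composition. Let $\iota,\iota' \colon \Hs(x)\hookrightarrow M$ denote the embeddings attached to $\ti{x}$ and $\ti{x}'$. Then finding $\sigma$ with $\sigma(\ti{x}) = \ti{x}'$ is exactly finding an extension of the isometric $K$-isomorphism $\iota'\circ\iota^{-1} \colon \iota(\Hs(x)) \to \iota'(\Hs(x))$, between two complete subfields of $M$, to a continuous automorphism of $M$. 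This is the content of the Dwork--Robba lemma~\ref{lem:GaloisDR}: the version recalled above is the special case in which the two subfields coincide, but the argument of \cite[lemma~8.2]{DworkRobbaGalois} applies without change to an isomorphism between two a priori distinct complete subfields.

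The one point that deserves attention is this last one, namely making sure that the form of the Dwork--Robba lemma actually invoked — extension of isomorphisms between possibly distinct complete subfields of $M$, rather than of automorphisms of a single one — is genuinely available; it is, and it is the only non-formal ingredient. I do not expect any further obstacle: the geometry of the curve and the choice of triangulation enter only through the Galois-invariance of the skeleton $\Gamma_{S_{M}}$, everything else being a direct unwinding of the definitions.
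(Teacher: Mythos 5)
Your proof is correct and follows essentially the same route as the paper: lemma~\ref{lem:GaloisDR} produces a $\sigma\in\mathrm{Gal}^c(M/K)$ with $\sigma(\ti{x})=\ti{x}'$, and lemma~\ref{lem:Sinvar} then forces $\sigma$ to carry $D(\ti{x},S_M)$ onto $D(\ti{x}',S_M)$ (the paper phrases this last step as a double inclusion via $\sigma$ and $\sigma^{-1}$ rather than as a permutation of connected components, which is the same thing). Your extra care about the form of the Dwork--Robba lemma needed --- extending an isometric isomorphism between two possibly distinct complete subfields of $M$, not just an automorphism of one --- is a point the paper passes over silently, and you resolve it correctly.
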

\begin{proof}
By Corollary~\ref{cor:GaloisPoonen}, there exists $\sigma \in \mathrm{Gal}^c(M/K)$ that sends~$t_x$ to~$t_x'$. By Lemma~\ref{lem:Sinvar}, the skeleton~$S_{M}$ is invariant under $\mathrm{Gal}^c(M/K)$. Hence, the isomorphism~$\psi_{\sigma}$ of~$X_{M}$ induced by~$\sigma$ sends the disc $D(t_x,S_{M})$ to a disc that contains~$t_x'$ and does not meet~$S_{M}$. We deduce that $\psi_{\sigma}(D(t_x,S_{M})) \subset D(t_x',S_{M})$. Using the same argument with~$\sigma^{-1}$, one shows the reverse inclusion.
\end{proof}

In the introduction, we explained that the radius of convergence was to appear as the radius of some disc. Unfortunately, the radius of a disc is not invariant by isomorphism. This leads us to define the radius of convergence as a relative radius inside a fixed bigger disc. The lemma that follows will help to show that it is well-defined.

\begin{lemma}\label{lem:isometry}
Let $R_{1},R_{2} >0$. Up to a translation of the coordinate~$t$, any isomorphism $\alpha \colon D_{K}^-(0,R_{1}) \xrightarrow[]{\sim} D_{K}^-(0,R_{2})$ is given by a power series of the form 
\[f(t) = \sum_{i\ge 1} a_{i} t^i \in K\llbracket t \rrbracket,\] 
with $|a_{1}| =  R_{2}/R_{1}$ and, for every $i \ge 2$, $|a_{i}| \le R_{2}/R_{1}$. In particular, it multiplies distances by the constant factor $R_{2}/R_{1}$: for every complete valued extension~$L$ of~$K$ and every $x,y$ in $D_{K}^-(0,R_{1})(L)$, we have
\[|\alpha(x)-\alpha(y)| = \frac{R_{2}}{R_{1}}\, |x-y|.\]
As a consequence, such an isomorphism may only exist when $R_{2}/R_{1} \in |K^*|$.
\end{lemma}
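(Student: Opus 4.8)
The plan is to analyze an arbitrary isomorphism $\alpha \colon D_K^-(0,R_1) \xrightarrow{\sim} D_K^-(0,R_2)$ by looking at its effect on the coordinate function $t$. First I would observe that $\alpha$ corresponds to a $K$-algebra morphism on the rings of analytic functions, so $\alpha^*(t)$ is an analytic function $f$ on $D_K^-(0,R_1)$, that is, a power series $\sum_{i\ge 0} a_i t^i$ with $|a_i| R_1^i \to 0$ as $i \to \infty$. After translating the target coordinate (replacing $t$ by $t - a_0$) we may assume $a_0 = 0$, i.e.\ $\alpha$ sends the centre to the centre; note this translation does not change distances. The key constraints come from the fact that both $\alpha$ and $\alpha^{-1}$ are morphisms \emph{of open discs of the given radii}: the sup-norm (spectral seminorm) of $f$ on $D_K^-(0,R_1)$ must be exactly $R_2$, which is the sup-norm of $t$ on $D_K^-(0,R_2)$. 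Since the sup-norm on the open disc is $\sup_i |a_i| R_1^i$ (a supremum, possibly not attained, of the norms on the closed subdiscs of radius $<R_1$), this forces $|a_i|\, R_1^i \le R_2$ for all $i$, i.e.\ $|a_i| \le R_2/R_1$ for $i \ge 1$ (using $R_1^{i-1}\le$ appropriate bounds — more carefully, $|a_i| R_1^i \le R_2$ gives $|a_i|\le R_2 R_1^{-i} \le R_2/R_1$ only when $i\ge 1$ and $R_1\le 1$; in general one should argue directly with the closed subdiscs, see below).

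Next I would pin down $a_1$. Writing $g = \alpha^{-1*}(t) = \sum_{j\ge 1} b_j t^j$ on $D_K^-(0,R_2)$, the relation $\alpha \circ \alpha^{-1} = \mathrm{id}$ and $\alpha^{-1}\circ\alpha = \mathrm{id}$ translate into $f(g(t)) = t$ and $g(f(t)) = t$ as formal power series (composition makes sense since both have zero constant term). Comparing degree-one coefficients gives $a_1 b_1 = 1$. By the sup-norm estimate applied to $\alpha$ we get $|a_1| \le R_2/R_1$, and applied to $\alpha^{-1}$ we get $|b_1| \le R_1/R_2$; multiplying, $1 = |a_1||b_1| \le (R_2/R_1)(R_1/R_2) = 1$, so both inequalities are equalities: $|a_1| = R_2/R_1$ and $|b_1| = R_1/R_2$. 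In particular $R_2/R_1 = |a_1| \in |K^*|$, which is the last assertion. Combined with $|a_i|\le R_2/R_1$ for $i\ge 2$, this gives the stated form of $f$.

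Finally, the distance-multiplication formula: for $x,y \in D_K^-(0,R_1)(L)$ with $L/K$ a complete valued extension, we have $\alpha(x) - \alpha(y) = f(x) - f(y) = \sum_{i\ge 1} a_i (x^i - y^i) = (x-y)\sum_{i\ge 1} a_i (x^{i-1} + x^{i-2}y + \cdots + y^{i-1})$. The $i=1$ term of the sum in parentheses is $a_1$, with $|a_1| = R_2/R_1$; every term with $i\ge 2$ has absolute value $\le |a_i|\, \max(|x|,|y|)^{i-1} < (R_2/R_1)\cdot 1$ strictly, because $|a_i|\le R_2/R_1$ and $\max(|x|,|y|) < R_1$ forces $\max(|x|,|y|)^{i-1}/R_1^{\,0}$... more cleanly: $|a_i|\max(|x|,|y|)^{i-1} \le (R_2/R_1^i)\max(|x|,|y|)^{i-1} < R_2/R_1$ since $\max(|x|,|y|) < R_1$. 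So the sum in parentheses is an ultrametric sum whose unique dominant term is $a_1$, hence has absolute value exactly $R_2/R_1$, and therefore $|\alpha(x) - \alpha(y)| = (R_2/R_1)\,|x-y|$. I expect the main subtlety to be the careful handling of the sup-norm on an \emph{open} disc, where the supremum $\sup_i |a_i| R_1^i$ need not be attained and one must phrase the estimates via the closed subdiscs $D_K^+(0,r)$ for $r < R_1$ and let $r \to R_1$; once that bookkeeping is set up, everything else is a routine ultrametric computation.
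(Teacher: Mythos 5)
The paper states this lemma without proof, so there is nothing to compare your argument against; what you have written is the standard argument, and it is correct in all essentials: pull back the coordinate to get $f=\sum_{i\ge 0}a_it^i\in\mathscr{O}(D_K^-(0,R_1))$, translate away $a_0$, bound the coefficients by evaluating sup-norms on the closed subdiscs $D_K^+(0,r)$ and letting $r\to R_1$, obtain $|a_1|=R_2/R_1$ from $a_1b_1=1$ combined with the two one-sided bounds on $\alpha$ and $\alpha^{-1}$, and conclude the distance formula by ultrametric domination of the linear term. Two remarks. First, your opening description of $\mathscr{O}(D_K^-(0,R_1))$ as the series with $|a_i|R_1^i\to 0$ is not correct (the right condition is $|a_i|r^i\to 0$ for every $r<R_1$; the quantities $|a_i|R_1^i$ need not even be bounded a priori), but this is harmless since your actual estimates are carried out on the closed subdiscs, exactly as they should be. Second, the difficulty you flag in deducing $|a_i|\le R_2/R_1$ for $i\ge 2$ is genuine: the argument yields $|a_i|R_1^i\le R_2$, i.e.\ $|a_i|\le R_2R_1^{-i}$, which implies the bound as stated in the lemma only when $R_1\ge 1$. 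For $R_1<1$ the stated bound can actually fail (for instance $t\mapsto t+p^{-1}t^2$ is an automorphism of $D^-_{\mathbf{C}_p}(0,|p|)$ with $|a_2|=|p|^{-1}>1=R_2/R_1$), so the inequality in the lemma should be read as $|a_i|R_1^{i-1}\le R_2/R_1$. This does not affect anything of substance: the bound $|a_i|\le R_2R_1^{-i}$ is precisely what your final estimate uses, and it suffices for the distance formula and for every application of the lemma in the paper.
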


%\begin{lemma}\label{lem:isometry}
%Let $R_{1},R_{2} >0$. Up to a translation of the coordinate~$t$, any isomorphism $\alpha \colon D_{K}^-(0,R_{1}) \xrightarrow[]{\sim} D_{K}^-(0,R_{2})$ is given by a power series of the form 
%\[f(t) = \sum_{i\ge 1} a_{i} t^i \in K\llbracket t \rrbracket,\] 
%with $|a_{1}| =  R_{2}/R_{1}$ and, for every $i \ge 2$, $|a_{i}| \le R_{2}/R_{1}$. In particular, it multiplies distances by the constant factor $R_{2}/R_{1}$: for any complete valued extension~$L$ of~$K$, we have
%\[\forall x,y \in D_{K}^-(0,R_{1})(L),\, |\alpha(x)-\alpha(y)| = \frac{R_{2}}{R_{1}}\, |x-y|.\]
%As a consequence, such an isomorphism may only exist when $R_{2}/R_{1} \in |K^*|$.
%\end{lemma}

We may now adapt the usual definition of radius of convergence (see~\cite[\S4.4]{finiteness}, as well as~\cite[Notation~11.3.1 and Definition~11.9.1]{pde}).

\begin{definition}\label{def:radius}
Let~$\Fs$ be a locally free $\Os_{X}$-module of finite type with an integrable connection~$\nabla$. Let~$x$ be a point in~$X$ and~$L$ be a complete valued extension of~$K$ such that $X_{L}$ contains an $L$-rational point~$t_x$ over~$x$. Let us consider the pull-back~$(\Fs^x,\nabla^x)$ of~$(\Fs,\nabla)$ on $D(t_x,S_{L}) \simeq D_{L}^-(0,R)$. Let $r=\mathrm{rk}(\Fs^{x})$. For $i\in \cn{1}{r}$, we denote by~$\Rc'_{S,i}(x,(\Fs,\nabla))$ the radius of the biggest open subdisc of~$D_{L}^-(0,R)$ centred at~$0$ on which the connection $(\Fs^x,\nabla^x)$ admits at least~$r-i+1$ horizontal sections that are linearly independent  over~$L$. Let us define the $i^\textrm{th}$~radius of convergence of $(\Fs,\nabla)$ at~$x$ by $\Rc_{S,i}(x,(\Fs,\nabla)) = \Rc'_{S,i}(x,(\Fs,\nabla))/R$ and the multiradius of convergence of $(\Fs,\nabla)$ at~$x$ by
\[\bRc_{S}(x,(\Fs,\nabla)) = (\Rc_{S,1}(x,(\Fs,\nabla)),\dots,\Rc_{S,r}(x,(\Fs,\nabla))) \in (0,1]^r.\]
We will frequently suppress~$\nabla$ from the notation when it is clear from the context.
\end{definition}

\begin{remark}\label{rem:RS1}
With the previous notations, one may also consider the radius of the biggest open subdisc of~$D_{L}^-(0,R)$ centred at~$0$ on which the connection $(\Fs^x,\nabla^x)$ is trivial, as in~\cite[Definition~3.1.7]{ContinuityCurves}, for instance. This way, one recovers the radius~$\Rc_{S,1}$. 
\end{remark}

\begin{remark}\label{rem:radiiGammaS}
By Remark~\ref{rem:defsimple}, the radii depend only on the skeleton~$\Gamma_{S}$ and not on the weak triangulation~$S$ itself.
\end{remark}

Definition~\ref{def:radius} is independent of the choices made and invariant by extensions of the ground field~$K$, thanks to the preceding lemmas (first prove the independence of the isomorphism $D(t_x,S_{L}) \simeq D_{L}^-(0,R)$ and in particular of~$R$, then the invariance by base-change for rational points and finally reduce to the case where~$L$ is algebraically closed and maximally complete). We state the following for future reference.

\begin{lemma}\label{lem:basechange}
Let~$L$ be a complete valued extension of~$K$. For any $x\in X_{L}$, we have
\[\bRc_{S_{L}}(x,\pi_{L}^*(\Fs,\nabla)) = \bRc_{S}(\pi_{L}(x),(\Fs,\nabla)).\]
\end{lemma}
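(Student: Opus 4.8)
The plan is to verify Lemma~\ref{lem:basechange} by reducing, via the preceding base-change and Galois lemmas, to the case of a rational point over an algebraically closed, maximally complete field, where the disc $D(\ti x, S_L)$ can be compared directly. Fix $x \in X_L$ and write $y = \pi_L(x) \in X$. First I would choose an algebraically closed, maximally complete valued extension $M$ of $L$ (hence of $K$). By Definition~\ref{def:radius}, since $M$ is algebraically closed, there is an $M$-rational point $\ti x$ over $x$ (and, a fortiori, over $y$), and by the remark recording that Definition~\ref{def:radius} is independent of choices and invariant by base-change, it suffices to compute both sides after pulling everything back to $M$ at the point $\ti x$. Concretely, $\bRc_{S_L}(x,\pi_L^*(\Fs,\nabla))$ is computed from the pull-back of $\pi_L^*(\Fs,\nabla)$ to $D(\ti x, S_M)$, while $\bRc_S(y,(\Fs,\nabla))$ is computed from the pull-back of $(\Fs,\nabla)$ to the same $D(\ti x, S_M)$; so the point is just that these two pull-backs agree and that $D(\ti x, S_M)$ is the same disc whether computed relative to $S_L$ or to $S$.

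The key geometric input is Lemma~\ref{lem:basechangedisc}, which gives a natural isomorphism $D(\ti x, S_L)\ho_L M \xrightarrow{\sim} D(\ti x_M, S_M)$, compatible with the analogous isomorphism starting from $S$ over $K$; equivalently, by Remark~\ref{rem:defsimple}, $D(\ti x, S_L)$ depends only on the skeleton, and $\Gamma_{S_L}$ is by construction the image under $\pi_{M/L}$ of the lift of $\Gamma_{S_{K^a}}$, so that over $M$ the skeletons coming from $S$ and from $S_L$ coincide. Hence $D(\ti x, S_M)$ computed from $S_L$ equals $D(\ti x, S_M)$ computed from $S$. The differential-module side is immediate: $\pi_L^*(\Fs,\nabla)$ pulled back to $X_M$ is just $\pi_M^*(\Fs,\nabla) = \pi_{M/K}^*(\Fs,\nabla)$, by functoriality of pull-back along $X_M \to X_L \to X$, so the restriction of $\pi_L^*(\Fs,\nabla)$ to $D(\ti x, S_M)$ and the restriction of $(\Fs,\nabla)$ to the same disc agree. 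Counting the horizontal sections that are linearly independent over $M$ and normalising by the same radius $R$ then gives $\bRc_{S_M}(\ti x,\pi_M^*(\Fs,\nabla))$ on both sides.

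It remains to descend from $M$ back to $L$ and to $K$. For this I would invoke the already-established well-definedness of Definition~\ref{def:radius}: by Lemma~\ref{lem:basechangedisc} applied to $M/L$ and to $M/K$, together with Lemma~\ref{lem:isodisc} to handle the dependence on the chosen rational point $\ti x$ over $x$ (resp. over $y$), we get $\bRc_{S_L}(x,\pi_L^*(\Fs,\nabla)) = \bRc_{S_M}(\ti x, \pi_M^*(\Fs,\nabla)) = \bRc_S(y,(\Fs,\nabla))$. The main obstacle, such as it is, lies in making the identification of skeletons over $M$ fully rigorous — i.e. checking that $(\Gamma_{S_L})_M$, as lifted to $X_M$, coincides with $(\Gamma_S)_M$ — but this is exactly what the construction of $S_L$ and $\Gamma_{S_L}$ via $\pi_{M/L}$ of lifts of $S_{K^a}$, $\Gamma_{S_{K^a}}$ was designed to ensure, combined with Lemma~\ref{lem:Sinvar} to see that the choice of lift is irrelevant. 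Once that is in place, the rest is bookkeeping: both multiradii are computed from literally the same disc and the same connection over $M$, so they are equal.
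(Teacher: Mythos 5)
Your argument is essentially the paper's own (the paper only sketches this proof in the paragraph preceding the lemma): reduce to an algebraically closed, maximally complete field $M$, identify the disc $D(\ti x,S_M)$ and the pulled-back connection there via Lemma~\ref{lem:basechangedisc} and the construction of $S_L$ from $S_{K^a}$, and use Lemmas~\ref{lem:Sinvar} and~\ref{lem:isodisc} to see that the choice of rational point is irrelevant. One small correction: an arbitrary algebraically closed, maximally complete extension $M$ of $L$ need not admit an $M$-rational point over a given $x\in X_L$, so you must choose $M$ to contain $\Hs(x)$ isometrically (as the paper does, for instance, in the proof of Lemma~\ref{lem:radiusetale}).
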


Let us now explain how the function behaves with respect to changing triangulations. Let~$S'$ be a weak triangulation of~$X$ such that~$\Gamma_{S'}$ contains~$\Gamma_{S}$. Let $x \in X$. Let~$L$ be a complete valued extension of~$K$ such that $X_{L}$ contains an $L$-rational point~$t_x$ over~$x$. By~Remark~\ref{rem:defsimple}, the disc $D(t_x,S'_{L})$ is included in $D(t_x,S_{L}) \simeq D_{L}^-(0,R)$. Let~$R'$ be its radius as a sub-disc of $D_{L}^-(0,R)$ and set $\rho_{S',S}(x) = R'/R \in (0,1]$. Remark that $\rho_{S',S}$ is constant and equal to~1 on~$\Gamma_{S}$. It is now easy to check that, for any $i\in\cn{1}{\mathrm{rk}(\Fs_{x})}$, we have
\begin{equation}\label{eq:rhoS'S}
\Rc_{S',i}(x,(\Fs,\nabla)) = \min \left(\frac{\Rc_{S,i}(x,(\Fs,\nabla))}{\rho_{S',S}(x)},1\right).
\end{equation}

\subsection{Comparison with other definitions}\label{sec:comparison}

We compare the radius of convergence we have introduced in Definition~\ref{def:radius} to other radii that appear in the literature.

\subsubsection{F.~Baldassarri's definition using semistable models}\label{sec:Baldassarri}

The first definition of radius of convergence on a curve has been given by F.~Baldassarri in~\cite{ContinuityCurves}. It was our main source of inspiration and our definition is very close to his.

Assume that the absolute value of~$K$ is non-trivial and that the curve~$X$ is strictly $K$-affinoid. In this case, it is known that there exits a finite separable extension~$L/K$ such that the curve~$X_{L}$ admits a semistable formal model~$\Xk$ over~$L^\circ$.

There actually exists a strong relation between the semistable models of~$X_{L}$ and its triangulations (see~\cite[\S6.4]{RSSen}, for a detailed account). Indeed, for any generic point~$\ti\xi$ of the special fibre~$\Xk_{s}$ of~$\Xk$, let~$\xi$ be the unique point of the generic fibre $\Xk_{\eta} = X_{L}$ whose reduction is equal to~$\ti\xi$. Gathering all points~$\xi$, we construct a finite set~$S(\Xk)$ that is a triangulation of~$X_{L}$. Let us remark that our notation unfortunately disagrees with F.~Baldassarri's: in~\cite{ContinuityCurves}, $S(\Xk)$~denotes the skeleton (\textit{i.e.} $\Gamma_{S(\Xk)}$ using our notations) and not the triangulation.

Let~$x \in X_{L}(L)$. In~\cite{ContinuityCurves}, F.~Baldassarri considers the biggest disc that does not meet the skeleton~$\Gamma_{S(\Xk)}$ of~$\Xk$ (see \cite[Definition~1.6.6]{ContinuityCurves}). It is nothing but our disc~$D(x,S(\Xk))$. Since every point of~$S(\Xk)$ has type~2, this disc is actually isomorphic to the open unit disc~$D_{L}^-(0,1)$, and F.~Baldassarri defines the radius of convergence~$\Rc_{\Xk}(x,\pi^*_{L}(\Fs,\nabla))$ of~$\pi_{L}^*(\Fs,\nabla)$ at~$x$ as the radius~$r$ of the biggest open subdisc of $D_{L}^-(0,1)$ centred at~0 on which~$(\Fs,\nabla)$ is trivial (see~\cite[Definition~3.1.8]{ContinuityCurves}). This is compatible with our definition that uses a relative radius (see Lemma~\ref{lem:isometry} and Remark~\ref{rem:RS1}). Finally, we have proved that, for any $x\in X_{L}(L)$, we have
\[\Rc_{\Xk}(x,\pi_{L}^*(\Fs,\nabla)) = \Rc_{S_{L},1}(x,\pi^*_{L}(\Fs,\nabla)).\]

F.~Baldassarri extends his definition to other points of the curve by extending the scalars so as to make them rational (see~\cite[Definition~3.1.11]{ContinuityCurves}). One may check that, for any complete valued extension~$M$ of~$L$, $\Xk\ho_{L^\circ} M^\circ$ is as semistable model of~$X_{M}$ and that $S(\Xk\ho_{L^\circ} M^\circ) = S(\Xk)_{M}$. Hence our definition coincides with F.~Baldassarri's everywhere. 

Let us also point out that, conversely, for any triangulation~$S$ of~$X$ that only contains points of type~2, there exits a finite separable extension~$L/K$ and a semistable formal model~$\Xk$ of~$X_{L}$ such that $S(\Xk) = S_{L}$. Hence, under the hypotheses of this section and if we restrict to triangulations that contains only points of type~2, our definition is essentially equivalent to F.~Baldassarri's. (Remark that, in the non-strict case, triangulations must be allowed to contain points of type~3. We also allow this in general, as A.~Ducros does.)

\medskip

Finally, let us mention that F.~Baldassarri actually considers a slightly more general situation: $X = \bar{X}\setminus\{z_{1},\ldots,z_{r}\}$, where~$\bar{X}$ is a compact curve as above and $z_{1},\dots,z_{r}$ are $K$-rational points. In this case, he constructs the skeleton of~$X$ by branching on the skeleton of~$\bar{X}$ a half-line~$\ell_{i}$ that goes in the direction of~$z_{i}$, for each~$i$. The definition of radius of convergence may then be adapted.

Let us mention that this more general situation is already covered in our setting, since we did not require the curves to be compact. To find the same skeleton, it is enough to begin with the triangulation of~$\bar{X}$ and add, for each~$i$, a sequence of points that lie on~$\ell_{i}$ and tend to~$z_{i}$.

\subsubsection{The definition for analytic domains of the affine line}\label{sec:affineline}

Assume that~$X$ is an analytic domain of the affine line~$\A^{1,\an}_{K}$. The choice of a coordinate~$t$ on~$\A^{1,\an}_{K}$ provides a global coordinate on~$X$ and it seems natural to use it in order to measure the radii of convergence. This normalisation has been used by F.~Baldassarri and L.~Di Vizio in~\cite{ContinuityBDV} (for the first radius) and by the second author in~\cite{finiteness}. We will call ``embedded'' the radii we define in this setting. 

From now on, we will assume that~$X$ is not the affine line. Let us first give a definition of radii that does not refer to any triangulation.

\begin{definition}\label{def:multiradiusemb}
Let~$x$ be a point of~$X$ and~$L$ be a complete valued extension of~$K$ such that $X_{L}$ contains an $L$-rational point~$t_x$ over~$x$. Let $D(t_x,X_{L})$ be the biggest open disc centred at~$t_x$ that is contained in~$X_{L}$ (which exists since $X \ne \E{1}{K}$).

Let us consider the pull-back~$(\Fs^x,\nabla^x)$ of~$(\Fs,\nabla)$ on $D(t_x,X_{L})$. Let $r=\mathrm{rk}(\Fs^{x})$. For $i\in \cn{1}{r}$, we denote by $\Rc_{i}^\mathrm{emb}(x,(\Fs,\nabla))$ the radius of the biggest open subdisc of $D(t_x,X_{L})$ centred at~$t_x$, measured using the coordinate~$t$ on~$\A^{1,\an}_{L}$, on which the connection $(\Fs^x,\nabla^x)$ admits at least~$r-i+1$ horizontal sections that are linearly independent  over~$L$.
\end{definition}

As before, one checks that the definition of~$\Rc_{i}^\mathrm{emb}(x,(\Fs,\nabla))$ only depends on the point~$x$ and not on~$L$ or~$t_x$. This radius is denoted by~$\Rc_{i}^M(x)$ in~\cite[\S4.4]{finiteness}. There, the second author works over an affinoid domain~$V$ of the affine line. Over~$V$, the sheaf~$\Fs$ is free and, if~$\Os(V)$ is endowed with the usual derivation $d = \mathrm{d}/\mathrm{d}t$, the pair~$(\Fs,\nabla)$ corresponds to a differential module~$(M,D)$ over~$(\Os(V),d)$. This is where the~$M$ in the notation comes from.

\medbreak

Although possibly superfluous in this context, it is also possible to state a definition that depends on a weak triangulation~$S$ of~$X$.

\begin{definition}\label{def:multiradiusembS}
Let~$x$ be a point of~$X$ and~$L$ be a complete valued extension of~$K$ such that $X_{L}$ contains an $L$-rational point~$t_x$ over~$x$. As in Definition~\ref{def:DxS}, consider $D(t_x,S_{L})$, the biggest open disc centred at~$t_x$ that is contained in $X_{L}\setminus S_{L}$. We denote by~$\rho_{S}(x)$ its radius, measured using the coordinate~$t$ on~$\A^{1,\an}_{L}$. 

Let us consider the pull-back~$(\Fs^x,\nabla^x)$ of~$(\Fs,\nabla)$ on $D(t_x,S_{L})$. Let $r=\mathrm{rk}(\Fs_{x})$. For $i\in \cn{1}{r}$, we denote by $\Rc^\mathrm{emb}_{S,i}(x,(\Fs,\nabla))$ the radius of the biggest open subdisc of $D(t_x,S_{L})$ centred at~$t_x$, measured using the coordinate~$t$ on~$\A^{1,\an}_{L}$, on which the connection $(\Fs^x,\nabla^x)$ admits at least~$r-i+1$ horizontal sections that are linearly independent  over~$L$.
\end{definition}

Once again, the definitions of~$\rho_{S}(x)$ and~$\Rc^\mathrm{emb}_{S,i}(x,(\Fs,\nabla))$ are independent of the choices of~$L$ and~$t_x$. This radius is denoted by~$\Rc^M_{S,i}(x)$ in~\cite[\S8]{finiteness}.

\medskip

The radii we have just defined may easily be linked to the one we introduced in Definition~\ref{def:radius}. The simplest case is the second: for any $i\in \cn{1}{\mathrm{rk}(\Fs_{x})}$, we have
\begin{equation}\label{eq:radiusembS}
\Rc_{S,i}(x,(\Fs,\nabla)) = \frac{\Rc^\textrm{emb}_{S,i}(x,(\Fs,\nabla))}{\rho_{S}(x)}.
\end{equation}

Since~$X$ is not the affine line, we have the following result, whose proof we leave to the reader.

\begin{lemma}\label{lem:minimalskeleton}
The set of skeletons of the weak triangulations of~$X$ admits a smallest element~$\Gamma_{0}(X)$. It coincides with the analytic skeleton of~$X$, \textit{i.e.} the set of points that have no neighbourhood isomorphic to a virtual open disc. \hfill \qed
\end{lemma}

Let~$S_{0}$ be a weak triangulation of~$X$ whose skeleton is~$\Gamma_{0}$. By Remark~\ref{rem:radiiGammaS}, the radii computed with respect to a weak triangulation only depend on the skeleton of the latter. In particular, the radii computed with respect to~$S_{0}$ are well-defined. It is now easy to check that, for every $i\in \cn{1}{\mathrm{rk}(\Fs_{x})}$, we have
\begin{equation}\label{eq:radiusembS0}
\Rc_{S_{0},i}(x,(\Fs,\nabla)) = \frac{\Rc^\textrm{emb}_{S_{0},i}(x,(\Fs,\nabla))}{\rho_{S_{0}}(x)} =  \frac{\Rc_{i}^\textrm{emb}(x,(\Fs,\nabla))}{\rho_{S_{0}}(x)}.
\end{equation}

\section{The result}\label{sec:result}

We have just defined the radius of convergence of~$(\Fs,\nabla)$ at any point of the curve~$X$. We now investigate its properties.

\subsection{Statement}

Let us state precisely the result we are interested in. We will first define the notion of locally finite subgraph of~$X$.

From the existence of weak triangulations, one deduces that every point of~$X$ has a neighbourhood that is uniquely arcwise connected. In particular, the curve~$X$ may be covered by uniquely arcwise connected analytic domains. On such a subset, it makes sense to speak of the segment~$[x,y]$ joining two given points~$x$ and~$y$, hence of convex subsets (see also~\cite[\S2.5]{BR}).

\begin{definition}
A subset~$\Gamma$ of~$X$ is said to be a finite (resp. locally finite) subgraph of~$X$ if there exists a finite (resp. locally finite)  family~$\Vs$ of affinoid domains of~$X$ that covers~$\Gamma$ and such that, for every element~$V$ of~$\Vs$, we have
\begin{enumerate}
\item $V$ is uniquely arcwise connected;
\item $\Gamma \cap V$ is the convex hull of a finite number of points.
\end{enumerate}
\end{definition}

We now want to define a notion of $\log$-linearity. To do so, we first need to explain how to measure distances. 

\begin{definition}\label{defi:modulus}
Let~$C$ be a closed virtual annulus over~$K$. Its preimage over~$K^a$ is a finite union of closed annuli. If $C_{K^a}^+(c,R_{1},R_{2})$ is on of them, we set
\[\textrm{Mod}(C) = \frac{R_{2}}{R_{1}}.\]
It is independent of the choices.
\end{definition}

\begin{notation}
Let~$I$ be a closed interval inside a virtual disc or annulus and assume that~$I$ contains only points of type~2 or~3. Then~$I$ is the skeleton of a virtual closed subannulus~$I^\sharp$ and we set 
\[\ell(I) = \log(\textrm{Mod}(I^\sharp)).\]
\end{notation}

\begin{remark}
Pushing these ideas further, one can show that it is possible to define a canonical length for any closed interval inside a curve that contains only points of type~2 or~3 (see~\cite[Proposition~4.5.7]{RSSen}). The definition may actually be extended to the whole curve see~\cite[Corollaire~4.5.8]{RSSen}).
\end{remark}

\begin{definition}\label{defi:loglinear}
Let~$X_{[2,3]}$ be the set of points of~$X$ that are of type~2 or~3. Let~$J$ be an open interval inside~$X_{[2,3]}$ and identify it with a real interval. A map $f \colon J \to \R_{+}^\ast$ is said to be $\log$-linear if there exists~$\gamma\in \R$ such that, for every $a < b \in J$, we have
\[\log(f(b)) - \log(f(a)) = \gamma\, \ell([a,b]).\] 

Let~$\Gamma$ be a locally finite subgraph of~$X$. A map $f \colon \Gamma \to \R_{+}^\ast$ is said to be piecewise $\log$-linear if~$\Gamma$ may be covered by a locally finite family~$\Js$ of closed intervals such that, for every $J\in\Js$, the restriction of~$f$ to~$\mathring{J}$ is $\log$-linear.
\end{definition}

We may now state the theorem we want to prove.

\begin{theorem}\label{thm:continuousandfinite}
The map~$\bRc_{S}$ satisfies the following properties:
\begin{enumerate}[\it i)]
\item it is continuous;
\item its restriction to any locally finite graph~$\Gamma$ is piecewise $\log$-linear;
\item on any interval~$J$, the $\log$-slopes of its restriction are rational numbers of the form~$m/j$ with $m\in\Z$ and $j\in\cn{1}{r}$, where~$r$ is the rank of~$\Fs$ around~$J$; 
\item there exists a locally finite subgraph~$\Gamma$ of~$X$ and a continuous retraction $r\colon X \to \Gamma$ such that the map~$\bRc_{S}$ factorises by~$r$.
\end{enumerate}
\end{theorem}

The continuity of the radius of convergence~$\Rc_{S,1}$ has been proven by F.~Baldassarri and L.~Di Vizio in the case of affinoid domains of the affine line (see~\cite{ContinuityBDV}) and by F.~Baldassarri in general (see~\cite{ContinuityCurves}). His setting is actually slightly less general than ours, but his result extends easily. 

For the multiradius of convergence on affinoid domains of the affine line, the result is due to the second author (see~\cite[Theorem~4.6.3]{finiteness} for the case of the minimal weak triangulation and~\cite[\S8]{finiteness} in general).

\begin{theorem}[(A.~Pulita)]\label{thm:Andrea}
Theorem~\ref{thm:continuousandfinite} holds when~$X$ is an affinoid domain of the affine line.
\end{theorem}

Let us write down a corollary of Theorem~\ref{thm:Andrea} about open discs that will be useful later.

\begin{corollary}\label{cor:continuousopendisc}
Assume that~$X$ is an open disc endowed with the empty triangulation. Let~$x$ be a point of~$X$ and let~$[x,y]$ be a segment with initial point~$x$. The restriction of the map~$\bRc_{S}$ to the segment~$[x,y]$ is continuous at the point~$x$ and $\log$-linear in the neighbourhood of~$x$ with slopes of the form~$m/j$ with $m\in\Z$ and $j\in\cn{1}{\rk(\Fs)}$.
\end{corollary}
\begin{proof}
We will identify~$X$ with the disc $D^-(0,R)$ for some~$R>0$. Let $i\in\cn{1}{\rk(\Fs)}$. Pick $r\in (0,1)$ such that $[x,y] \subset D^-(0,rR)$.

Let us first assume that $\Rc_{\emptyset,i}(\wc,\Fs)$ is smaller than or equal to~$r$ on~$[x,y]$. Let us endow~$X$ with the triangulation $S = \{x_{0,rR}\}$. By Formula~(\ref{eq:rhoS'S}), for every $z\in [x,y]$, we have 
\[\Rc_{S,i}(z,\Fs) = r\, \Rc_{\emptyset,i}(z,\Fs).\] 
Moreover, for every~$z\in D^+(0,rR)$, we have $\Rc_{S,i}(z,\Fs) = r\, \Rc_{S,i}(z,\Fs_{|D^+(0,rR)})$.
The result now follows from Theorem~\ref{thm:Andrea}.

We may now assume that there exists $z\in [x,y]$ and $r'\in (r,1)$ such that $\Rc_{\emptyset,i}(z,\Fs) = r'$. This means that the module~$\Fs$ has a trivial submodule of corank~$i-1$ on~$D^-(0,r'R)$, hence $\Rc_{\emptyset,i}(\wc,\Fs)=r'$ on~$D^-(0,r'R)$ and the result follows trivially. 
\end{proof}

\begin{remark}
Let us mention that if one is only interested in the first radius of convergence~$\Rc_{S,1}$ on boundary-free curves (or with overconvergent connections), it is possible to get the result of Theorem~\ref{thm:continuousandfinite} in a much shorter way \textit{via} potential theory (see~\cite{potential}).
\end{remark}

\begin{remark}
The result of Theorem~\ref{thm:continuousandfinite} can be strengthened. For instance, one can require that the graph~$\Gamma$ contains no point of type~4. This is equivalent to the fact that the radii are constant in the neighbourhood of every point of type~4, a result that is due to K.~Kedlaya (see~\cite[Lemma~4.5.14]{Kedlayalocalglobal}).
\end{remark}

\begin{remark}
Let~$J$ be an interval inside~$X$. One could expect that the restriction of the map~$\bRc_{S}$ to~$J$ is $\log$-linear around every point of type~3 that does not belong to~$S$. We know how to prove it for~$\Rc_{S,1}$ or when no radius is solvable but the general case seems trickier. It is related to the general question of super-harmonicity for the partial heights of the convergence Newton polygon (\textit{i.e.} the sum of the logarithms of the first $j$ slopes for varying~$j$), which is open (see~\cite[Remark~2.4.10]{finiteness4} for the description of an hypothetical situation where it would fail).

%To say a little more, let us assume that~$\Gamma_{S}$ meets every connected component of~$X$. In this case, it is easy to check that there exists a smallest graph~$\Gamma$ that satisfies the properties of Theorem~\ref{thm:continuousandfinite}. Moreover, if we assume that~$\Gamma_{S}$ has no end-points of type~3, then we can prove that~$\Gamma$ has no end-points of type~3 either. Beware that we really deal with the graph associated to the multi-radius~$\bRc_{S}$ and not some individual~$\Rc_{S,i}$. We refer to~\cite{finiteness3} for more on those questions.

Nevertheless, we are able to prove that if~$\Gamma_{S}$ has no end-points of type~3, then the smallest graph~$\Gamma$ that contains~$\Gamma_{S}$ and satisfies the properties of Theorem~\ref{thm:continuousandfinite} (for the multi-radius~$\bRc_{S}$ and not some individual~$\Rc_{S,i}$) exists and has no end-points of type~3 either. We refer to~\cite{finiteness3} for more on this issue.
\end{remark}

Let us now turn to the proof of Theorem~\ref{thm:continuousandfinite}. By Lemma~\ref{lem:basechange}, it is enough to prove the result after a scalar extension. \textit{From now on, we will assume that~$K$ is algebraically closed.}

\subsection{A geometric result}\label{sec:geometry}

The overall strategy of our proof is the following. By definition of a weak triangulation, the set $X\setminus S$ is a union of open discs and annuli and, for those, we may use the results of the second author (see Theorem~\ref{thm:Andrea}). We still need to investigate what happens around the points of the triangulation~$S$. To carry out this task, we will write the curve~$X$, locally around those points, as a finite \'etale cover of a subset of the affine line, consider the push-forward of $(\Fs,\nabla)$, which is well understood thanks to Theorem~\ref{thm:Andrea} again, and relate its radii of convergence to the original radii.

We will need to find \'etale morphisms from open subsets of~$X$ to the affine line that satisfy nice properties. The main result we use has been adapted from the proof of~\cite[Th\'eor\`eme~4.4.15]{RSSen}. For the convenience of the reader, we have decided to sketch the proof of the whole result. 

In what follows, we use A.~Ducros's notion of ``branch'' (see~\cite[\S1.7]{RSSen}), which roughly corresponds to that of a direction out of a point. More precisely, for every open subtree~$V$ of~$X$ containing a point~$x$, there is a bijection between the set of branches out of~$x$ and the set of connected components of~$V\setminus\{x\}$. Such a connected component is called a section of the corresponding branch. Every branch admits a section that is isomorphic to an open annulus.

There are well-defined notions of direct and inverse images of branches that correspond to the intuitive ones. Let $\varphi\colon X \to Y$ be a morphism of curves and let~$x\in X$ be a point such that~$\varphi^{-1}(\varphi(x))$ is finite. Then the image of a branch out of~$x$ is a branch out of~$\varphi(x)$ and the preimage of a branch out of~$\varphi(x)$ is a union of branches out of~$x$.

Let~$x$ be a point of~$X$ of type~2 and consider the complete valued field~$\Hs(x)$ associated to it. Since~$\wti{K}$ is algebraically closed, the residue field~$\wti{\Hs(x)}$ is the function field of a unique projective smooth connected curve~$\Cs$ over~$\wti{K}$, called the residual curve (see~\cite[3.3.5.2]{RSSen}). If $x$ lies in the interior of~$X$, the closed points of~$\Cs$ correspond bijectively to the branches out of the point~$x$ (see~\cite[4.2.11.1]{RSSen}).

We say that a property holds for almost every element of a set~$E$ if it holds for all but finitely many of them.

\begin{theorem}\label{thm:bonvois}
Let~$x$ be a point of~$X$ of type~2. Let~$b_{1},\dotsc,b_{t},c$ be distinct branches out of~$x$. Let~$N$ be a positive integer.  There exists an affinoid neighbourhood~$Z$ of~$x$ in~$X$, a quasi-smooth affinoid curve~$Y$, an affinoid domain~$W$ of~$\P^{1,\textrm{an}}_{K}$ and a finite \'etale map $\psi \colon Y \to W$ such that
\begin{enumerate}
\item $Z$ is isomorphic to an affinoid domain of~$Y$ and~$x$ lies in the interior of~$Y$;
\item\label{i:deg} the degree of~$\psi$ is prime to~$N$;
\item\label{i:pointx} $\psi^{-1}(\psi(x))=\{x\}$;
\item\label{i:compx} almost every connected component of $Y\setminus\{x\}$ is an open unit disc with boundary~$\{x\}$;
\item\label{i:compfx} almost every connected component of $W\setminus\{\psi(x)\}$ is an open unit disc with boundary~$\{\psi(x)\}$;
\item\label{i:compiso} for almost every connected component~$V$ of $Y\setminus\{x\}$, the induced morphism $V \to \psi(V)$ is an isomorphism;
\item\label{i:isobi} for every $i \in \cn{1}{t}$, the morphism~$\psi$ induces an isomorphism between a section of~$b_{i}$ and a section of~$\psi(b_{i})$ and we have $\psi^{-1}(\psi(b_{i})) \subseteq Z$;
\item\label{i:branchec} $\psi^{-1}(\psi(c)) = \{c\}$. 
\end{enumerate}

\end{theorem}
\begin{proof}
By Lemma~\ref{lem:algebrisation}, there exists an affinoid neighbourhood~$Z$ of~$x$ in~$X$ and a smooth affine algebraic curve~$\Xs$ over~$K$ such that~$V$ identifies to an affinoid domain of~$\Xs^\an$. We may assume that~$\Xs$ is projective and connected. Let~$\Cs$ be the residual curve at the point~$x$. Let~$g$ be a rational function on~$\Cs$ that induces a generically \'etale morphism $\Cs \to \P^1_{\wti{K}}$. Let~$f$ be a rational function on~$\Xs$ such that $|f(x)|=1$ and $\wti{f} = g$. Let $f^\an \colon \Xs^\an \to \P^{1,\textrm{an}}_{K}$ be the associated morphism.

%By Lemma~\ref{lem:algebrisation}, there exists an affinoid neighbourhood~$Z$ of~$x$ in~$X$ and a smooth connected projective $K$-algebraic curve~$\Xs$ such that~$V$ identifies to an affinoid domain of~$\Xs^\an$. Let~$\Cs$ be the residual curve at the point~$x$. Let~$g$ be a rational function on~$\Cs$ that induces a generically \'etale morphism $\Cs \to \P^1_{\wti{K}}$. Let~$f$ be a rational function on~$\Xs$ such that $|f(x)|=1$ and $\wti{f} = g$. Let $f^\an \colon \Xs^\an \to \P^{1,\textrm{an}}_{K}$ be the associated morphism.

Let us first remark that, for almost every connected component~$V$ of $\Xs^\an\setminus\{x\}$, $V$ meets a unique branch out of~$x$ and $f^\an(V)$ is a connected component of $\P^{1,\an}_{K} \setminus\{f^\an(x)\}$.

Since the map induced by~$g$ is generically \'etale, for almost every connected component~$V$ of $\Xs^\an\setminus\{x\}$, it is unramified at the closed point of~$\Cs$ corresponding to the branch associated to~$V$. From this we deduce that the morphism $V \to f^\an(V)$ induced by~$f^\an$ has degree~1 (see~\cite[Th\'eor\`eme~4.3.13]{RSSen}), hence is an isomorphism. 

Finally, choose an affinoid neighbourhood~$W$ of~$f^\an(x)$ in~$\P^{1,\textrm{an}}_{K}$ such that the different points of~$(f^\an)^{-1}(f^\an(x))$ belong to different connected components of~$(f^\an)^{-1}(W)$. Let~$Y$ be the connected component containing~$x$ and $\psi \colon Y \to W$ be the induced morphism. Properties~(\ref{i:pointx}) and~(\ref{i:compiso}) are satisfied by construction. Since~$\psi(x)$ is a point of type~2, property~(\ref{i:compfx}) is clear. Property~(\ref{i:compx}) follows.

For any~$i\in\cn{1}{t}$, let~$\wti{b}_{i}$ be the closed point of the residue curve~$\Cs$ associated to the branch~$b_{i}$. Let~$\wti{c}$ be the closed point associated to the branch~$c$. Finally, let $\wti{a}_{1},\dotsc,\wti{a}_{u}$ be the closed points associated to the branches of~$\Xs$ out of~$x$ that do not belong to~$X$. In the lemma below, we will prove that there exists a rational function~$g$ as above whose degree~$d$ is prime to~$N$, that has a simple zero at every~$\wti{b}_{i}$, a unique pole at~$\wti{c}$ and takes the value~1 at every~$\wti{a}_{j}$. Property~(\ref{i:branchec}) then follows from the link between branches out of a point and closed points of the residue curve. The first part of property~(\ref{i:isobi}) follows from~\cite[Th\'eor\`eme~4.3.13]{RSSen}, as above. The second follows from the fact that $g^{-1}(g(\wti{b}_{i})) = g^{-1}(0)$ contains none of the~$\wti{a}_{j}$'s. Moreover, if we assume that the set of branches~$b_{i}$ is non-empty, which we can always do, then the map~$g$ has a simple zero, which forces it to be generically \'etale.

As regards the degree of~$\psi$, note that we have $[\wti{\Hs(x)}\colon\wti{\Hs(\psi(x))}] = \deg(g) = d$. Since~$K$ is algebraically closed and~$x$ is of type~2, we have $|\Hs(x)^\ast| = |\Hs(\psi(x))^\ast| = |K^\ast|$. By~\cite[Corollary~6.3.6]{stablemodification} or~\cite[Th\'eor\`eme~4.3.14]{RSSen}, the field~$\Hs(\psi(x))$ is stable, hence
\[ [\Hs(x) \colon \Hs(\psi(x))] = [\wti{\Hs(x)} \colon \wti{\Hs(\psi(x))}] \, [|\Hs(x)^\ast| \colon |\Hs(\psi(x))^\ast|] = d.\]
By Lemma~\ref{lem:degree}, the degree of~$\psi$ at~$x$ is~$d$. Since~$\psi^{-1}(\psi(x)) = \{x\}$, the degree of~$\psi$ itself is also~$d$.

%Finally, the case of a general quasi-smooth curve~$X$ reduces to this one by Lemma~\ref{lem:algebrisation}.

%since there exists an affinoid neighbourhood of~$x$ in~$X$ that may be embedded in the analytification of a smooth projective algebraic curve~$\Xs$.
\end{proof}

\begin{lemma}
Let~$C$ be a projective smooth connected curve over a field~$k$. Let $x_{1},\dotsc,x_{t},y,z_{1},\dotsc,z_{u}$ be distinct closed points of~$C$. Let~$N,n_{1},\dotsc,n_{t}$ be positive integers. There exists a rational function~$g$ on~$C$ with degree prime to~$N$ such that
\begin{enumerate}
\item for every $i\in\cn{1}{t}$, $g$ has a zero of order~$n_{i}$ at~$x_{i}$;
\item for every $j\in\cn{1}{u}$, $g$ takes the value~1 at~$z_{j}$;
\item $g$ has a unique pole, which lies at~$y$.
\end{enumerate}
\end{lemma}
\begin{proof}
Let~$d$ be a positive integer. Consider the divisors 
\[D = (d-1)(y) - (n_{1}+1)(x_{1}) - \dotsb - (n_{t}+1)(x_{t}) - (z_{1}) - \dotsb - (z_{u})\]
and
\[D' = d(y) - n_{1}(x_{1}) - \dotsb - n_{t}(x_{t})\]
on~$C$. The cokernel of the natural injection $\Os(D)\to \Os(D')$ is a skyscraper sheaf~$\Gs$ supported on $\{x_{1},\dotsc,x_{t},y,z_{1},\dotsc,z_{u}\}$. Let~$s$ be the global section of~$\Gs$ that takes the value~1 at every point of the support.

Choose~$d$ big enough so as to have $H^1(C,\Os(D))=0$. We may also assume that~$d$ is prime to~$N$. Then we have an exact sequence
\[0 \to H^0(C,\Os(D)) \to H^0(C,\Os(D')) \to H^0(C,\Gs) \to 0.\]
Let~$s'$ be an element of $H^0(C,\Os(D'))$ that lifts~$s$. The associated rational function satisfies the required properties.
\end{proof}

%\begin{lemma}
%Let~$C$ be a projective smooth connected curve over a field~$k$. Let $x_{1},\dots,x_{t},y$ be distinct closed points of~$C$. Let~$N,n_{1},\dotsc,n_{t}$ be positive integers. There exists a rational function~$g$ on~$C$ with degree prime to~$N$ such that
%\begin{enumerate}
%\item for any $i\in\cn{1}{t}$, $g$ has a zero of order~$n_{i}$ at~$x_{i}$;
%\item $g$ has a unique pole, which lies at~$y$.
%\end{enumerate}
%\end{lemma}
%\begin{proof}
%Let~$d$ be a positive integer. Consider the divisors 
%\[D = (d-1)(y)-(n_{1}+1)(x_{1})-\dotsb-(n_{t}+1)(x_{t})\]
%and
%\[D' = d(y)-n_{1}(x_{1})-\dotsb-n_{t}(x_{t})\]
%on~$C$. The cokernel of the natural injection $\Os(D)\to \Os(D')$ is a skyscraper sheaf~$\Gs$ supported on $\{x_{1},\dotsc,x_{t},y\}$. Let~$s$ be a global section of~$\Gs$ that vanishes at no point of the support.
%
%Choose~$d$ big enough so as to have $H^1(C,\Os(D))=0$. We may also assume that~$d$ is prime to~$N$. Then we have an exact sequence
%\[0 \to H^0(C,\Os(D)) \to H^0(C,\Os(D')) \to H^0(C,\Gs) \to 0.\]
%Let~$s'$ be an element of $H^0(C,\Os(D'))$ that lifts~$s$. The associated rational function satisfies the required properties.
%\end{proof}

\subsection{Proof of the finiteness property}\label{sec:prooffinite}

In this section, we will prove that the map~$\bRc_{S}(\wc,(\Fs,\nabla))$ is locally constant outside a locally finite subgraph~$\Gamma$ of~$X$.

By definition of a triangulation, $X\setminus S$ is a union of open discs and annuli, each of which may be handled by Theorem~\ref{thm:Andrea}. We still need to investigate the behaviour of the multiradius around the points of the triangulation. Let us first remark that, as far as the finiteness property is concerned, it is harmless to change triangulations.

\begin{lemma}\label{lem:change}
Let~$S$ and~$S'$ be two weak triangulations of~$X$. There exists a locally finite subgraph~$\Gamma$ of~$X$ outside which the map $\bRc_{S}(\wc,\Fs)$ is locally constant if, and only if, there exists a locally finite subgraph~$\Gamma'$ of~$X$ outside which the map $\bRc_{S'}(\wc,\Fs)$ is locally constant.
\end{lemma}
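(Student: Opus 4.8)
The plan is to reduce the statement to a local comparison between the two convergence functions, using the fact that both $S$ and $S'$ are contained in a common refinement. First I would observe that it suffices to treat the case $S \subseteq S'$: indeed, $S \cup S'$ is again a weak triangulation of $X$ (local finiteness is preserved by finite unions of locally finite sets, and adding points of type~2 or~3 only subdivides the virtual discs and annuli of $X \setminus S$, each piece of which remains a virtual disc or annulus), so by transitivity it is enough to compare $\bRc_{S}$ with $\bRc_{S \cup S'}$ and $\bRc_{S'}$ with $\bRc_{S \cup S'}$ separately.

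So assume $S \subseteq S'$. The key input is formula~\eqref{eq:rhoS'S}, which expresses, for each $i$,
\[
\Rc_{S',i}(x,(\Fs,\nabla)) = \min\!\left(\frac{\Rc_{S,i}(x,(\Fs,\nabla))}{\rho_{S',S}(x)},\, 1\right),
\]
where $\rho_{S',S}\colon X \to (0,1]$ is the ratio function introduced before~\eqref{eq:rhoS'S}. Since this identity is symmetric in the sense that $\bRc_{S'}$ is obtained from $\bRc_{S}$ by dividing coordinatewise by $\rho_{S',S}$ and truncating at $1$, and conversely $\Rc_{S,i} = \rho_{S',S}\cdot\Rc_{S',i}$ on the locus where $\Rc_{S',i}<1$ (with $\Rc_{S,i}\ge \rho_{S',S}$ elsewhere), the multiradius $\bRc_{S}$ is locally constant at a point $x$ outside $\Gamma_{S}$ as soon as both $\bRc_{S'}$ and $\rho_{S',S}$ are locally constant near $x$, and vice versa. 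Concretely, I would argue: given a locally finite subgraph $\Gamma$ outside which $\bRc_{S}$ is locally constant, I claim $\bRc_{S'}$ is locally constant outside $\Gamma' := \Gamma \cup \Gamma_{S'}$. Since $\Gamma_{S'}$ is a locally finite graph (it is the skeleton of the weak triangulation $S'$) and a finite union of locally finite subgraphs is a locally finite subgraph, $\Gamma'$ is a locally finite subgraph of $X$. At a point $x \notin \Gamma'$, in particular $x \notin \Gamma_{S'}$, so $x$ lies in a connected component $C'$ of $X \setminus \Gamma_{S'}$, which is a virtual open disc; on $C'$ the disc $D(\ti x, S'_{L})$ is the whole component (remark~\ref{rem:defsimple}), so $\rho_{S',S}$ restricted to $C'$ is the ratio of the radius of $C'$-component to the radius of the $D(\ti x,S_{L})$-component, a function depending only on the point of $C'$ through the embedding $C' \hookrightarrow D(\ti x,S_{L}) \simeq D_{L}^-(0,R)$; by lemma~\ref{lem:isometry} this ratio is locally constant on $C'$ wherever it is defined, and more directly it is actually constant on $C'$ because $D(\ti x,S_{L})$ does not change as $\ti x$ varies over $C'$. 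Hence $\rho_{S',S}$ is locally constant near $x$, and since $x \notin \Gamma$ the function $\bRc_{S}$ is too; by the displayed formula, so is $\bRc_{S'}$.

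The reverse implication is identical after swapping the roles of $S$ and $S'$ (using $S \subseteq S'$ again, but now reading the formula in the other direction, noting that $\Rc_{S,i}$ is recovered from $\Rc_{S',i}$ and $\rho_{S',S}$ by a continuous, indeed locally-$\log$-affine, operation). Finally I would dispose of the general case by the reduction of the first paragraph: passing through $S \cup S'$, one gets a locally finite subgraph for $\bRc_{S \cup S'}$ from one for $\bRc_{S}$, then one for $\bRc_{S'}$ from that. The only point requiring a little care — and the place I expect the minor technical friction — is checking that $\rho_{S',S}$ is genuinely locally constant (not merely continuous) on each component of $X \setminus \Gamma_{S'}$, together with confirming that $X \setminus (S \cup S')$ still has only virtual-disc-or-annulus components so that $S \cup S'$ is a legitimate weak triangulation; both are straightforward from the definitions and remark~\ref{rem:defsimple}, but they are what makes the argument go through cleanly.
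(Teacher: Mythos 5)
Your reduction to the case $S\subseteq S'$ and your treatment of the direction ``a graph for $\bRc_{S}$ yields a graph for $\bRc_{S'}$'' are fine: formula~(\ref{eq:rhoS'S}) expresses $\Rc_{S',i}$ as a function of $\Rc_{S,i}$ and $\rho_{S',S}$, and $\rho_{S',S}$ is indeed constant on each connected component of $X\setminus\Gamma_{S'}$, so local constancy passes from $\bRc_{S}$ to $\bRc_{S'}$ outside $\Gamma\cup\Gamma_{S'}$. (The paper dismisses this direction as clear.)

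The gap is in the reverse implication, which you declare ``identical after swapping the roles.'' It is not: the operation in (\ref{eq:rhoS'S}) involves the truncation $\min(\,\cdot\,,1)$, so $\Rc_{S,i}$ is \emph{not} recovered from $\Rc_{S',i}$ and $\rho_{S',S}$ on the locus where $\Rc_{S',i}=1$ --- there you only know $\Rc_{S,i}\ge\rho_{S',S}$, and a priori $\Rc_{S,i}$ could vary in $[\rho_{S',S},1]$ while $\Rc_{S',i}$ stays identically $1$. You even record this caveat (``with $\Rc_{S,i}\ge\rho_{S',S}$ elsewhere'') but never close it. The paper's proof supplies exactly the missing ingredient: on a connected component $U$ of $X\setminus\Gamma'$ (with $\Gamma'\supseteq\Gamma_{S'}$), on which $\rho_{S',S}$ takes the constant value $\rho$, one splits into two cases. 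If $\Rc_{S,i}<\rho$ everywhere on $U$, then $\Rc_{S,i}=\rho\,\Rc_{S',i}$ is constant. If instead $\Rc_{S,i}(x)\ge\rho$ for some $x\in U$, one argues \emph{directly} that $\Rc_{S,i}$ is constant on $U$: since $U$ is contained in an open disc of relative radius $\rho$ inside $D(\ti x,S_{L})$, the maximal disc of convergence at $\ti x$ contains that whole subdisc, and by the ultrametric re-centering of discs the same maximal disc works at every other point of $U$, forcing equality of the radii. Without this transfer-type argument the hard direction of the equivalence does not follow from (\ref{eq:rhoS'S}) alone, so you need to add this case distinction to complete the proof.
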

\begin{proof}
It is possible to construct a triangulation~$S''$ that contains both~$S$ and~$S'$. Hence we may assume that $S \subset S'$. In this case, Formula~(\ref{eq:rhoS'S}) shows that the property for~$S$ implies the property for~$S'$.

Let us assume that there exists a locally finite subgraph~$\Gamma'$ of~$X$ outside which the map $\bRc_{S'}(\wc,\Fs)$ is locally constant. We may assume that~$\Gamma'$ contains~$\Gamma_{S'}$. Let~$U$ be a connected component of~$X\setminus \Gamma'$. It is enough to prove that the map $\bRc_{S}(\wc,\Fs)$ is constant on~$U$. Let~$V$ be the connected component of~$X\setminus \Gamma_{S}$ that contains~$U$. Both~$U$ and~$V$ are discs and the distance function~$\rho_{S,S'}$ (see the paragraph preceding formula~(\ref{eq:rhoS'S})) is constant on~$U$. Let~$\rho$ be its value. Let $r=\mathrm{rk}(\Fs_{|U})$. For any $i\in\cn{1}{r}$ and any $x\in U$, we have
\[\Rc_{S',i}(x,\Fs) = \min \left(\frac{\Rc_{S,i}(x,\Fs)}{\rho},1\right).\]
We now have two cases. Fix $i\in\cn{1}{r}$. If there exists $x\in U$ such that $\Rc_{S,i}(x,\Fs)$ is at least~$\rho$, then $\Rc_{S,i}(\wc,\Fs)$ is constant on~$U$ (which is contained in an open disc of relative radius~$\rho$). Otherwise, the maps $\Rc_{S,i}(\wc,\Fs)$ and $\rho\, \Rc_{S',i}(\wc,\Fs)$ coincide on~$U$, hence both are constant.
\end{proof}

In our study, we will need to restrict the connection to some subspaces. Unfortunately, the multiradius of convergence may vary under this operation. In the following lemma, we gather a few easy cases where the resulting multiradius may be controlled. Recall that we denote by~$x_{c,R}$ the unique point of the Shilov boundary of~$D^+(c,R)$.

\begin{lemma}\label{lem:restrictionhorsGammaS}
Let~$x$ be a point of~$\Gamma_{S}$. Let~$C$ be an open disc or annulus inside~$X\setminus \Gamma_{S}$ such that $\bar{C}\cap \Gamma_{S} =\{x\}$. 
\begin{enumerate}[a)]
\item Assume that~$C$ is an open disc. Then, for any $y\in C$, we have 
\[\bRc_{\emptyset}(y,\Fs_{|C}) = \bRc_{S}(y,\Fs).\]
\item Assume that~$C$ is an open annulus. Identify~$C$ with an annulus $C^-(0,R_{1},R_{2})$, with coordinate~$t$, in such a way that $\lim_{R\to R_{2}} x_{0,R} = x$. Then, for any $y\in C$ and any $i \in \cn{1}{\mathrm{rk}(\Fs_{|C})}$, we have 
\[\Rc_{\emptyset,i}(y,\Fs_{|C}) = \min\left(\frac{R_{2}}{|t(y)|}\, \Rc_{S,i}(y,\Fs), 1\right).\]
Moreover, if $\Rc_{\emptyset,i}(x_{0,R},\Fs_{|C}) = 1$ for all~$R$ close enough to~$R_{2}$, then either $\Rc_{S,i}(\wc,\Fs)=1$ on~$C$ or $\Rc_{S,i}(x_{0,R},\Fs)=R/R_{2}$ for all~$R$ close enough to~$R_{2}$.
\end{enumerate}
\end{lemma}
\begin{proof}
Assume we are in case~(a). The set~$C$ is an open disc and the point~$x$ lies at its boundary. As a consequence, for any complete valued extension~$L$ of~$K$ and any $L$-rational point~$\ti y$ of~$X_{L}$, the disc $D(\ti y,S_{L})$ is equal to~$C_{L}$. In particular, the multiradius of convergence of $\Fs$ on~$C$ only depends on the restriction of $\Fs$ to~$C$ and the result follows.

Assume we are in case~(b). The connected component of~$X\setminus \Gamma_{S}$ that contains~$C$ is an open disc~$D$. We may identify it with $D^-(0,R_{2})$, with coordinate~$t$, in a way that is compatible with the identification of~$C$ and~$C^-(0,R_{1},R_{2})$. By case~(a), restricting the connection to~$D$, endowed with the empty weak triangulation, leaves the radii unchanged. 

Consider the weak triangulation $T = \{x_{0,R_{1}}\}$ of~$D$. Its skeleton is $\Gamma_{T} = \{x_{0,R} \mid R_{1}\le R < R_{2}\}$. Since $\Gamma_{C} = C \cap \Gamma_{T}$ and radii only depend on skeletons, for every $y\in C$, we have 
\[\bRc_{\emptyset}(y,\Fs_{|C}) = \bRc_{T}(y,\Fs_{|D}).\]
We may now apply Formula~(\ref{eq:rhoS'S}) to compute the right-hand side and the result follows.

Let us now prove the final statement. By case~(a), we have $\Rc_{S,i}(\wc,\Fs) = \Rc_{\emptyset,i}(\wc,\Fs_{|D})$ on~$D$. Let us assume that there exists $R'\in (R_{1},R_{2})$ such that $\Rc_{\emptyset,i}(x_{0,R},\Fs_{|C}) = 1$ for all $R\in (R',R_{2})$. This implies that $\Rc_{\emptyset,i}(x_{0,R},\Fs_{|D})  \ge R/R_{2}$ for all $R\in (R',R_{2})$. If the latter is an equality everywhere, we are done.

Otherwise, there exists~$R'' \in (R',R_{2})$ such that $\Rc_{\emptyset,i}(x_{0,R''},\Fs_{|D}) > R''/R_{2}$. We may then write this radius in the form~$r/R_{2}$, with $r\in (R'',R_{2}]$. This means that the module~$\Fs$ has a trivial submodule of corank~$i-1$ on~$D^-(0,r)$, hence $\Rc_{\emptyset,i}(\wc,\Fs_{|D})=r/R_{2}$ on~$D^-(0,r)$. If~$r=R_{2}$, we deduce that $\Rc_{\emptyset,i}(\wc,\Fs_{|D})  = 1$ on~$D$. If~$r<R_{2}$, then we have $\Rc_{\emptyset,i}(x_{0,R},\Fs_{|D})  = R/R_{2}$ for every $R\in [r,R_{2})$. Indeed, if it were not the case, then we would be able to repeat the previous argument with some $R''' \in (r,R_{2})$ and show that $\Rc_{\emptyset,i}(\wc,\Fs_{|D}) = s/R_{2}$, with $s\in (r,R_{2}]$, in the neighbourhood of~0 in~$D$. This is a contradiction.
\end{proof}

\begin{lemma}\label{lem:restrictionGammaS}
Let~$C$ be an open annulus inside~$X\setminus S$ such that $\Gamma_{C} \cap \Gamma_{S} \ne \emptyset$. Then, we have $\Gamma_{C} = C \cap \Gamma_{S}$ and, for any $y\in C$, \[\bRc_{\emptyset}(y,\Fs_{|C}) = \bRc_{S}(y,\Fs).\]
\end{lemma}
\begin{proof}
Let~$C'$ be the connected component of~$X\setminus S$ that contains~$C$. Since $\Gamma_{C} \cap \Gamma_{S} \ne \emptyset$, $C'$~is an annulus and we have $\Gamma_{C'} = C'\cap \Gamma_{S}$. An inclusion of annuli whose skeletons meet induces an inclusion between their skeletons and we deduce that $\Gamma_{C} = C \cap \Gamma_{S}$. The result is now proved as case~(a) of Lemma~\ref{lem:restrictionhorsGammaS} using Remark~\ref{rem:defsimple}.
\end{proof}

Using Theorem~\ref{thm:bonvois}, we now prove a kind of generic finiteness of the multiradius around a point of the triangulation that is of type~2.

\begin{proposition}\label{prop:locfinite}
Let~$x$ be a point of~$S$ of type~2. There exists an affinoid domain~$Y_{x}$ of~$X$ such that $Y_{x} \cap S =\{x\}$, every connected component of~$Y_{x}\setminus\{x\}$ is an open disc and, for every $i\in\cn{1}{r}$, 
the map $\Rc_{S,i}(\wc,\Fs)_{|Y_{x}}$ is locally constant outside a finite subgraph~$\Gamma_{x}$ of~$Y_{x}$ that contains~$x$.
\end{proposition}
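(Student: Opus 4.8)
The plan is to use Theorem~\ref{thm:bonvois} to realise a neighbourhood of~$x$ as a finite \'etale cover of an affinoid domain of the affine line, and then to pull back the finiteness statement of Theorem~\ref{thm:Andrea}. First I would apply Theorem~\ref{thm:bonvois} to obtain an affinoid neighbourhood~$Y$ of~$x$ in~$X$, an affinoid domain~$W\subset\P^{1,\an}_K$ and a finite \'etale map~$\psi:Y\to W$ with $\psi^{-1}(\psi(x))=\{x\}$. By shrinking $Y$ if necessary, I may assume that $Y\cap S=\{x\}$: indeed $S$ is locally finite and only meets~$Y$ in points of type~2 or~3, so a small enough affinoid neighbourhood of~$x$ will contain no other point of~$S$. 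Set $Y_x=Y$. Since $W$ is an affinoid domain of the affine line, Theorem~\ref{thm:Andrea} applies to the push-forward connection $(\Gs,\Mnabla):=\psi_*(\Fs,\nabla)_{|Y_x}$ (a locally free $\Os_W$-module of finite type with integrable connection, $\psi$ being finite \'etale): there is a finite subgraph $\Gamma_W$ of~$W$ containing the chosen triangulation of~$W$ such that $\bRc_{S_W}(\cdot,(\Gs,\Mnabla))$ is locally constant outside~$\Gamma_W$. Choose the triangulation of~$W$ to contain $\psi(x)$.

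The next step is to relate the embedded radii of $(\Fs,\nabla)$ on~$Y_x$ to those of the push-forward on~$W$. The key geometric input from Theorem~\ref{thm:bonvois}\textit{ii)}--\textit{iv)} is that for almost every connected component~$V$ of $Y_x\setminus\{x\}$, $V$ is an open unit disc with boundary~$\{x\}$, its image $\psi(V)$ is an open unit disc with boundary~$\{\psi(x)\}$, and $\psi$ restricts to an isomorphism $V\xrightarrow{\sim}\psi(V)$. On such a component, $\psi$ identifies the disc $D(\ti y,S_L)$ centred at a rational point $\ti y$ over $y\in V$ with the corresponding disc over $\psi(y)$, up to the multiplicative distortion controlled by Lemma~\ref{lem:isometry} (which is constant on each disc). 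Hence, on each such good component, $\bRc_S(\cdot,(\Fs,\nabla))$ agrees, up to a fixed rescaling, with a suitable component of $\bRc_{S_W}(\psi(\cdot),(\Gs,\Mnabla))$ — here one uses that the solutions of $(\Gs,\Mnabla)=\psi_*(\Fs,\nabla)$ on a disc where $\psi$ is an isomorphism are precisely the solutions of $(\Fs,\nabla)$ transported by $\psi$, so the multiradii match after reindexing (the rank of $\Gs$ is $d\cdot r$ where $d=\deg\psi$, but over a component on which $\psi$ is an isomorphism the relevant solutions are the $r$ coming from that component). On the finitely many exceptional components, and on the finitely many points of $Y_x$ where the structure of $Y_x\setminus\{x\}$ or of $W\setminus\{\psi(x)\}$ degenerates, I do not attempt to control things directly: instead I enlarge the graph.

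Concretely, I would define $\Gamma_x$ to be the union of: the point~$x$; the (finitely many) exceptional components of $Y_x\setminus\{x\}$, together with their skeletons suitably truncated to make them into a finite subgraph; the preimage $\psi^{-1}(\Gamma_W)$ intersected with the good components; and enough of the skeleton $\Gamma_S\cap Y_x$ to glue everything. Since $\psi$ is finite, $\psi^{-1}(\Gamma_W)$ is a finite subgraph of~$Y_x$, and adding finitely many exceptional components (each relatively compact once truncated) keeps it a finite subgraph containing~$x$. Outside $\Gamma_x$, every point $y$ lies in a good component~$V$, in the part of~$V$ lying over $W\setminus\Gamma_W$; there $\bRc_S(y,(\Fs,\nabla))$ is obtained from the locally constant function $\bRc_{S_W}(\cdot,(\Gs,\Mnabla))$ by composition with the local isomorphism~$\psi$ and a fixed rescaling, hence is locally constant. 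Finally I must also use Lemma~\ref{lem:change} to pass between the triangulation $S$ and the one making each good component an honest disc with boundary $\{x\}$, and formula~\eqref{eq:rhoS'S} / Lemma~\ref{lem:restriction} to track the rescaling precisely.

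The main obstacle I expect is the bookkeeping in the middle step: correctly matching the \emph{indexed} radii $\Rc_{S,i}$ of $(\Fs,\nabla)$ with those of the push-forward $\psi_*(\Fs,\nabla)$ across a finite \'etale map. On a disc where $\psi$ is an isomorphism the push-forward decomposes as a direct sum indexed by the sheets, and one must argue that, locally, the $i$-th radius of the summand corresponding to the sheet through~$y$ is what controls $\Rc_{S,i}(y,(\Fs,\nabla))$, while the other summands contribute radii that are locally constant (being pulled back from neighbouring, disjoint components). Handling the finitely many ``bad'' branches — where $\psi$ ramifies or several sheets collide, i.e. the branches out of~$x$ not covered by \textit{iv)} — is where one genuinely needs to throw those components into $\Gamma_x$ rather than analyse them, which is why the statement only claims local constancy \emph{outside} a finite subgraph.
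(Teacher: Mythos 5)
Your overall strategy is the paper's: realise the curve near~$x$ as a finite \'etale cover $\psi: Y_{x}\to W_{x}$ via theorem~\ref{thm:bonvois}, push the connection forward, and apply theorem~\ref{thm:Andrea} on~$W_{x}$. But there are two genuine gaps. The first is your treatment of the exceptional branches: you cannot ``throw those components into $\Gamma_{x}$'', because a connected component of $Y_{x}\setminus\{x\}$ is an open disc (or worse), not a finite subgraph, and the statement requires $\Gamma_{x}$ to be one. The correct move is to \emph{remove} the finitely many bad branches from~$Y_{x}$ and~$W_{x}$ altogether, so that conditions \textit{ii)}--\textit{iv)} of theorem~\ref{thm:bonvois} hold for \emph{every} remaining component; this is precisely why the proposition only asks that $Y_{x}$ be an affinoid domain with $Y_{x}\cap S=\{x\}$ rather than a neighbourhood of~$x$, the deleted branches being handled separately in proposition~\ref{prop:finitebranch}.

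The second, more serious, gap is the step you yourself flag as the main obstacle: deducing local constancy of $\bRc_{S}(\cdot,(\Fs,\nabla))$ on a good component from local constancy of $\bRc_{T}(\psi(\cdot),\psi_{*}(\Fs,\nabla))$. Your proposed resolution is circular: the ``other summands'' of the push-forward over $\psi(V)$ are the push-forwards of $(\Fs,\nabla)$ from the \emph{other} preimage components $V_{2},\dots,V_{d}$, and their radii are exactly the quantities whose local constancy is being proved, so you cannot assume they ``contribute radii that are locally constant''. The paper avoids all index-matching: by the splitting of $\psi'_{*}$ into a direct sum over the sheets and \cite[proposition~5.5]{finiteness}, every entry of the $r$-tuple $\bRc_{S}(y,(\Fs,\nabla))$ occurs among the entries of the $dr$-tuple $\bRc_{T}(\psi(y),\psi_{*}(\Fs,\nabla))$, which is \emph{constant} on $\psi(U)$ for any connected open $U\subset Y_{x}\setminus\psi^{-1}(\Gamma_{x})$; hence each $\Rc_{S,i}$ takes finitely many values on~$U$. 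Since $U$ lies in an open disc or annulus, $\Rc_{S,i}$ is also continuous on~$U$ by corollary~\ref{cor:locfiniteopendisc}, and a continuous finitely-valued function on a connected set is constant. This continuity-plus-finite-values argument is the missing idea (an honest index identification is only established later, in lemma~\ref{lem:radiusetale}, under extra hypotheses). A minor point: the rescaling via lemma~\ref{lem:isometry} that you track is vacuous here, since the radii of definition~\ref{def:radius} are relative and hence invariant under the isomorphisms $V\xrightarrow[]{\sim}\psi(V)$; lemma~\ref{lem:restriction}, case~a, is all that is needed.
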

\begin{proof}
Let us consider a finite \'etale map $\psi \colon Y_{x} \to W_{x}$ as in Theorem~\ref{thm:bonvois} (with no~$b_{i}$'s nor~$c$ and $N=1$). Remark that almost every branch out of~$x$ belongs to~$X$. It is possible to restrict~$W_{x}$ and~$Y_{x}$ by removing a finite number of connected components of~$W_{x}\setminus\{\psi(x)\}$ and~$Y_{x}\setminus\{x\}$ in order to assume that~$Y_{x}$ is an affinoid domain of~$X$, that $S\cap Y_{x} =\{x\}$ and that properties~(\ref{i:compx}), (\ref{i:compfx}) and~(\ref{i:compiso}) of Theorem~\ref{thm:bonvois} hold for every, and not only almost every, connected component that appears in their statements. Beware that~$Y_{x}$ will no longer be a neighbourhood of~$x$. 

%Let us consider a finite \'etale map $\psi \colon Y_{x} \to W_{x}$ as in Theorem~\ref{thm:bonvois} (with no~$b_{i}$'s nor~$c$ and $N=1$). It is possible to restrict~$W_{x}$ and~$Y_{x}$ by removing a finite 
%number of connected components of~$W_{x}\setminus\{\psi(x)\}$ and~$Y_{x}\setminus\{x\}$ in order to assume that $S\cap Y_{x} =\{x\}$ and that properties~(\ref{i:compx}), (\ref{i:compfx}) and~(\ref{i:compiso}) of Theorem~\ref{thm:bonvois} hold for every, and not only almost every, connected component that appears in their statements. Beware that~$Y_{x}$ will no longer be a neighbourhood of~$x$. 

Since~$\psi$ is finite \'etale, we may consider the push-forward $\psi_{*}(\Fs,\nabla)_{|Y_{x}}$ of $(\Fs,\nabla)_{|Y_{x}}$ to~$W_{x}$. By property~(\ref{i:compfx}), the subset $T = \{\psi(x)\}$ of~$W_{x}$ is a weak triangulation of~$W_{x}$.

Let~$V$ be a connected component of $Y_{x}\setminus\{x\}$. By property~(\ref{i:compiso}), its image~$\psi(V)$ is a connected component of $W_{x}\setminus\{\psi(x)\}$, hence an open disc. Let $V' = \psi^{-1}(\psi(V))$ and let $V_{1}=V,V_{2},\dotsc,V_{d}$ be its connected components. By properties~(\ref{i:compx}) and~(\ref{i:compiso}), for every~$i \in \cn{1}{d}$, $V_{i}$ is a disc and the induced morphism $\psi_{i} \colon V_{i} 
\to \psi(V)$ is an isomorphism. By case~(a) of Lemma~\ref{lem:restrictionhorsGammaS}, for every~$y$ in~$V'$, we have
\[\bRc_{S}(y,\Fs) =  \bRc_{\emptyset}(y,\Fs_{|V'})\] 
and, for every~$z$ in~$\psi(V)$, 
\[\bRc_{T}(z,\psi_{*}\Fs_{|Y_{x}}) =  \bRc_{\emptyset}(z,(\psi_{*}\Fs_{|Y_{x}})_{|\psi(V)}) = \bRc_{\emptyset}(z,\psi'_{*}(\Fs_{|V'})),\]
where we denote by $\psi' \colon V' \to \psi(V)$ the induced morphism.

%Let~$V$ be a connected component of $Y_{x}\setminus\{x\}$. By property~(\ref{i:compiso}), its image~$\psi(V)$ is a connected component of $W_{x}\setminus\{\psi(x)\}$, hence an open disc. Let $V' = \psi^{-1}(\psi(V))$ and let $V_{1}=V,V_{2},\dotsc,V_{d}$ be its connected components. By properties~(\ref{i:compx}) and~(\ref{i:compiso}), for every~$i \in \cn{1}{d}$, $V_{i}$ is a disc and the induced morphism $\psi_{i} \colon V_{i} 
%\to \psi(V)$ is an isomorphism. By case~(a) of Lemma~\ref{lem:restrictionhorsGammaS}, we have
%\[\forall y \in V', \bRc_{S}(y,\Fs) =  \bRc_{\emptyset}(y,\Fs_{|V'})\] 
%and 
%\[\forall z \in \psi(V), \bRc_{T}(z,\psi_{*}\Fs_{|Y_{x}}) =  \bRc_{\emptyset}(z,(\psi_{*}\Fs_{|Y_{x}})_{|\psi(V)}) = \bRc_{\emptyset}(z,\psi'_{*}(\Fs_{|V'})),\]
%where we denote by $\psi' \colon V' \to \psi(V)$ the induced morphism.

Since~$\psi'$ is a trivial cover of degree~$d$, the situation is simple. Actually, the module $\psi'_{*}(\Fs_{|V'})$ over~$\psi(V)$ splits 
as $\bigoplus_{1\le i\le d} {\psi_{i}}_{*}(\Fs_{|V_{i}})$. By~\cite[Proposition~5.1.1]{finiteness}, we deduce that, for every $y\in V$, every component of the multiradius of 
convergence $\bRc_{S}(y,\Fs)$ appears in $\bRc_{T}(\psi(y),\psi_{*}\Fs_{|Y_{x}})$.

By Theorem~\ref{thm:Andrea}, the map $\bRc_{T}(\wc,\psi_{*}\Fs_{|Y_{x}})$ is locally constant outside a finite 
subgraph~$\Gamma_{x}$ of~$W_{x}$ that contains~$x$. This finite graph only meets a finite number of connected components $U_{1},\dotsc,U_{n}$ of~$W_{x}\setminus\{x\}$. For every~$i\in\cn{1}{n}$, $\Gamma_{x} \cap (U_{i} \cup\{\psi(x)\})$ is a finite graph containing~$x$. By property~(\ref{i:pointx}), we have $\psi^{-1}(\psi(x))=\{x\}$ and, by property~(\ref{i:compiso}), the induced morphism $\psi^{-1}(U_{i}) \to U_{i}$ is a trivial cover. We deduce that $\psi^{-1}(\Gamma_{x} \cap (U_{i} \cup\{x\}))$ is a finite graph, hence $\psi^{-1}(\Gamma_{x})$ too. We will prove that the map $\bRc_{S}(\wc,\Fs)$ is locally constant on its complement.

Let~$U$ be a connected open subset of 
$Y_{x}\setminus \psi^{-1}(\Gamma_{x})$. It is contained in a connected component~$D$ of $Y_{x} \setminus\{x\}$. By property~(\ref{i:compx}), $D$ is an open disc. It contains no point of~$S$ and the point~$x$ lies at its boundary. By Lemma~\ref{lem:restrictionhorsGammaS}, case~(a), and Corollary~\ref{cor:continuousopendisc}, the map $\bRc_{S}(\wc,\Fs)$ is continuous on~$D$, hence on~$U$.

For every point~$y\in U$, the components of the $r$-tuple $\bRc_{S}(y,\Fs)$ are equal to some of the components of the $dr$-tuple 
$\bRc_{T}(\psi(y),\psi_{*}\Fs)$. Since $\bRc_{T}(\wc,\psi_{*}\Fs)$ is constant on~$\psi(U)$, there are only finitely many 
possible such values. We deduce that the restriction of the map $\bRc_{S}(\wc,\Fs)$ to~$U$ is continuous with values in a finite set, hence constant.
\end{proof}

\begin{remark}\label{rem:ccYx}
Every connected component of~$Y_{x}\setminus\{x\}$ is a connected component of~$X\setminus S$.
\end{remark}

Let~$x$ be a point of~$S$ of type~2. The affinoid domain~$Y_{x}$ of the previous corollary contains entirely almost every branch out of 
$x$. We still need to prove the finiteness property on the remaining branches. This is the object of the following proposition (where we also handle points of type~3).

\begin{proposition}\label{prop:finitebranch}
Let~$x$ be a point of~$S$ and~$b$ be a branch out of it. There exists an open annulus~$C_{x,b}$ that is a section of~$b$ and satisfies $\bar{C}_{x,b}\cap S =\{x\}$ as well as a finite subgraph~$\Gamma_{x,b}$ of~$\bar{C}_{x,b}$ such that the map $\bRc_{\emptyset}(\wc,\Fs_{|C_{x,b}})$ is locally constant outside $\Gamma_{x,b} \cap C_{x,b}$.
\end{proposition}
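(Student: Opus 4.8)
The plan is to distinguish according to the type of the point~$x$. If~$x$ has type~2, the idea is to re-use the étale morphism machinery of theorem~\ref{thm:bonvois}, but this time invoking property~a: choose $\psi : Y \to W$ so that~$\psi$ induces an isomorphism between a section~$C_{x,b}$ of the branch~$b$ and a section~$\psi(C_{x,b})$ of the branch~$\psi(b)$, with~$\psi(x)$ of type~2. Shrinking~$C_{x,b}$ if necessary, we may assume~$C_{x,b}$ is an open annulus (or disc) with $\bar{C}_{x,b}\cap S = \{x\}$ and that $\psi$ restricts to an isomorphism $C_{x,b}\xrightarrow{\sim}\psi(C_{x,b})$, the latter being an open sub-annulus of a disc inside~$\P^{1,\an}_{K}$. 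Then $(\Fs,\nabla)_{|C_{x,b}}$ is transported isomorphically to a connection on~$\psi(C_{x,b})$, which is an annulus embedded in the affine line; applying corollary~\ref{cor:locfiniteopendisc} (or directly theorem~\ref{thm:Andrea} after writing the open annulus as an increasing union of closed ones) to this embedded annulus produces a locally finite --- in fact, on the relevant relatively compact piece, finite --- subgraph of~$\psi(C_{x,b})$ outside which $\bRc_{\emptyset}(\cdot,\psi_*\text{(transported connection)})$ is locally constant. Pulling this graph back along the isomorphism~$\psi_{|C_{x,b}}$ gives the desired~$\Gamma_{x,b}$; the identification of the multiradii on the two sides is immediate because an isomorphism of curves carries the disc $D(\ti y, \emptyset)$ of definition~\ref{def:DxS} to the corresponding disc, so the (relative) radii are preserved.

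If~$x$ has type~3, then~$x$ has a single branch out of it within each connected component of~$X\setminus S$ adjacent to~$x$, and a neighbourhood of~$x$ is already an annulus. More precisely, $C$ itself is (a section of) an annulus, and it suffices to choose $C_{x,b}$ so small that $\bar C_{x,b}\cap S=\{x\}$ and $C_{x,b}$ embeds into the affine line (which an open annulus always does, via a coordinate); then the statement is again a direct consequence of theorem~\ref{thm:Andrea} applied to exhausting closed sub-annuli, exactly as in corollary~\ref{cor:locfiniteopendisc}. Note in this case no étale cover is needed: the point~$x$ lies on the skeleton of~$C$, so locally the situation is genuinely one-dimensional.

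The main obstacle I expect is bookkeeping rather than anything deep: one must ensure that the section~$C_{x,b}$ provided abstractly by theorem~\ref{thm:bonvois}, property~a, can be shrunk to an honest open annulus (not merely a ``section of a branch'') with closure meeting~$S$ only at~$x$, and simultaneously that~$\psi$ still restricts to an isomorphism on it and that~$\psi(C_{x,b})$ lands in an affinoid domain of~$\P^{1,\an}_K$ to which theorem~\ref{thm:Andrea} applies. This is where one uses that a section of a branch out of a point of type~2 contains, sufficiently close to~$x$, a virtual open annulus admitting~$x$ in its closure, together with the fact --- recalled in the excerpt --- that the preimage and image of a branch under a finite morphism are finite unions of branches, so removing finitely many ``bad'' branches from~$Y$ and~$W$ causes no loss. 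Once the geometric picture is pinned down, the finiteness statement transfers formally through the isomorphism, and the only remaining point is to observe that $\bRc_{\emptyset}$ is insensitive to which ambient curve we regard~$C_{x,b}$ as sitting in, since by remark~\ref{rem:defsimple} it depends only on the connected component of the complement of the (empty) skeleton.
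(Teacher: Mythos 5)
Your treatment of type~3 points is fine and matches the paper. But your type~2 argument has a genuine gap, and it sits exactly at the point the proposition is designed to address. Transporting $(\Fs,\nabla)_{|C_{x,b}}$ through the isomorphism $\psi_{|C_{x,b}}$ gives a connection on the \emph{open} annulus $\psi(C_{x,b})$ only; applying corollary~\ref{cor:locfiniteopendisc} (equivalently, exhausting by closed sub-annuli and invoking theorem~\ref{thm:Andrea} on each) then yields a subgraph that is locally finite in $C_{x,b}$ and finite on relatively compact pieces --- but $\bar{C}_{x,b}$, which contains the boundary point $x$, is not covered by such pieces. Nothing in your argument prevents the graph from accumulating at $x$, whereas the proposition demands a \emph{finite} subgraph of $\bar{C}_{x,b}$; that finiteness up to the end of the annulus adjacent to $x$ is precisely what is needed later to assemble a locally finite graph on $X$ near the triangulation point. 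You cannot repair this by pushing the connection to a closed annulus containing $\psi(x)$, because $\psi$ is not an isomorphism near $x$ (it has degree $d$ there), so the transported connection does not extend past the open annulus. This is also the content of the remark following the proposition: the coefficients converge on a neighbourhood of $\bar C$ but do not give analytic elements, so the naive one-dimensional argument on $C$ alone is insufficient.

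The paper's proof closes this gap by working with the push-forward of the \emph{whole} connection: one takes $\psi_*(\Fs,\nabla)$ on the affinoid $W$ (which contains $\psi(\bar{C}_{x,b})$, including $\psi(x)$), observes that $(\psi_0)_*((\Fs,\nabla)_{|C_{x,b}})$ is a direct factor of $\psi'_*((\Fs,\nabla)_{|P_{x,b}})$ where $P_{x,b}=\psi^{-1}(\psi(C_{x,b}))$, so that every component of $\bRc_{\emptyset}(y,(\Fs,\nabla)_{|C_{x,b}})$ appears among the components of $\bRc_{T}(\psi(y),\psi_*(\Fs,\nabla))$, and then applies theorem~\ref{thm:Andrea} to $\psi_*(\Fs,\nabla)$ on the affinoid $W$ to obtain a genuinely finite graph $\Gamma\subset W$; the continuity argument of proposition~\ref{prop:locfinite} then shows local constancy outside $\psi^{-1}(\Gamma)$, which is finite in $\bar{C}_{x,b}$. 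You should incorporate this push-forward step; the isomorphism on the section (condition~a of theorem~\ref{thm:bonvois}) is still needed, but only to guarantee that the radii of the restriction are read off from those of the push-forward, not to transport the problem entirely to the open annulus.
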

\begin{proof}
By~\cite[Th\'eor\`eme~4.3.5]{RSSen}, any point of type~3 has a neighbourhood that is isomorphic to a closed annulus. Hence, around such a point, we may conclude by Theorem~\ref{thm:Andrea}.

Let us now assume that~$x$ is a point of type~2. The proof will closely follow that of Proposition~\ref{prop:locfinite}. Let us consider a finite \'etale map $\psi \colon Y \to W$ as in Theorem~\ref{thm:bonvois} with $b_{1}=b$ (with no other~$b_{i}$'s, no~$c$ and $N=1$). Let~$C_{x,b}$ be an open annulus that is a section of~$b$, that satisfies $\bar{C}_{x,b}\cap S = \{x\}$, $\psi^{-1}(\psi(C_{x,b})) \subseteq X$ and such that the induced map $\psi_{0} \colon C_{x,b} \to \psi(C_{x,b})$ is an isomorphism. We may restrict~$W$ and~$Y$ by removing a finite number of connected components of~$W\setminus\{\psi(x)\}$ and~$Y\setminus\{x\}$ in order to assume that~$Y$ is an affinoid domain of~$X$ containing~$\psi^{-1}(\psi(C_{x,b})) \cup \{x\}$ (but not necessarily a neighbourhood of~$x$ anymore). Let us consider the push-forward $\psi_{*}(\Fs,\nabla)$ of the connection $(\Fs,\nabla)$ to~$W$. We endow~$W$ with a weak triangulation~$T$ such that $T\cap \psi(\bar{C}_{x,b}) = \partial \psi(C_{x,b}) = \psi(\partial C_{x,b})$.

Arguing as in the proof of Proposition~\ref{prop:locfinite}, we show that, for every~$z$ in $\psi(C_{x,b})$, we have
\[\bRc_{T}(z,\psi_{*}\Fs) = \bRc_{\emptyset}(z,\psi'_{*}(\Fs_{|P_{x,b}})),\]
where we denote by $\psi' \colon P_{x,b} = \psi^{-1}(\psi(C_{x,b})) \to \psi(C_{x,b})$ the induced morphism.

%Arguing as in the proof of Proposition~\ref{prop:locfinite}, we show that 
%\[\forall z \in \psi(C_{x,b}), \bRc_{T}(z,\psi_{*}\Fs) = \bRc_{\emptyset}(z,\psi'_{*}(\Fs_{|P_{x,b}})),\]
%where we denote by $\psi' \colon P_{x,b} = \psi^{-1}(\psi(C_{x,b})) \to \psi(C_{x,b})$ the induced morphism.

The subset~$P_{x,b}$ has several connected components, one of which is~$C_{x,b}$. We deduce that $(\psi_{0})_{*}(\Fs_{|C_{x,b}})$ is a direct factor of $\psi'_{*}(\Fs_{|P_{x,b}})$. Since~$\psi_{0}$ is an isomorphism, for every $y\in C_{x,b}$, every 
component of the multiradius $\bRc_{\emptyset}(y,\Fs_{|C_{x,b}})$ appears in $\bRc_{T}(\psi(y),\psi_{*}\Fs)$. 
By Theorem~\ref{thm:Andrea}, the map $\bRc_{T}(\wc,\psi_{*}\Fs)$ is locally constant outside a finite subgraph~$\Gamma$ of~$W$. Using an argument of continuity as in the last paragraph of the proof of Proposition~\ref{prop:locfinite}, we deduce that the map $\bRc_{\emptyset}(\wc,\Fs_{|C_{x,b}})$ is locally constant outside~$\psi^{-1}(\Gamma)$. Remark that $\psi^{-1}(\Gamma)$ is a finite subgraph of~$\bar{C}_{x,b}$.
\end{proof}

\begin{remark}
In the situation of Proposition~\ref{prop:finitebranch}, let~$C$ be an open annulus that is a section of~$b$. The coefficients of the matrix of the connection on~$C$ converge in a neighbourhood of~$\bar{C}$ in~$X$. If~$X$ were an affinoid domain of the affine line, we would deduce that these coefficients are analytic elements on~$C$ (in the sense of~\cite[\S8.5]{pde}) and then conclude by~\cite[Corollary~4.6.5]{finiteness}. Unfortunately, in the general case, such functions do not give rise to analytic elements.
\end{remark}

We may now conclude the proof of the finiteness property. By Lemma~\ref{lem:change}, we may enlarge the weak triangulation~$S$ into a triangulation in the sense of A.~Ducros, which means that every connected component of~$X\setminus S$ is relatively compact.

Let~$S_{[2]}$ (resp.~$S_{[3]}$) be the set of points of~$S$ that are of type~2 (resp.~3). To every~$x\in S_{[2]}$, we associate an affinoid domain~$Y_{x}$ of~$X$ containing~$x$ by Proposition~\ref{prop:locfinite}. Let $b_{x,1},\dotsc,b_{x,n(x)}$ be the branches out of~$x$ that do not belong to~$Y_{x}$. For every~$x\in S_{[3]}$, we set~$Y_{x}=\{x\}$ and denote by $b_{x,1},\dotsc,b_{x,n(x)}$ the branches out of~$x$ (hence $n(x)\le 2$).

To every~$x\in S$ and every~$i\in\cn{1}{n(x)}$, we associate an open annulus~$C_{x,b_{x,i}}$ by Proposition~\ref{prop:finitebranch}. We may shrink the annuli~$C_{x,b_{x,i}}$ in order to assume that they do not overlap. We now enlarge the triangulation~$S$ of~$X$ into a triangulation~$S'$ such that every annulus~$C_{x,b_{x,i}}$ is a connected component of~$X\setminus S'$. By Lemma~\ref{lem:change} again, this does not affect the result we want to prove. 

For every~$x\in S_{[2]}$, we have constructed a finite subgraph~$\Gamma_{x}$ of~$Y_{x}$ outside which the map $\bRc_{S}(\wc,\Fs)_{|Y_{x}}$, hence also the map $\bRc_{S'}(\wc,\Fs)_{|Y_{x}}$ is locally constant. Set 
\[\Gamma_{Y} = \bigcup_{x\in S_{[2]}} \Gamma_{x}.\] 
Every point of~$X$ has a neighbourhood that meets at most one of the~$Y_{x}$'s, hence~$\Gamma_{Y}$ is locally finite.

For every~$x\in S$ and every $i\in\cn{1}{n(x)}$, we have constructed a finite subgraph~$\Gamma_{x,b_{x,i}}$ of~$\bar C_{x,b_{x,i}}$ such that the map $\bRc_{\emptyset}(\wc ,\Fs_{|C_{x,b_{x,i}}}) = \bRc_{S'}(\wc ,\Fs)_{|C_{x,b_{x,i}}}$ (see Lemma~\ref{lem:restrictionGammaS}) is locally constant outside $\Gamma_{x,b_{x,i}} \cap C_{x,b_{x,i}}$. Set 
\[\Gamma_{C} = \bigcup_{\substack{x\in S\\ 1\le i\le n(x)}} \Gamma_{x,b_{x,i}}.\] 
Every point of~$X$ has a neighbourhood that meets only a finite number of the~$C_{x,b_{x,i}}$'s (and actually at most one for points in~$X\setminus S$), hence~$\Gamma_{C}$ is locally finite.

Let~$\Es$ be the set of connected components of~$X\setminus \bigcup_{x\in S} Y_{x}$. Let $E\in \Es$. By Remark~\ref{rem:ccYx}, it is a connected component of~$X\setminus S$. Assume that~$E$ is a disc. Since~$S$ is a triangulation (and not only a weak triangulation), there exists a point~$x\in S$ that lies at the boundary of~$E$. Let~$b$ be the branch out of~$x$ that is defined by~$E$. It belongs to no~$Y_{x'}$ with~$x'\in S_{[2]}$, hence we can consider the annulus~$C_{x,b}$. By assumption, the boundary point~$z_{x,b}$ of~$C_{x,b}$ inside~$E$ belongs to~$S'$. The complement~$Z_{E}$ of~$C_{x,b}$ in~$E$ is a closed disc with boundary point~$z_{x,b}$. In particular, $\{z_{x,b}\}$ is a triangulation of~$Z_{E}$. By Theorem~\ref{thm:Andrea} there exists a finite subgraph~$\Gamma_{E}$ of~$Z_{E}$ such that the restriction of the map~$\bRc_{S'}(\wc,\Fs)$ to~$Z_{E}$ is locally constant outside $\Gamma_{E}$. 

If~$E$ is an annulus, we argue in a similar way to define a closed subannulus~$Z_{E}$ of~$E$ whose complement in the union of two~$C_{x,b}$'s and a finite subgraph~$\Gamma_{E}$ of~$Z_{E}$ such that the restriction of the map~$\bRc_{S'}(\wc,\Fs)$ to~$Z_{E}$ is locally constant outside $\Gamma_{E}$. Set
\[\Gamma_{\Es} = \bigcup_{E\in\Es} \Gamma_{E}.\] 
Every point of~$X$ has a neighbourhood that meets at most one of the~$Z_{E}$'s, hence~$\Gamma_{\Es}$ is locally finite.

On the whole, the set $\Gamma = \Gamma_{Y} \cup \Gamma_{C} \cup \Gamma_{\Es}$ is a locally finite subgraph of~$X$. Moreover, since~$X$ is covered by the union of the~$Y_{x}$'s, the~$C_{x,b}$'s and the~$Z_{E}$'s, the map $\bRc_{S'}(\wc ,\Fs)$ is locally constant on~$X\setminus\Gamma$.

\subsection{Proof of continuity and log-linearity at points of the skeleton}

We will now prove the continuity and $\log$-linearity at points of the skeleton in a given direction. Let us begin with the case of points of type~3, which is easier.

\begin{lemma}\label{lem:continuoustype3}
Let~$x$ be a point of~$\Gamma_{S}$ of type~3 and let~$[x,y]$ be a segment with initial point~$x$. For every $i\in\cn{1}{\mathrm{rk}(\Fs_{x})}$, the restriction of the map $\Rc_{S,i}(\wc,\Fs)$ to the segment~$[x,y]$ is continuous at the point~$x$ and $\log$-linear in the neighbourhood of~$x$ with slopes of the form~$m/j$ with $m\in\Z$ and $j\in\cn{1}{\rk(\Fs_{x})}$.
\end{lemma}
\begin{proof}
By~\cite[Th\'eor\`eme~4.3.5]{RSSen}, the point~$x$ has a neighbourhood that is isomorphic to an annulus. If $\Gamma_{S} \cap [x,y] \ne \emptyset$, then there exists~$z\in (x,y]$ and a closed annulus~$C$ in~$X$ such that $[x,z] = \Gamma_{S} \cap C$ is the skeleton of~$C$. Since the radii only depend on the skeleton, we have $\Rc_{S,i}(\wc,\Fs)_{|C} = \Rc_{\{x,z\},i}(\wc,\Fs_{|C})$ and the results we want now follow from Theorem~\ref{thm:Andrea}.
 
If $\Gamma_{S} \cap [x,y] = \emptyset$, then there exists a closed disc~$D$ in~$X$ with boundary point~$x$ that contains~$y$ and such that $D\cap S = \{x\}$. We may then conclude as before.
\end{proof}

We will now turn to the case of points of type~2. Let us first state a result that relates the multiradius of convergence after push-forward by an \'etale map to the original multiradius of convergence. Recall that~$K$ is assumed to be algebraically closed.

\begin{lemma}
Let~$Z$ be a quasi-smooth $K$-analytic curve. Let $\psi \colon X \to Z$ be a finite morphism. Let $x\in X$ be a point of type~2 or~3. Assume that $d = [\Hs(x) \colon \Hs(\psi(x))]$ is prime to~$p$.\footnote{If~$\ti K$ has characteristic~0, then $p=1$ and this condition is always satisfied.} Let~$L$ be an algebraically closed complete valued extension of~$\Hs(x)$. Then every connected component of $\pi_{L}^{-1}(\psi(x))\setminus\{\psi(x)_{L}\}$ is a disc and the morphism~$\psi_{L}$ induces a trivial cover of degree~$d$ over it.
\end{lemma}
\begin{proof}
By Theorem~\ref{thm:structurefibre} (i), the point~$x_{L}$ has type~2 or~3. Moreover, by definition of~$x_{L}$ (see Definition~\ref{defi:universal}), the norm associated to~$x_{L}$ is induced by the tensor norm on $\Hs(x)\ho_{K} L$. Hence 
\[\sqrt{|\Hs(x_{L})^*|} = \sqrt{\|\Hs(x)\ho_{K} L \setminus\{0\}\|} = \sqrt{|L^*|},\] 
since~$L$ contains~$\Hs(x)$. We deduce that the point~$x_{L}$ has type~2. As a consequence, the point~$\psi_{L}(x_{L})$, where $\psi_{L} \colon X_{L} \to Z_{L}$ is the morphism induced by~$\psi$, also has type~2.

Recall also that, by Lemma~\ref{lem:universalmorphism}, we have $\psi_{L}(x_{L}) =\psi(x)_{L}$ and that, by Theorem~\ref{thm:structurefibre}, every connected component of $\pi_{L}^{-1}(\psi(x))\setminus\{\psi(x)_{L}\}$ is a disc.

Let~$\Cs$ and~$\Ds$ be the residual curves at~$x_{L}$ and~$\psi(x)_{L}$ respectively. Using the stability of the field~$\Hs(\psi(x)_{L})$ and arguing as in the proof of Theorem~\ref{thm:bonvois}, we show that the morphism $\ti{\psi}_{L} \colon \Cs \to \Ds$ induced by~$\psi$ has degree~$d$. Since~$d$ is prime to~$p$ and $\ti{L}$ is algebraically closed, $\ti{\psi}_{L}$ is generically \'etale, hence unramified at almost every closed point.  By~\cite[Th\'eor\`eme~4.3.13]{RSSen}, we deduce that~$\psi_{L}$ has degree~1 on almost every branch out of~$x_{L}$ and the result follows for almost every connected component of $\pi_{L}^{-1}(\psi(x))\setminus\{\psi(x)_{L}\}$ (see the proof of Theorem~\ref{thm:bonvois} for more details). 

It remains to show that the result holds for every connected component of $\pi_{L}^{-1}(\psi(x))\setminus\{\psi(x)_{L}\}$. Let~$C$ be one of them. If~$L$ is maximally complete, then by Corollary~\ref{cor:GaloisPoonen}, there exists $\sigma\in \mathrm{Gal}^c(L/K)$ that maps~$C$ onto a connected component over which~$\psi_{L}$ induces a trivial cover of degree~$d$ and the result follows.

Otherwise, we embed~$L$ into a field~$M$ that is algebraically closed and maximally complete. Let~$e$ be the number of connected components of~$\psi_{L}^{-1}(C)$. It is enough to prove that~$e=d$. Since~$L$ is algebraically closed, those connected components are geometrically connected (see~\cite[Th\'eor\`eme~7.11]{excellence}) and $\pi^{-1}_{M/L}(\psi_{L}^{-1}(C)) = \psi_{M}^{-1}(\pi^{-1}_{M/L}(C))$ still has~$e$ connected components.

Finally, remark that $\pi^{-1}_{M/L}(C)$ is a connected component of $\pi_{M}^{-1}(\psi(x))\setminus\{\psi(x)_{M}\}$, hence $\psi_{M}$ induces a trivial cover of degree~$d$ over it, by the previous arguments. Hence~$e=d$.
\end{proof}

\begin{lemma}\label{lem:radiusetale}
Let~$Z$ be a quasi-smooth $K$-analytic curve endowed with a weak triangulation~$T$. Let $\psi \colon X \to Z$ be a finite \'etale morphism. Let $x\in \Gamma_{S} \cap \psi^{-1}(\Gamma_{T})$. Assume that the degree $d = [\Hs(x)\colon\Hs(\psi(x))]$ is prime to~$p$. Then, for every $i\in \cn{1}{\mathrm{rk}(\Fs_{x})}$ and $j\in \cn{1}{d}$, we have 
\[\Rc_{T,d(i-1)+j}(\psi(x),\psi_{*}(\Fs,\nabla)) = \Rc_{S,i}(x,(\Fs,\nabla)).\]
\end{lemma}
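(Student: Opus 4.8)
The statement compares the convergence radii of $(\Fs,\nabla)$ at a point $x$ lying on the skeleton with those of the push-forward $\psi_*(\Fs,\nabla)$ at $\psi(x)$. The key geometric input is that, since $x\in\Gamma_S\cap\psi^{-1}(\Gamma_T)$ and $x$ has type~2, the disc $D(\ti x,S_L)$ and its image relate in a controlled way. I would first reduce to the case where $K$ is algebraically closed and maximally complete and pick an $L$-rational (here $K$-rational) point $\ti x$ over $x$; by Lemma~\ref{lem:basechange} this is harmless. The crucial observation is that the residue field extension $\widetilde{\Hs(x)}/\widetilde{\Hs(\psi(x))}$ has degree $d=[\Hs(x):\Hs(\psi(x))]$ (so $\psi$ is ``residually'' as large as possible, the point $x$ being of type~2 with no defect since we are over an algebraically closed maximally complete field), and that $d$ is prime to $p$. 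This forces the branches: there is a single branch $b$ out of $x$ mapping to the branch $\psi(b)$ out of $\psi(x)$ along which $[y,x)$ runs, and along it $\psi$ is, up to scaling, a $d$-th power map. Concretely, one identifies $D(\ti x, S_K)\simeq D_K^-(0,1)$ and the corresponding disc at $\psi(x)$ with $D_K^-(0,1)$ in such a way that $\psi$ is given by $t\mapsto t^d$ (tamely ramified of degree $d$ prime to $p$, as in Theorem~\ref{thm:bonvois}b applied at the residual point corresponding to $b$).

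\textbf{Key steps.} With $\psi$ locally the map $t\mapsto t^d$ on $D_K^-(0,1)$, the push-forward of a connection is computed by the standard Frobenius-type (Dwork/Christol–Mebkhout, or rather Young/Pulita for tame $d$) formula: if $(\ti\Fs,\ti\nabla)$ on $D_K^-(0,1)$ has multiradius with the $i$-th slope $\rho_i$, then $\psi_*(\ti\Fs,\ti\nabla)$ on $D_K^-(0,1)$ (source coordinate $t$, target coordinate $u=t^d$) has as its multiradius, sorted, the $dr$-tuple obtained by taking each $\rho_i$ with multiplicity $d$ but \emph{measured in the target disc}. Because $d$ is prime to $p$, the map $t\mapsto t^d$ is an isometry-up-to-scaling on sub-discs centred at $0$ and, more to the point, on small discs $D(\ti y, \cdot)$ for $\ti y$ on the skeleton it sends a sub-disc of relative radius $\rho$ to a sub-disc of relative radius $\rho$ again (no distortion of the relative radius along the skeleton, since $|u|=|t|^d$ and the ambient disc also gets its radius $d$-th-powered, so the ratio is preserved). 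Hence $\Rc_{T,\bullet}(\psi(x),\psi_*(\Fs,\nabla))$, after sorting in decreasing order, consists exactly of the $\Rc_{S,i}(x,(\Fs,\nabla))$ each repeated $d$ times; re-indexing gives $\Rc_{T,d(i-1)+j}=\Rc_{S,i}$ for $j\in\cn{1}{d}$.

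\textbf{Implementation via the affine line.} Rather than redo the push-forward formula abstractly, I would invoke Theorem~\ref{thm:bonvois}b to get, in a section of $b$, a model where both source and target sit inside $\P^{1,\an}_K$ with $\psi$ given by $t\mapsto t^d$, then reduce to the statement of~\cite[proposition~5.5]{finiteness} (or the analogous computation there) for the $d$-th power map between annuli/discs in the affine line, which already records that the radii of the push-forward are those of the source counted with multiplicity $d$, in the tame case. The point $x$ being on $\Gamma_S$ and $\psi(x)$ on $\Gamma_T$ guarantees the relevant relative normalisations $\rho_S$, $\rho_T$ are both equal to $1$ at these points, so the embedded radii and the relative radii agree there (formula~(\ref{eq:radiusembS})). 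Then equation~(\ref{eq:radiusembS}) on both sides, combined with $|u|=|t|^d$ for the coordinate relation and the fact that $\psi$ multiplies the ambient disc's radius by the $d$-th power, yields the claimed equality of the normalised radii.

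\textbf{Main obstacle.} The delicate point is the bookkeeping of normalisations: one must be careful that the radius on the source is measured relative to $D(\ti x,S_K)$ while on the target it is relative to $D(\psi(\ti x),T_K)$, and that under $t\mapsto t^d$ these two fixed reference discs correspond (so that no extra factor appears). This hinges precisely on $x\in\Gamma_S$ and $\psi(x)\in\Gamma_T$ — which makes both reference discs of relative radius $1$ and removes any ambiguity — together with the tameness hypothesis $p\nmid d$, which is what makes $\psi$ unramified in the relevant sense and prevents the $d$-th power map from contracting small discs near the skeleton by more than the expected factor. The other thing to verify carefully is that the list of $dr$ radii of the push-forward is genuinely $\{\Rc_{S,i}(x,\cdot)\text{ with multiplicity }d\}$ and not some coarser statement (e.g.\ only an inequality): this is where one needs the full strength of the tame push-forward computation, valid because $d$ is prime to $p$, rather than merely the inclusion ``every component of $\bRc_S(x,\cdot)$ appears in $\bRc_T(\psi(x),\cdot)$'' used in Propositions~\ref{prop:locfinite} and~\ref{prop:finitebranch}.
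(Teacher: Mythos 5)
There is a genuine gap. Your strategy rests on putting the given~$\psi$ in the normal form $t\mapsto t^d$ near~$x$ and then invoking a ``tame push-forward formula'' for that map, and neither step is available. First, $\psi$ is \emph{given} in the lemma; theorem~\ref{thm:bonvois}, part~b), constructs an auxiliary map with prescribed properties, it does not normalise an arbitrary finite \'etale~$\psi$ of tame degree~$d$, and in general such a~$\psi$ is not a Kummer cover near a type-2 point: its residual morphism $\Cs\to\Ds$ can be any separable degree-$d$ morphism of curves, not just a $d$-th power map. Second, the formula you want to quote --- that pushing forward along $t\mapsto t^d$ repeats each radius $d$~times --- is not \cite[proposition~5.5]{finiteness}, which only treats direct sums; a genuine push-forward computation at the totally ramified point is essentially equivalent to the lemma itself, so within this paper invoking it is circular. (Also, points of $\Gamma_S$ may be of type~3, and a rational point over~$x$ only exists after base change to some $L\supseteq\Hs(x)$, so your reduction ``pick a $K$-rational point $\ti x$ over $x$'' needs that base change.)

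More importantly, the geometric mechanism is not the one you describe. The multiradius at~$\psi(x)$ is computed on $D(\ti y,T_L)$ for an $L$-rational point~$\ti y$ over~$\psi(x)$, i.e.\ on a \emph{generic} residue disc of~$\psi(x)_L$ after extending scalars to an algebraically closed $L\supseteq\Hs(x)$ (which makes $x_L$ of type~2). Since $p\nmid d$ the residual morphism is generically \'etale, so over almost every such residue disc~$W$ the cover splits into $d$~discs $V_1,\dots,V_d$, each mapping \emph{isomorphically} onto~$W$: there is no $d$-th power map and no rescaling in the direction in which radii are measured (the $|u|=|t|^d$ distortion you discuss lives along the skeleton, transverse to the residue discs; and $D(\ti x_j,S_L)=V_j$, $D(\ti y,T_L)=W$ precisely because $x\in\Gamma_S$ and $\psi(x)\in\Gamma_T$, which settles the normalisation issue trivially). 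The push-forward over~$W$ is then the direct sum of the $(\psi_{|V_j})_*$, its multiradius at~$\ti y$ is the sorted concatenation of the multiradii at the $d$~preimages $\ti x_1,\dots,\ti x_d$, and --- this is the point your argument misses --- all $d$ of these tuples are \emph{equal} to $\bRc_S(x,(\Fs,\nabla))$ because every $\ti x_j$ projects to the same point~$x$ of~$X$ and the multiradius is invariant under base change (lemma~\ref{lem:basechange}). That is what yields each radius with multiplicity exactly~$d$, with no ramified push-forward formula needed.
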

\begin{proof}
Since~$x\in \Gamma_{S}$ and $\psi(x)\in \Gamma_{T}$, the radii of convergence at those points only depend on the restrictions of the connections to those points (see Remark~\ref{rem:defsimple}). Let~$L$ be an algebraically closed complete valued extension of~$\Hs(x)$. By the previous lemma, every connected component of $\pi_{L}^{-1}(\psi(x))\setminus\{\psi(x)_{L}\}$ is a disc over which the morphism~$\psi_{L}$ induces a trivial cover of degree~$d$. 

Let~$W$ be such a connected component and~$\ti y$ be an $L$-rational point of it. Let us denote by $\psi_{L}^{-1}(\ti y) = \{\ti{x}_{1},\dots,\ti{x}_{d}\}$. Arguing as in the proof of Proposition~\ref{prop:locfinite}, we show that $\bRc_{T_{L}}(\ti{y},{\psi_{L}}_{*}\pi_{L}^*\Fs)$ is obtained by concatenating and reordering the tuples $\bRc_{S_{L}}(\ti{x}_{1},\pi_{L}^*\Fs), \dots, \bRc_{S_{L}}(\ti{x}_{d},\pi_{L}^*\Fs)$.

We have $\pi_{L}(\ti y) = \psi(x)$ and, for any $i \in \cn{1}{d}$, $\pi_{L}(\ti{x}_{i}) = x$. From Lemma~\ref{lem:basechange}, we deduce that $\bRc_{T_{L}}(\ti{y},{\psi_{L}}_{*}\pi_{L}^*\Fs) = \bRc_{T}(\psi(x),\psi_{*}\Fs)$ and, for any \mbox{$i \in \cn{1}{d}$}, $\bRc_{S_{L}}(\ti{x}_{i},\pi_{L}^*\Fs) = \bRc_{S_{L}}(x,\Fs)$. The result follows.
\end{proof}

\begin{proposition}\label{prop:continuoustype2}
Let~$x$ be a point of~$\Gamma_{S}$ of type~2 and let~$[x,y]$ be a segment with initial point~$x$. For every $i\in\cn{1}{\mathrm{rk}(\Fs_{x})}$, the restriction of the map $\Rc_{S,i}(\wc,\Fs)$ to the segment~$[x,y]$ is continuous at the point~$x$ and $\log$-linear in the neighbourhood of~$x$ with slopes of the form~$m/j$ with $m\in\Z$ and $j\in\cn{1}{\mathrm{rk}(\Fs_{x})}$.
\end{proposition}
\begin{proof}
Up to shrinking~$[x,y]$, we may assume that the open interval~$(x,y)$ is the skeleton~$\Gamma_{C}$ of an open annulus~$C$. Let~$c$ be the branch out of~$x$ associated to~$C$.

Let us consider a finite \'etale map $\psi \colon Y \to W$ that satisfies the conclusions of Theorem~\ref{thm:bonvois} with~$c=c$ and~$N=p$ (and no~$b_{i}$'s). In particular, we have~$\psi^{-1}(\psi(c)) = \{c\}$ and the degree~$d$ of~$\psi$ is prime to~$p$. Replacing~$C$ by a sub-annulus, we may assume that $C \subseteq Z$, that $S\cap C = \emptyset$ and that, for every~$z\in \Gamma_{C}$, $\psi^{-1}(\psi(z)) = \{z\}$, hence $[\Hs(z)\colon\Hs(\psi(z))] = d$, by Lemma~\ref{lem:degree}. 

Since $\psi^{-1}(\psi(C)) = C$ and $\psi^{-1}(\psi(x))=\{x\}$, we may restrict~$W$ and~$Y$ by removing a finite number of connected components of~$W\setminus\{\psi(x)\}$ and~$Y\setminus\{x\}$ in order to assume that~$Y$ is an affinoid domain of~$X$ containing~$C\cup\{x\}$ (but not necessarily a neighbourhood of~$x$ anymore). Let us endow~$W$ with a triangulation~$T$ whose skeleton contains~$\psi(\Gamma_C)$ and~$\psi(x)$.

%Up to shrinking~$[x,y]$, we may assume that the open interval~$(x,y)$ is the skeleton~$\Gamma_{C}$ of an open annulus~$C$. Let~$c$ be the branch out of~$x$ associated to~$C$.
%
%Let us consider a finite \'etale map $\psi \colon Y \to W$ that satisfies the conclusions of Theorem~\ref{thm:bonvois} with~$c=c$ and~$N=p$ (and no~$b_{i}$'s). In particular, we have~$\psi^{-1}(\psi(c)) = \{c\}$ and the degree~$d$ of~$\psi$ is prime to~$p$. Replacing~$C$ by a sub-annulus, we may assume that $S\cap C = \emptyset$ and that, for every~$z\in \Gamma_{C}$, $\psi^{-1}(\psi(z)) = \{z\}$, hence $[\Hs(z)\colon\Hs(\psi(z))] = d$, by Lemma~\ref{lem:degree}. Let us endow~$W$ with a triangulation~$T$ whose skeleton contains~$\psi(\Gamma_C)$ and~$\psi(x)$.

By Lemma~\ref{lem:radiusetale}, for any $i\in \cn{1}{\mathrm{rk}(\Fs_{x})}$, we have 
\[\Rc_{S,i}(x,\Fs) = \Rc_{T,di}(\psi(x),\psi_{*}\Fs).\]

We will now distinguish two cases. Let us first assume that $\Gamma_{C}\cap \Gamma_{S} \ne \emptyset$. By Lemma~\ref{lem:restrictionGammaS} and Lemma~\ref{lem:radiusetale}, for any $z\in \Gamma_{C}$ and any $i\in \cn{1}{\mathrm{rk}(\Fs_{x})}$, we have 
\[\Rc_{S,i}(z,\Fs) = \Rc_{T,di}(\psi(z),\psi_{*}\Fs).\]
By Theorem~\ref{thm:Andrea}, the map $\Rc_{T,di}(\wc,\psi_{*}\Fs)$ is continuous on~$W$ and piecewise $\log$-linear on every segment inside it with slopes as requested. The result follows.

Let us now assume that $\Gamma_{C}\cap \Gamma_{S} = \emptyset$. By Lemma~\ref{lem:radiusetale} and case~(b) of Lemma~\ref{lem:restrictionhorsGammaS}, whose notations we borrow, for any $i\in \cn{1}{\mathrm{rk}(\Fs_{x})}$ and any $R\in (R_{1},R_{2})$, we have 
\[\Rc_{T,di}(\psi(x_{0,R}),\psi_{*}\Fs) = \Rc_{\emptyset,i}(x_{0,R},\Fs_{|C}) = \min\left(\frac{R_{2}}{R}\, \Rc_{S,i}(x_{0,R},\Fs),1\right).\]
If $\Rc_{T,di}(\psi(x_{0,R}),\psi_{*}\Fs) < 1$ for all~$R$ close enough to~$R_{2}$, then we have
\[\Rc_{T,di}(\psi(x_{0,R}),\psi_{*}\Fs) = \frac{R_{2}}{R}\, \Rc_{S,i}(x_{0,R},\Fs)\]
and we conclude as before. Otherwise, the set of~$R$'s such that $\Rc_{T,di}(\psi(x_{0,R}),\psi_{*}\Fs) = 1$ contains~$R_{2}$ in its closure. There exists $R'\in (R_{1},R_{2})$ such that $\Rc_{T,di}(\wc,\psi_{*}\Fs)$ is $\log$-linear on $[\psi(x_{0,R'}),\psi(x)]$, which implies that we actually have $\Rc_{T,di}(\psi(x_{0,R}),\psi_{*}\Fs) = 1$ for all~$R$ close enough to~$R_{2}$. The result then follows from the last statement of Lemma~\ref{lem:restrictionhorsGammaS} using the fact that
\[\Rc_{S,i}(x,\Fs) = \Rc_{T,di}(\psi(x),\psi_{*}\Fs) = 1.\]
\end{proof}

\subsection{Conclusion of the proof}

In \S\ref{sec:prooffinite}, we proved that there exists a locally finite subgraph~$\Gamma$ of~$X$ outside which the map $\bRc_{S}$ is locally constant. As a consequence, to prove that $\bRc_{S}$ is continuous, it is now enough to prove that it is continuous at the initial point~$x$ of every closed interval~$[x,y]$ inside~$X$. If~$x$ belongs to~$\Gamma_{S}$, we did it in Lemma~\ref{lem:continuoustype3} and Proposition~\ref{prop:continuoustype2}. Otherwise, $x$ belongs to an open disc and we may use Corollary~\ref{cor:continuousopendisc}. This proves property~(i) of Theorem~\ref{thm:continuousandfinite}.

Properties~(ii) and~(iii), which deals with piecewise $\log$-linearity and the form of the slopes, are dealt with in the same way.

Finally, remark that it is possible to enlarge~$\Gamma$ into a locally finite subgraph~$\Gamma'$ of~$X$ such that $X\setminus\Gamma'$ is a disjoint union of relatively compact open discs. In this case, by Remark~\ref{rem:retraction}, there exists a natural continuous retraction $X \to \Gamma'$. The map $\bRc_{S}$ factorises by it and Theorem~\ref{thm:continuousandfinite} is now proved.

\bigbreak

\noindent \textbf{Acknowledgements:} We thank 
A.~Ducros, F.~Baldassarri and K.~Kedlaya for useful 
conversations. We would like to thank the referees for 
having read this paper very carefully and provided many 
interesting comments. The present proof of 
Proposition~\ref{prop:opendisc} was suggested by one 
of them and another prompted us to add piecewise 
$\log$-linearity to the statement of our main 
theorem~\ref{thm:continuousandfinite}. We are 
especially grateful to them for this. 

%\nocite{}
%\bibliographystyle{alpha}
%\bibliography{biblio}

\end{document}